\newcommand{\TheTitle}{Dual virtual element method for discrete fractures networks}
\newcommand{\TheAuthors}{A. Fumagalli, E. Keilegavlen}
\headers{\TheTitle}{\TheAuthors}
\title{{\TheTitle}\thanks{Submitted to the editors DATE.
\funding{This work was partially funded by the ANIGMA project from the Research
Council of Norway (project no. 244129/E20) through the ENERGIX program.}}}
\author{
    Alessio Fumagalli\thanks{Department of Mathematics,
    University of Bergen, Bergen, Norway (\email{alessio.fumagalli@uib.no},
    \email{eirik.keilegavlen@uib.no}).}
    \and
    Eirik Keilegavlen\footnotemark[2]
}
\newcommand{\defeq}{\vcentcolon=}
\newcommand{\jump}[2]{\left\llbracket {#1} \right\rrbracket_{#2}}
\newcommand{\ie}{\textit{i.e.~}}
\newcommand{\eg}{\textit{e.g.~}}
\newcommand{\dof}{\textit{dof}}
\newcommand{\dofs}{\textit{d.o.f.}'s~}
\newcommand{\abs}[1]{\left| #1 \right|}
\newcommand{\norm}[1]{\Vert #1 \Vert}
\newtheorem{problem}{Problem}
\newtheorem{remark}{Remark}
\DeclareMathOperator*{\base}{base}
\newcommand{\FIX}[2]{#2}
\begin{document}
\maketitle




\begin{abstract}
    Discrete fracture networks is a key ingredient in the simulation of physical
    processes which involve fluid flow in the underground, when the surrounding
    rock matrix is considered impervious.  In this paper we present two
    different models to compute the pressure field and Darcy velocity in the
    system. The first allows a normal flow out of a fracture at the
    intersections, while the second grants also a tangential flow along the
    intersections.  For the numerical discretization, we use the mixed virtual
    \FIX{finite}{} element method as it is known to handle grid elements of, almost, any
    arbitrary shape. The flexibility of the discretization allows us to loosen the requirements
    on grid construction, and thus significantly simplify the flow discretization
    compared to traditional discrete fracture network models.
    A coarsening algorithm, from the algebraic multigrid
    literature, is also considered to further speed up the computation.
    The performance of the method is validated by numerical experiments.
\end{abstract}


\begin{keywords}
    Porous media, Discrete fracture network, Interface model, Virtual element method.
\end{keywords}


\begin{AMS}
    76S05, 65N08, 65N30
\end{AMS}





\section{Introduction}


In many hard rocks fractures, naturally occurring or engineered, are of a
paramount importance to understand and simulate flow paths.  The construction of
efficient simulation models for flow in fracture networks is therefore of
relevance for applications such as energy recovery and storage, waste disposal
(nuclear and CO$_2$), just to name a few
\cite{bear1993flow,Karimi-Fard2006,Hui2008}.

Flow in the fracture planes is usually modeled by Darcy's law
\cite{Martin2005,Karimi-Fard2006}, or Forchheimer's law if the Reynold number is
sufficiently high \cite{Frih2008,Knabner2014}, and it may also be of crucial
interest to include flow along intersections in the models
\cite{Xiang2010,Formaggia2012,Fumagalli2012g}.
Because of hydraulic aperture, which is several order of magnitude smaller than
other characteristic sizes of the problem, from a modeling perspective a
fracture may be represented as a two-dimensional plane (generally manifold)
\cite{Alboin2000a,Martin2005,Amir2005}, embedded in a three-dimensional domain.
The set of, possibly intersecting, fractures forms the network where the flow
may take place and, usually, is referred as a discrete fracture network (DFN).
Depending on how the fractures were created, they may intersect in an arbitrary
manner, leading to highly complex simulation geometries.  The flow properties of
a fracture are determined by its geo-mechanical and geo-chemical history
\cite{faybishenko2000dynamics}, and there can be significant heterogeneities as
well as permeability anisotropy in the fracture plane. Infilling processes or
geological movements may alter also the composition and orientation of the
material presents in the intersections, leaving the latter as a privileged
patterns or obstacles for the flow.

One of the main challenge in DFN simulations in complex fracture geometries is
the construction of the computational mesh, combined with the subsequent
discretization of the flow equations.  Depending on the properties of the chosen
numerical method, this may put significant constraints on the mesh algorithm, in
particular in meshing of fracture intersections
\cite{Mustapha2007,Hyman2014,Hyman2015}.  Compared to two-dimensional gridding
problems, which by themselves can be non-trivial, gridding of DFN problems is
complicated by the requirement, commonly made, that the grids in different
fracture planes are mutually conforming.  That is, hanging nodes are not allowed
in fracture intersections.  The gridding can be simplified by allowing for a
relaxed interpretation of the fracture planes \cite{Karimi-Fard2016}, in effect
locally assigning a curvature to the fracture plane.

If this requirement is loosened, and some of the burden is transferred to an
appropriately chosen numerical method, the gridding problem becomes
significantly simpler and standard algorithms may be applied to create the
global mesh.  For instance, the extended finite element method handles
discontinuities internal to elements by enriching the approximation space
\cite{Hansbo2002,DAngelo2011,Fumagalli2012g}.  This removes all requirements on
the mesh conforming to fracture intersections, and thus significantly simplifies
the gridding \cite{Fumagalli2012g,Berrone2013a,Flemisch2016,Pieraccini2016}.

A reasonable compromise between burdening mesh generation and discretization is
to require that meshes conform to fracture intersections, but also allow for
hanging nodes. This approach allows for the independent construction of a set of
bi-dimensional meshes in the fracture planes, which is considerably less
difficult than gridding the whole DFN model fully coupled. See \cref{fig:real}
and \cref{fig:real_zoom} as an example.
\begin{figure}[tbp]
    \centering
    \includegraphics[width=0.5\textwidth]{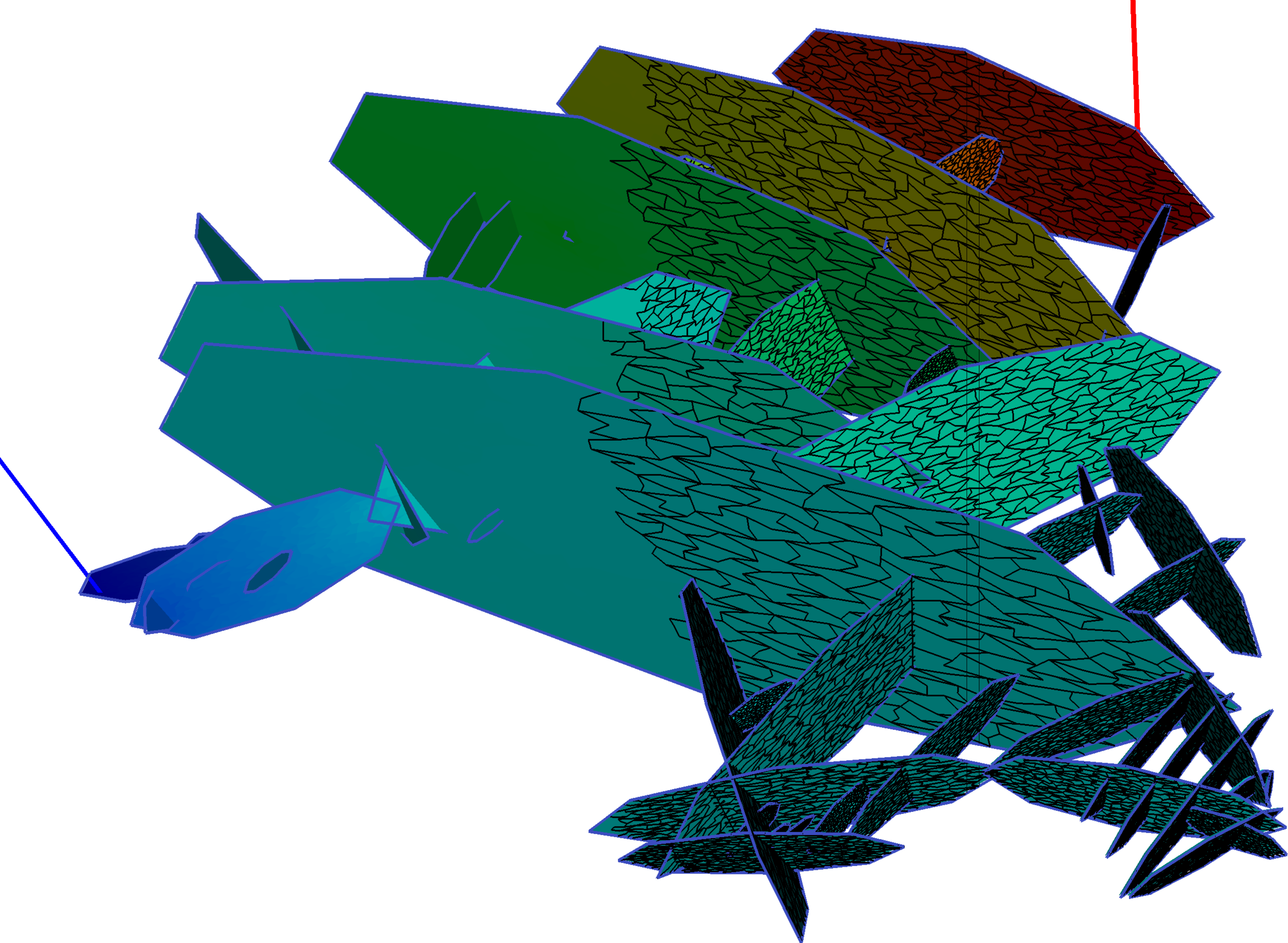}%
    \caption{Example of pressure field in a realistic DFN with several
    intersecting fractures, represented as ellipses. The fractures are
    represented by one co-dimensional objects. The blue line represents an
    injection well, while the red line a production well. Note that the
    represented mesh is \FIX{non}{} conforming at the intersections. A detailed
    representation of the grid is reported in \cref{fig:real_zoom}.}%
    \label{fig:real}
\end{figure}
\begin{figure}[tbp]
    \centering
    \includegraphics[width=0.475\textwidth]{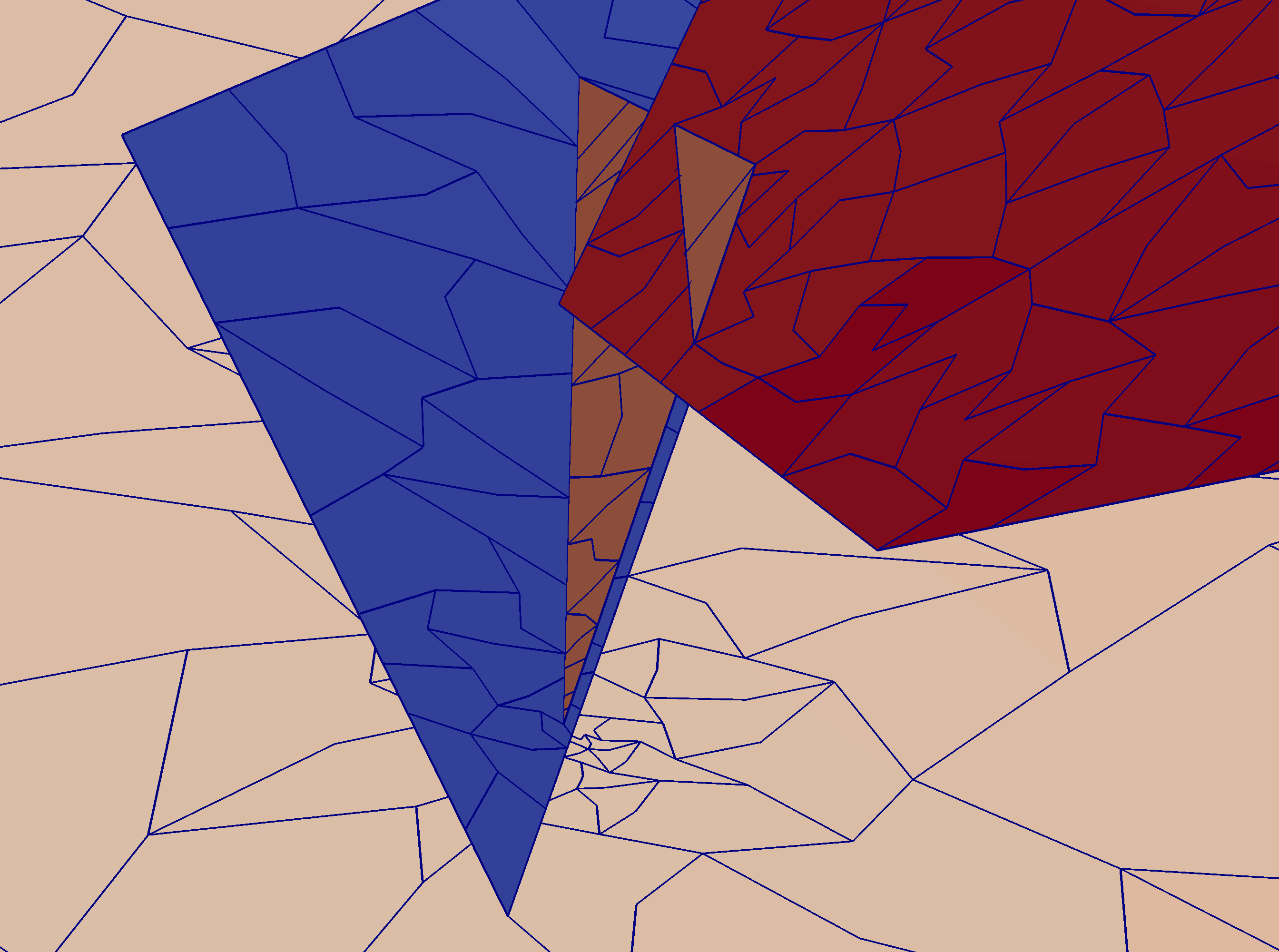}%
    \hfill%
    \includegraphics[width=0.475\textwidth]{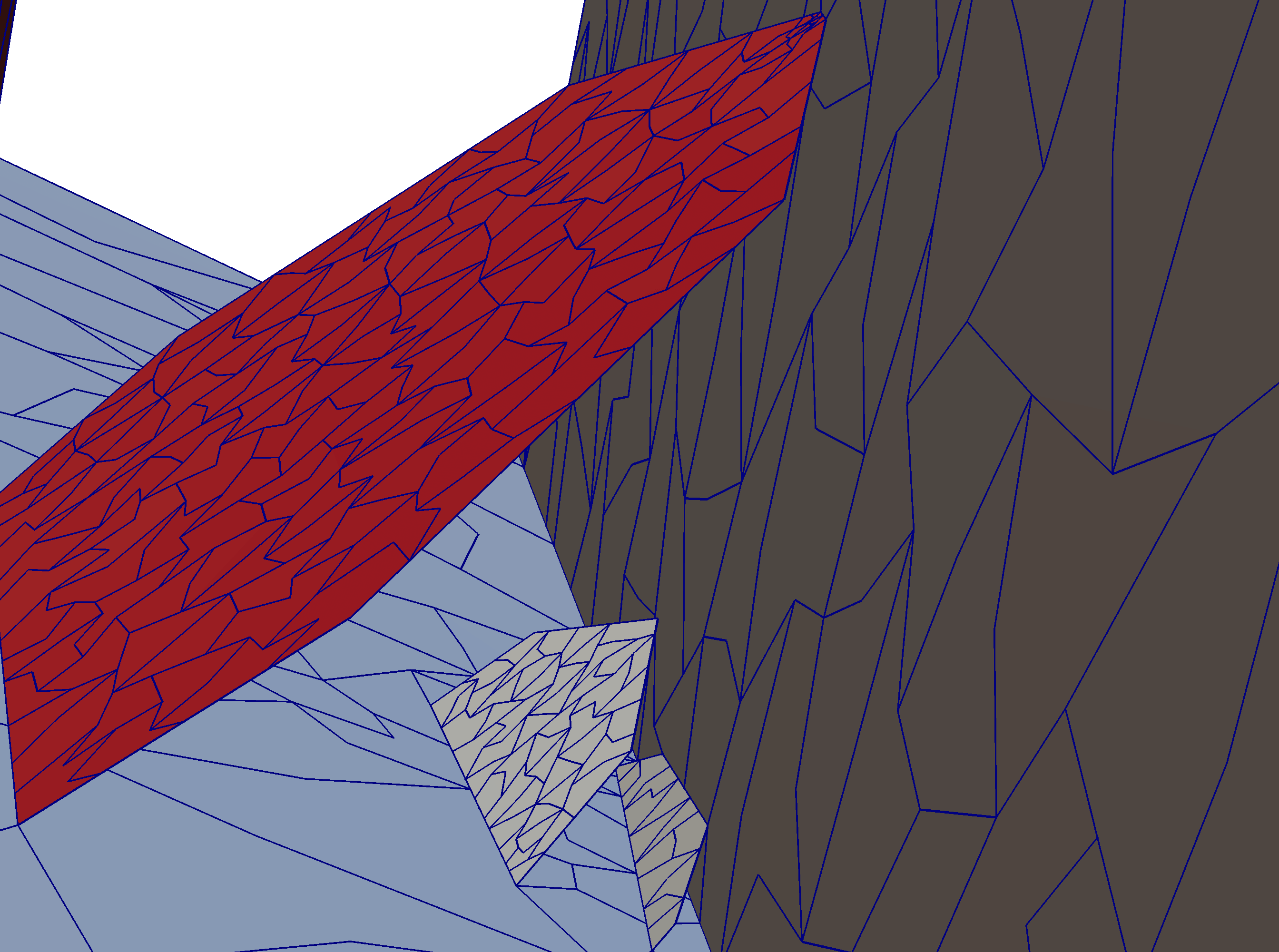}
    \includegraphics[width=0.475\textwidth]{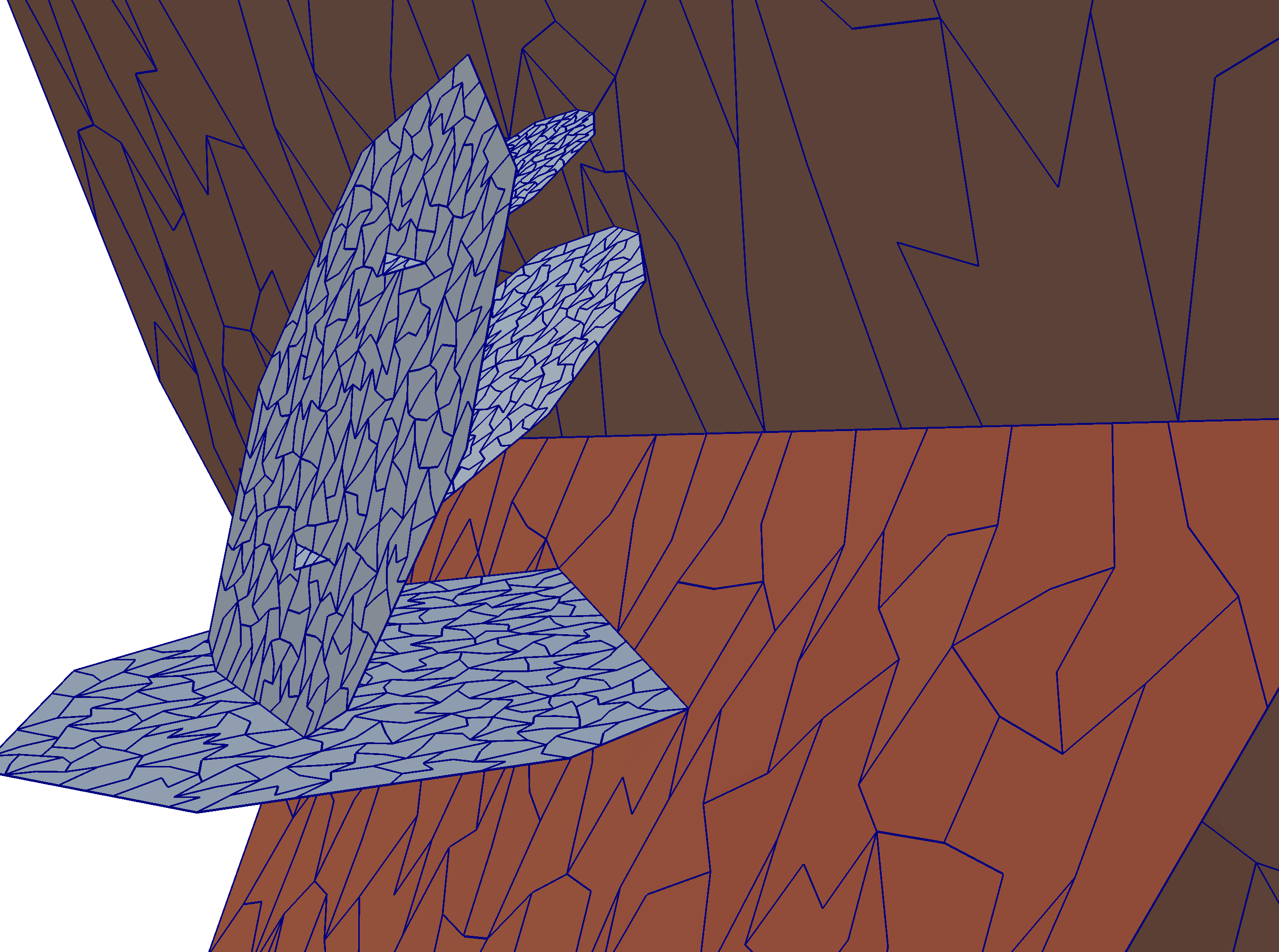}%
    \hfill%
    \includegraphics[width=0.475\textwidth]{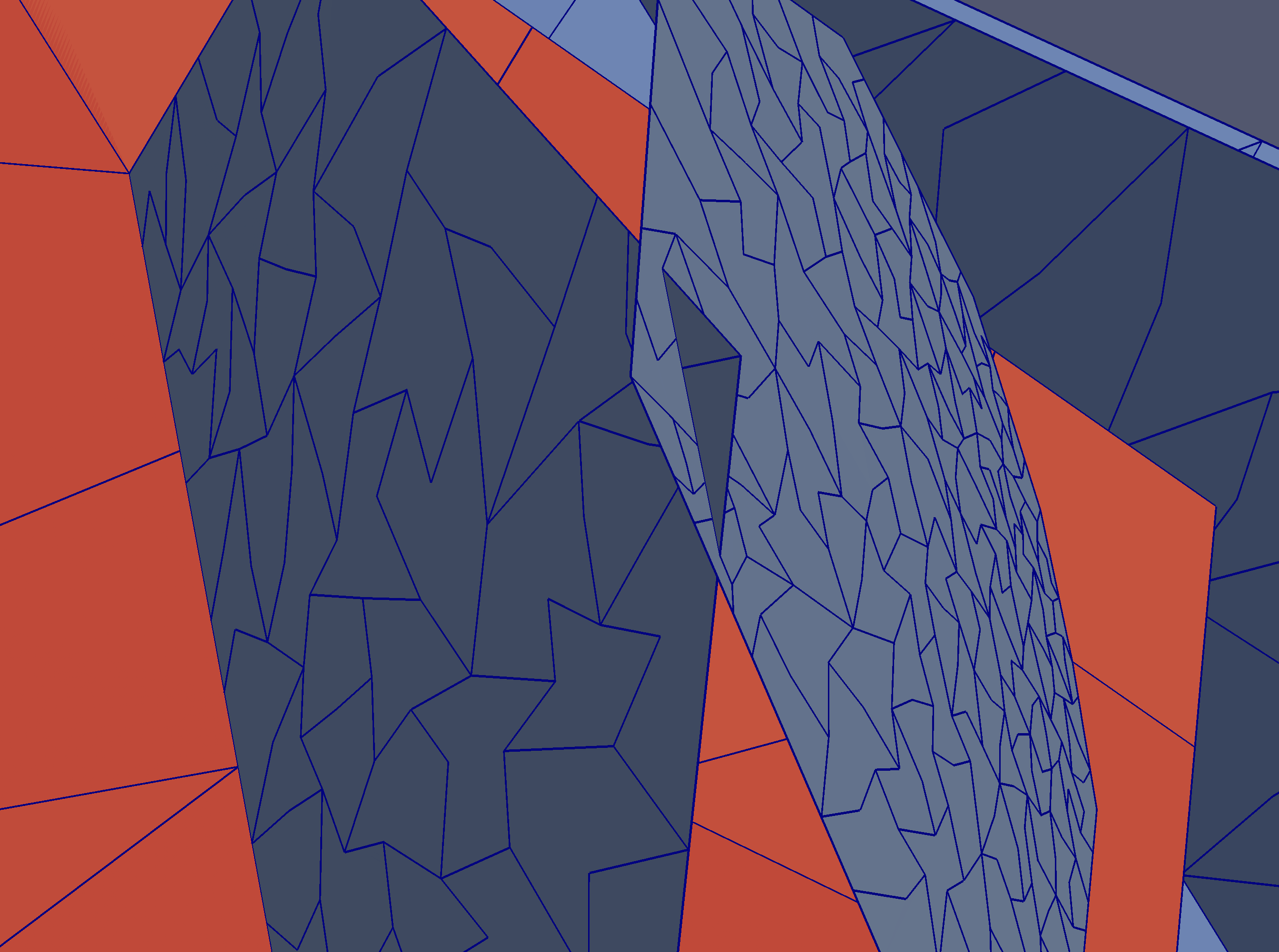}%
    \caption{Four different details of the grid for the example represented in
    \cref{fig:real}. The method considered is able to handle these
    intersection among fractures as conforming.}%
    \label{fig:real_zoom}
\end{figure}
To further increase
the applicability of discretization scheme, it should then allow for general
polygonal cells, including hanging nodes.  In particular, we note that the
virtual element method (VEM) has successfully been applied in the DFN setting
\cite{Benedetto2014,Benedetto2016}. See
\cite{BeiraodaVeiga2013,BeiraodaVeiga2014a,Brezzi2014,BeiraodaVeiga2014b,Antonietti2014a,BeiraoVeiga2016} for a
presentation of VEM in various contexts.

In this work, we consider different models to describe the single-phase flow in
the network of fractures, with and without the tangential contribution of the
intersections included. A reduced model is thus provided, with suitable normal and
tangential effective permeabilities, to describe the flow in a two-codimensional
framework. We consider a dual formulation of the generalized Darcy problem where
the virtual element method is extend to handle the proposed modes. Local mass
conservation is thus guaranteed and the normal flux, from each edge in the mesh,
may be directly used to simulate the heat or tracer transport in the DFN.  A
gridding strategy is also presented to generate a conforming mesh, in the sense
that the edges may be split to ensure conformity along the intersections of the
fractures.  The proposed scheme is thus a flexible and robust tool to perform
simulations in this context. We present several numerical tests to validate the
approach, focusing the study on the order of convergence for the pressure and
the velocity in different scenarios.

The paper is organized as follows. The mathematical models and their analyses to
describe the fluid flow in the DFN are presented in
\cref{sec:continuous}.\FIX{}{The \cref{sec:well_posedness} is devoted to
prove the well posedness of the continuous models considered.}
In \cref{sec:discrete} we review the implementation of the
mixed virtual element method, and its implementation in a DFN setting.
The gridding and coarsening strategy are presented in
\cref{sec:coarsening}. Experimental results presented in \cref{sec:examples} mainly
consider the error decay of the numerical method presented. Conclusions
follow in \cref{sec:conclusions}. \cref{sec:coarseapp} briefly describes the
coarsening strategy while \cref{sec:appendix} contains tables related to the
examples.





\section{Continuous \FIX{M}{m}odel} \label{sec:continuous}


This section is devoted to the presentation of the physical model.
In \cref{subsec:single_phase} the setting for single-phase flow is introduced.
\cref{subsec:fracture_flow} presents the model for a single fracture, while the
different couplings among fractures are described in
\cref{subsec:coupling_between_fractures}.
\FIX{In \cref{subsec:well_posedness} the weak formulation of the problem
and the related functional spaces are introduced. In the same section well
posedness results are shown.}{}


\subsection{Single-phase flow in DFN} \label{subsec:single_phase}


We make use of the symbol $\sharp(A) \in \mathbb{N}$ indicating the counting
measure of $A$.  We define $\mathcal{N}$ the set of indexes associated to the
network of fractures, one value identify a fracture.  Let us consider $\sharp(
\mathcal{N} )$ distinct and planar domains $\Omega_i$, for $i \in \mathcal{N}$,
embedded in $\mathbb{R}^3$ such that their union is indicated by $\Omega \defeq
\cup_{i \in \mathcal{N}} \Omega_i$. Each polygon represents a fracture and
$\Omega$ is the discrete fracture network. We indicate by $\mathcal{I}$ the set
of indexes associated to the intersection among fractures, one value identify
one intersection.  We define also the set of one co-dimension intersections
among fractures as $\gamma$, which can be viewed also as a disjoint union of
$\sharp( \mathcal{I} )$ lines $\gamma_k$ such that $\gamma = \cup_{k \in
\mathcal{I}} \gamma_k$.  We indicate, for each fracture, the finite set of
indexes $\mathcal{G}_i \defeq \left\{ k \in \mathbb{N}: \gamma_k \cap \Omega_i
\neq \emptyset \right\}$. We suppose that if $k \in \mathcal{G}_i$ then
$\gamma_k$ belongs to the internal part of $\Omega_i$, \FIX{}{that is, we assume
fractures do not intersect along fracture boundaries}. We indicate by
$\mathcal{P}$ the set of indexes associated with the intersection of
one-codimensional objects, one value identifies one intersection.  Finally the set
of two co-dimensional intersections among fractures is indicated by $\xi$, which
can be viewed as a union of $\sharp( \mathcal{P} )$ points $\xi_l$ such that
$\xi = \cup_{l \in \mathcal{P} } \xi_l$. We suppose that if $\xi_l \subset
\gamma_k$, for some $l$ and $k$, then $\xi_l$ belongs to the internal part of
$\gamma_k$. Note that  $ \gamma \cup \xi = \cap_{i \in \mathcal{N}} \Omega_i$.
The aforementioned sets of indexes are ordered in a natural way.  See
\cref{fig:example_fracts} as an example.  Throughout the paper generally,
subscripts $i$ and $j$ denote quantities related with fracture planes
$\Omega_i$, subscript $k$ is associated with one co-dimensional intersections
$\gamma_k$, while subscript $l$ is used for two co-dimensional intersections
$\xi_l$.  We indicate data and unknowns defined on $\gamma$ with
$\hat{\cdot}$.
\begin{figure}[tbp]
    \centering
    \resizebox{0.5\textwidth}{!}{\fontsize{24pt}{7.2}\selectfont%
\begingroup%
  \makeatletter%
  \providecommand\color[2][]{%
    \errmessage{(Inkscape) Color is used for the text in Inkscape, but the package 'color.sty' is not loaded}%
    \renewcommand\color[2][]{}%
  }%
  \providecommand\transparent[1]{%
    \errmessage{(Inkscape) Transparency is used (non-zero) for the text in Inkscape, but the package 'transparent.sty' is not loaded}%
    \renewcommand\transparent[1]{}%
  }%
  \providecommand\rotatebox[2]{#2}%
  \ifx\svgwidth\undefined%
    \setlength{\unitlength}{415.26142578bp}%
    \ifx\svgscale\undefined%
      \relax%
    \else%
      \setlength{\unitlength}{\unitlength * \real{\svgscale}}%
    \fi%
  \else%
    \setlength{\unitlength}{\svgwidth}%
  \fi%
  \global\let\svgwidth\undefined%
  \global\let\svgscale\undefined%
  \makeatother%
  \begin{picture}(1,0.7406406)%
    \put(0,0){\includegraphics[width=\unitlength,page=1]{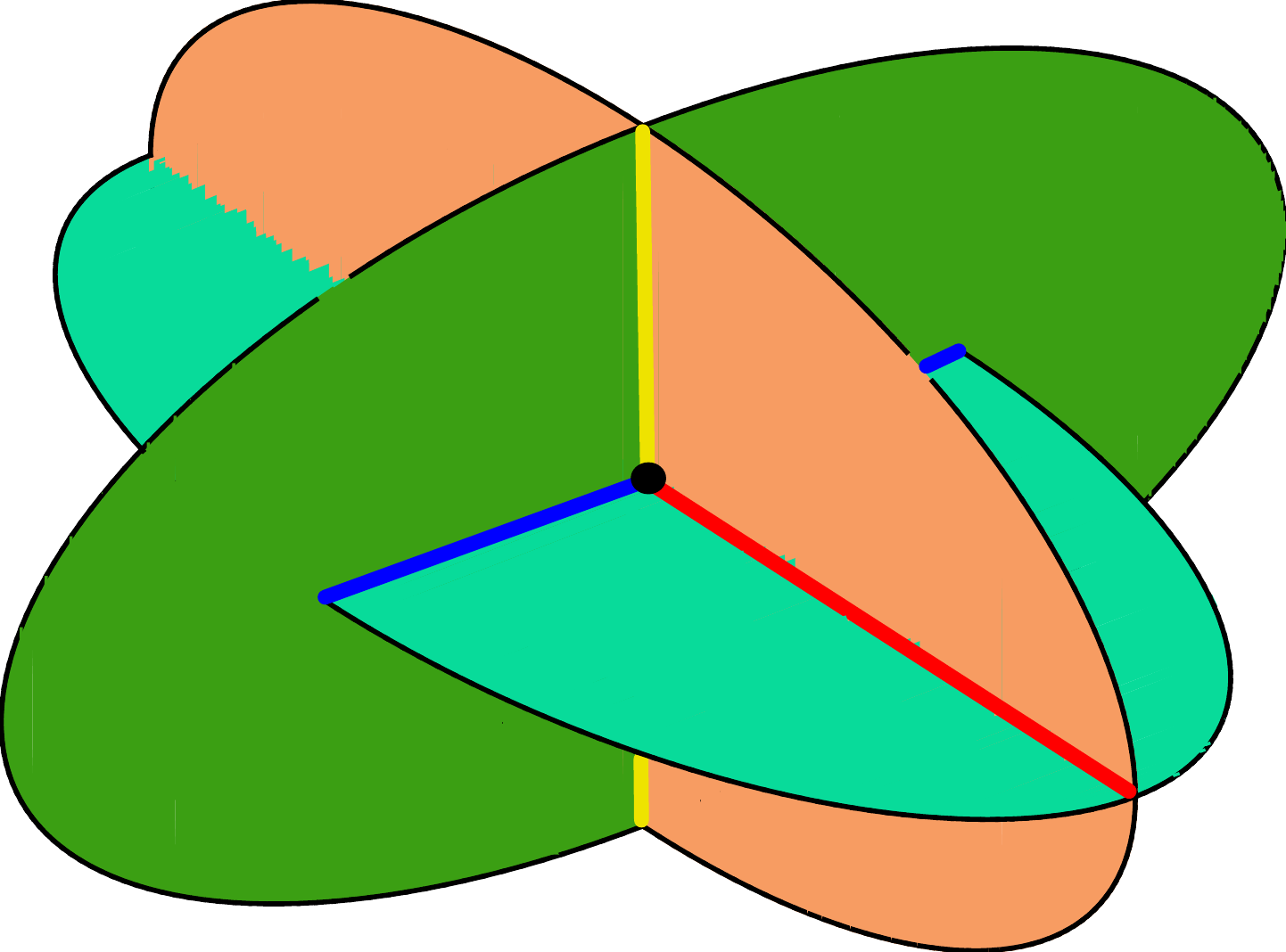}}%
    \put(0.76491029,0.56441822){\color[rgb]{0,0,0}\makebox(0,0)[lb]{\smash{$\Omega_1$}}}%
    \put(0,0){\includegraphics[width=\unitlength,page=2]{fracts.pdf}}%
    \put(0.53996008,0.16225442){\color[rgb]{0,0,0}\makebox(0,0)[lb]{\smash{$\Omega_2$}}}%
    \put(0.16101266,0.65340768){\color[rgb]{0,0,0}\makebox(0,0)[lb]{\smash{$\Omega_3$}}}%
    \put(0.43581661,0.49416917){\color[rgb]{0,0,0}\makebox(0,0)[lb]{\smash{$\gamma_1$}}}%
    \put(0.62014344,0.30536745){\color[rgb]{0,0,0}\makebox(0,0)[lb]{\smash{$\gamma_3$}}}%
    \put(0.51801581,0.37188476){\color[rgb]{0,0,0}\makebox(0,0)[lb]{\smash{$\xi$}}}%
    \put(0.35474607,0.28117938){\color[rgb]{0,0,0}\makebox(0,0)[lb]{\smash{$\gamma_1$}}}%
    \put(0,0){\includegraphics[width=\unitlength,page=3]{fracts.pdf}}%
  \end{picture}%
\endgroup%
    }%
    \caption{Example of a DFN with three intersecting fractures, represented as
    ellipses. The fractures form three one co-dimensional objects, the straight
    lines, and one two co-dimensional object, the black dot.}%
    \label{fig:example_fracts}
\end{figure}

Our objective is to compute the pressure field, indicated by $p$ and $\hat{p}$,
and the Darcy velocity field, indicated by $\bm{u}$ and $\hat{\bm{u}}$, in
$\Omega$ such that the Darcy equations are fulfilled. To simplify the presentation
we start by considering a single fracture $\Omega_i$ without intersections. The
normal of the fracture is indicated by $\bm{n}_i$, and we define the normal
projection matrix as $N_i \defeq \bm{n}_i \otimes \bm{n}_i$ and the tangential
projection matrix as $T_i \defeq I - N_i$.  In the following, for each fracture,
we make use of tangential divergence and gradient, defined as
\begin{gather*}
    \nabla_{T_i} \cdot \defeq T_i : \nabla
    \quad \text{ and } \quad
    \nabla_{T_i} \defeq T_i \nabla.
\end{gather*}
In the sequel, we will sometimes drop the subscript on $T_i$ when there
should be no room for confusion.


\subsection{\FIX{}{Fracture flow}} \label{subsec:fracture_flow}


Following the work of \cite{Martin2005,Formaggia2012} we assume that the
fracture permeability can be written as a full elliptic
second order tensor for the permeability in the tangential space. We adopt a reduced
order, or hybrid dimensional, model, to describe $p_i$ and $\bm{u}_i$ in
the fracture. The model explicitly takes into account the hydraulic aperture
$d_i$ of the fracture, which can change in space along the fracture.
\FIX{}{We require that exist $d_* > 0$ such that $d_i \geq d_*$ for all $i$,
i.e., the fracture does not degenerate. In the forthcoming models we assume that
the permeability and fracture aperture are constant in each cell of the
computational grid, this is a practical assumption when the discretization is
applied.} The
generalized Darcy equation and the conservation of mass in $\Omega_i$ can be
written as
\begin{gather}\label{eq:darcy_frac}
    \begin{aligned}
        &\bm{u}_i + \FIX{\kappa_i}{\lambda_i} \nabla p_i = \bm{0} \\
        &\nabla \cdot \bm{u}_i = \FIX{q_i}{f_i}
    \end{aligned}
    \quad \text{ in } \Omega_i
    \FIX{,}{%
    \quad\text{and}\quad
    p_i = g_i \text{ on } \partial \Omega_i,
    }
\end{gather}
where\FIX{}{, for each $\Omega_i$,} $\FIX{\kappa_i}{\lambda_i} \defeq d_i k_i $
is the effective permeability\FIX{$\Omega_i$}{, $k_i$ the tangential
permeability, $f_i$ the scalar sink or source, and $g_i$ the boundary
condition}.  \FIX{Suitable boundary conditions can be considered to
\cref{eq:darcy_frac} such that the problem is well posed, see
\cite{Brezzi1991}.}{For simplicity of the proofs, we assume Dirichlet conditions
on all fractures, although it would suffice to require only Dirichlet conditions
on a non-zero measure portion of the boundary. The case of where some fractures with only Neumann
conditions are imposed is studied in the numerical examples. The proofs can also be
extended to this case to the price of increased technicalities.  Under these
assumptions, problem \cref{eq:darcy_frac} is well posed, see \cite{Brezzi1991}.}


\subsection{\FIX{}{Coupling Flow Between Fractures}}%
\label{subsec:coupling_between_fractures}


Now, consider two fractures $\Omega_i$ and $\Omega_j$ such that a one
co-dimensional intersection occurs $\gamma_{k} = \Omega_i \cap \Omega_j$.  We
indicate by $\bm{n}_k$ the set of unit vectors orthogonal to $\gamma_k$.  In the
internal part of $\Omega_i$, \ie in $\Omega_i \setminus \gamma_k$, equation
\cref{eq:darcy_frac} is applied and for each internal point of $\gamma_k$, far
from its possible internal ending, we locally identify two sides of $\Omega_i$
indicated $\Omega_i^+$ and $\Omega_i^-$.  Since this definition is local, for
simplicity we keep uniform notation $\Omega_i^+$ and $\Omega_i^-$ for ``the same
side'' of $\Omega_i$.  We indicate data and unknowns restricted to one side of a
fracture with a $+$ or $-$ superscript. \FIX{}{For convenience $\partial
\Omega_i$ represents the outer part of the boundary, i.e., without the
intersection.} We indicate by $T_i \bm{n}_k$ the unit
normal vector of $\gamma_k$ lying in the plane of $\Omega_i$ and pointing from
$\Omega_i^+$ to $\Omega_i^-$.  $T_j \bm{n}_k$ is defined similarly. See
\cref{fig:example_fr} as an example.
\begin{figure}[tbp]
    \centering
    \resizebox{0.5\textwidth}{!}{\fontsize{20pt}{8}\selectfont%
\begingroup%
  \makeatletter%
  \providecommand\color[2][]{%
    \errmessage{(Inkscape) Color is used for the text in Inkscape, but the package 'color.sty' is not loaded}%
    \renewcommand\color[2][]{}%
  }%
  \providecommand\transparent[1]{%
    \errmessage{(Inkscape) Transparency is used (non-zero) for the text in Inkscape, but the package 'transparent.sty' is not loaded}%
    \renewcommand\transparent[1]{}%
  }%
  \providecommand\rotatebox[2]{#2}%
  \ifx\svgwidth\undefined%
    \setlength{\unitlength}{332.2895752bp}%
    \ifx\svgscale\undefined%
      \relax%
    \else%
      \setlength{\unitlength}{\unitlength * \real{\svgscale}}%
    \fi%
  \else%
    \setlength{\unitlength}{\svgwidth}%
  \fi%
  \global\let\svgwidth\undefined%
  \global\let\svgscale\undefined%
  \makeatother%
  \begin{picture}(1,0.77575234)%
    \put(0,0){\includegraphics[width=\unitlength,page=1]{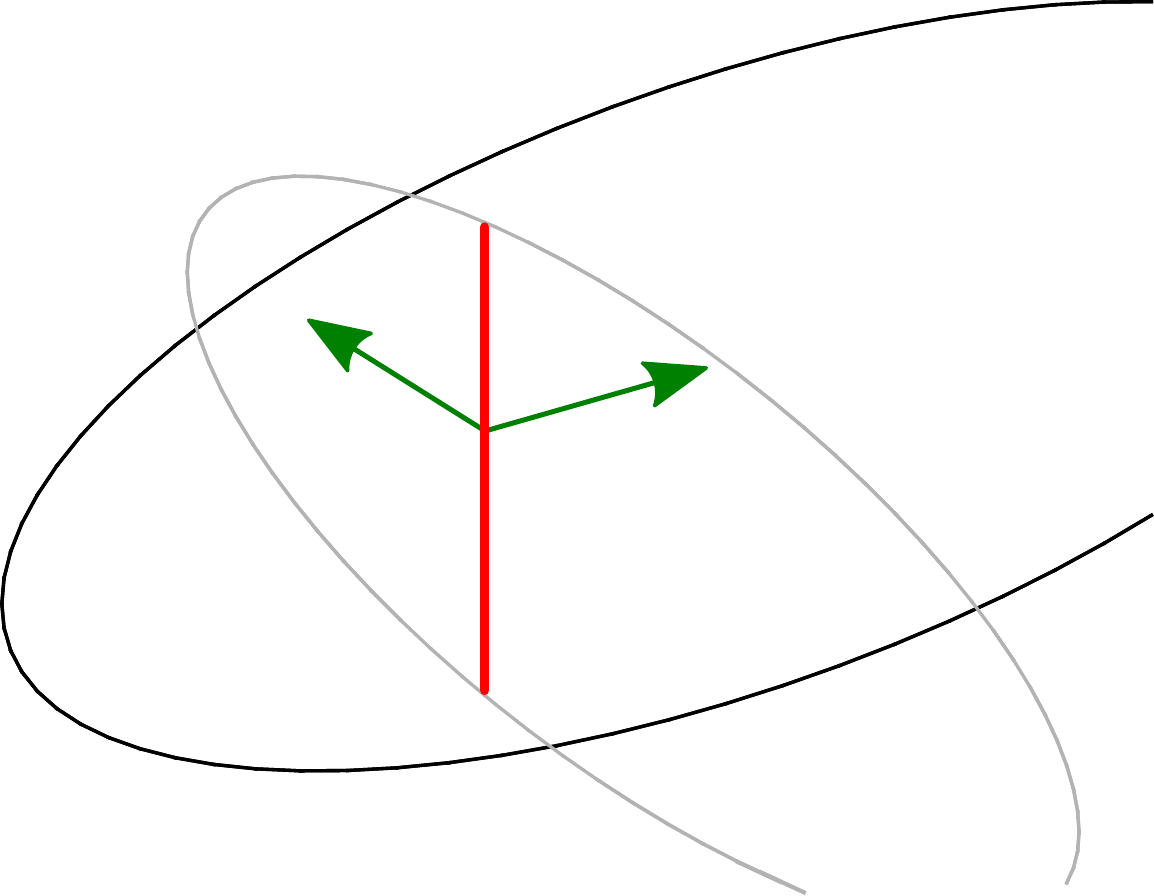}}%
    \put(0.55818362,0.3797893){\color[rgb]{0,0,0}\makebox(0,0)[lb]{\smash{$T_j \bm{n}_k$}}}%
    \put(0.26862133,0.40189945){\color[rgb]{0,0,0}\makebox(0,0)[lb]{\smash{$T_i \bm{n}_k$}}}%
    \put(0.82078876,0.69271686){\color[rgb]{0,0,0}\makebox(0,0)[lb]{\smash{$\Omega_j$}}}%
    \put(0.63793315,0.07873308){\color[rgb]{0,0,0}\makebox(0,0)[lb]{\smash{$\Omega_i^+$}}}%
    \put(0.30403482,0.5129695){\color[rgb]{0,0,0}\makebox(0,0)[lb]{\smash{$\Omega_i^-$}}}%
    \put(0.43388422,0.24905498){\color[rgb]{0,0,0}\makebox(0,0)[lb]{\smash{$\gamma_k$}}}%
  \end{picture}%
\endgroup%
    }%
    \caption{Example of two fractures with the notation introduced in the
    derivation of the models.}%
    \label{fig:example_fr}
\end{figure}
Following \cite{Alboin2000a,Amir2005}, at the intersection we assume the
coupling conditions along $\gamma_k$ as
\begin{gather}\label{eq:darcy_coupling_cont}
    \begin{aligned}
        &\sum_{m = i,j} \jump{\bm{u}_{m} \cdot T_m \bm{n}_{k}}{\gamma_k} = 0\\
        &\FIX{p_i |_{\gamma_k} = p_j |_{\gamma_k}}%
        {p_i^+ |_{\gamma_k} = p_i^- |_{\gamma_k} =
        p_j^+ |_{\gamma_k} = p_j^- |_{\gamma_k}}
    \end{aligned}
    \quad \text{ on } \gamma_k,
\end{gather}
where $\cdot |_{\gamma_k}$ is the trace operator from a fracture, $\Omega_i$ or
$\Omega_j$, to $\gamma_k$
and $\jump{\cdot}{\gamma_k}$ denotes the jump operator across $\gamma_k$ defined
as $\jump{ \bm{u}_m \cdot T_m \bm{n}_k}{\gamma_k} \defeq \bm{u}_m^+ \cdot T_m
\bm{n}_k |_{\gamma_k} - \bm{u}_m^- \cdot T_m \bm{n}_k |_{\gamma_k}$, for $m=i,j$.
Thus, while we allow for flow from $\Omega_i$ to $\Omega_j$, the permeability
of the intersection is so high, or the width of the fracture so small, that
the pressure drop is negligible, see \cite{Martin2005} for further considerations.

In subsurface flow, intersections between fractures, or faults, may be
significant conduits for fluid flow \FIX{}{\cite{Rotevatn2009,Peacock2017}}.
It is therefore of interest to extend the
model \cref{eq:darcy_coupling_cont} to allow for tangential flow along
$\gamma_k$ and for pressure jumps over the intersection, as studied in \eg
\cite{Formaggia2012,Schwenck2015,Boon2016}. Flow in the intersection
is still modeled by a generalized Darcy's law with an additional
source term from the surrounding fractures
\begin{subequations} \label{eq:darcy_coupling_disc}
\begin{gather} \label{eq:darcy_coupling_inters}%
    \begin{aligned}
        &\hat{\bm{u}}_{k} + \FIX{\hat{\kappa}_k}{\hat{\lambda}_k} \nabla
        \hat{{p}}_k = \bm{0} \\
        &\nabla \cdot \hat{\bm{u}}_k = \FIX{\hat{q}_k}{\hat{f}_k} +
        \sum_{m = i,j} \jump{ \bm{u}_m \cdot T_m \bm{n}_{k}}{\gamma_k}
    \end{aligned}
    \text{in } \gamma_{k}
    \FIX{,}{%
    \quad\text{and}\quad
    \begin{aligned}
        &\hat{p}_k = \hat{g}_k & \text{ on } \partial \gamma_k\\
        &\hat{\bm{u}}_k \cdot \bm{n}_{\gamma_k} = 0 & \text{ on } \partial
        \gamma_k^{in}
    \end{aligned}
    }
\end{gather}
where $\FIX{\hat{\kappa}_k}{\hat{\lambda}_k} \defeq d_i d_j \hat{k}(\gamma_k)$
is the effective permeability
along $\gamma_k$, $\hat{k}(\gamma_k)$ is the tangential permeability of
$\gamma_k$\FIX{}{, and $\hat{g}_k$ is the boundary condition on the, possibly
empty, outer part
(i.e., included in $\partial \Omega_i$ or $\partial \Omega_j$) of
the boundary of $\gamma_k$, for simplicity, indicated by $\partial \gamma_k$.
We define as $\gamma_k^{in}$ the, possible empty, part of the boundary of
$\gamma_k$ such that it lies internally to $\Omega_i$ or $\Omega_j$. We
identify by $\bm{n}_{\gamma_k}$ the unit vector tangential to the
intersection pointing outward with respect to $\partial\gamma_k^{in}$.} Note
that $d_i$, respectively $d_j$, is the aperture of fracture $\Omega_i$,
respectively $\Omega_j$. Their product represents the measure of the area normal
to the intersection. The differential operators are now defined along $\gamma_k$
and can be written in terms of local coordinates.  The coupling condition
\cref{eq:darcy_coupling_cont} is generalized, for $m=i,j$, as
\begin{gather} \label{eq:darcy_coupling_discontinuous}%
    \begin{aligned}
        &\bm{u}_m^+ \cdot T_m \bm{n}_{k} |_{\gamma_k}
        = - \FIX{\widetilde{\kappa}_{m}}{\widetilde{\lambda}_{m}}
        \left( \hat{p}_k - p_m^+ |_{\gamma_k}\right)\\
        &\bm{u}_m^- \cdot T_m \bm{n}_{k}|_{\gamma_k}
        = - \FIX{\widetilde{\kappa}_{m}}{\widetilde{\lambda}_{m}}
        \left( p_m^- |_{\gamma_k}- \hat{p}_k \right)
    \end{aligned}
    \quad \text{in } \gamma_k
\end{gather}
\end{subequations}
where $\FIX{\widetilde{\kappa}_{m}}{\widetilde{\lambda}_{m}}
\defeq \widetilde{k}(\gamma_k) / d_m$ is the effective
permeability of the fracture $\Omega_m$ in the direction normal to the intersection $\gamma_k$, and
$\widetilde{k}(\gamma_k)$ is the normal permeability of $\gamma_k$.
\FIX{We assume zero flow
boundary conditions at the  tips of the intersections.}{} Note that it is
possible to have different normal permeability for each fracture and even for
each side of a fracture, however to avoid confusion we assume a unique value for
$\widetilde{k}(\gamma_k)$.
Following \cite{DAngelo2011,Formaggia2012} the coupling
conditions \cref{eq:darcy_coupling_discontinuous} can be written as
\begin{gather*}
    \begin{aligned}
        &\jump{p_m}{\gamma_k} =
        \FIX{\widetilde{\kappa}_{m}^{-1}}{\widetilde{\lambda}_{m}^{-1}}
        \left(
        \bm{u}_m^+ \cdot T_m \bm{n}_{k} |_{\gamma_k} + \bm{u}_m^- \cdot T_m
        \bm{n}_{k}|_{\gamma_k} \right)\\
        &\jump{\bm{u}_m \cdot T_m \bm{n}_{k}}{\gamma_k} =
        \FIX{\widetilde{\kappa}_{m}}{\widetilde{\lambda}_{m}}
        \left( p_m^+ + p_m^- \right)
    \end{aligned}
    \quad \text{in } \gamma_k
\end{gather*}
where the jump of pressure and velocity across $\gamma_k$ are explicitly
written.

Finally, considering a single two co-dimensional intersection between fractures,
we have $\xi_l = \gamma_k \cap \gamma_n$. Following the idea proposed in
\cite{Boon2016} we require the same model presented in
\cref{eq:darcy_coupling_cont} at the intersection, we have
\begin{gather} \label{eq:two_codimensional}
    \begin{aligned}
        &\sum_{m = k,n} \jump{\hat{\bm{u}}_{m} \cdot T_m \bm{n}_l}{\xi_l} = 0\\
        &\FIX{\hat{p}_k |_{\xi_l} = \hat{p}_n |_{\xi_l}}
        {\hat{p}_k^+ |_{\xi_l} = \hat{p}_k^- |_{\xi_l} =
        \hat{p}_n^+ |_{\xi_l} = \hat{p}_n^- |_{\xi_l}}
    \end{aligned}
    \quad \text{in } \xi_l
\end{gather}
where $\cdot |_{\xi_l}$ is the trace operator from a one co-dimensional
intersection to $\xi_l$ and $T_m \bm{n}_l$ is a unit vector, pointing outward
from $\xi_l$, and lying on $\gamma_m$.  While it is possible to extend the model
to allow for jumps in pressure and velocity over $\xi_l$, see
\cite{Formaggia2012,Fumagalli2012g}, this is not considered herein.

We can now define two problems defined on the network $\Omega$ which will
be used in the sequel. The first problem considers continuous coupling conditions
at the fracture intersections, it is the following.
\begin{problem}[DFN Darcy flow - continuous coupling]\label{pb:cc}
    Considering the aforementioned data and assumptions on $\Omega$, the model
    problem with continuous coupling conditions is to find $(\bm{u}, p)$ in
    $\Omega$ such that \cref{eq:darcy_frac} is valid for all
    $i \in \mathcal{N}$\FIX{,}{ and } \cref{eq:darcy_coupling_cont} is valid for
    all $k \in \mathcal{I}$\FIX{, and \cref{eq:two_codimensional} is valid for
    all $l \in \mathcal{P}$}{}.
\end{problem}
The second problem allows a flow tangential to the fracture
intersection and a possible pressure jump across the one co-dimensional
intersections.
\begin{problem}[DFN Darcy flow - discontinuous coupling]\label{pb:dc}
    Considering the aforementioned data and assumptions on $\Omega$,
    the model problem with discontinuous coupling conditions is to
    find $(\bm{u}, p)$ in
    $\Omega$ such that \cref{eq:darcy_frac} is valid for all
    $i \in \mathcal{N}$, \cref{eq:darcy_coupling_disc} is valid for
    all $k \in \mathcal{I}$, and \cref{eq:two_codimensional} is valid for
    all $l \in \mathcal{P}$.
\end{problem}

\begin{remark}
    In the case that $\gamma_k$ is the one co-dimensional intersection of more
    than two fractures, coupling conditions \cref{eq:darcy_coupling_cont} can be
    generalized extending the summation of the normal fluxes on all the
    fractures and assuming the pressure continuity at the intersection.  A
    similar consideration can be done for the coupling conditions
    \cref{eq:darcy_coupling_disc} where the summation is extended to all the
    fractures involved and the definition of $\FIX{\hat{\kappa}_k}{\hat{\lambda}_k}$
    should include the
    measure of the cross section of $\gamma_k$, now simply represented as a
    rectangle.
\end{remark}

\begin{remark}
    In the case that $\xi_l$ is the two co-dimensional intersection of more
    than two fractures, coupling conditions \cref{eq:two_codimensional} can be
    generalized assuming the pressure continuity at the intersection for
    all the one co-dimensional objects.
\end{remark}



\section{\FIX{}{Well posedness}} \label{sec:well_posedness}


\FIX{In this part we analyse the well posedness of the weak form of \cref{pb:cc} and
\cref{pb:dc}.}{In this part the weak formulation of the problem \cref{pb:cc} and
\cref{pb:dc} and the related functional spaces are introduced. We provide also
well posedness results for both problems.}


\subsection{\FIX{}{DFN Darcy flow - continuous coupling}}


We define $\norm{\cdot}_E : L^2(E) \rightarrow \mathbb{R}$
as the usual $L^2$-norm and $\left( \cdot, \cdot \right)_E : L^2(E) \times
L^2(E) \rightarrow \mathbb{R}$ which is the usual $L^2$-scalar product.  To simplify
the presentation we use this notation for both scalar and vector functions.
We indicate by $[\cdot]_i$ and $[\cdot]_{ij}$ the $i$-th, respectively $ij$-th,
component of the vector, respectively of the matrix, in the square brackets. We
assume also pressure boundary conditions assigned to each $\partial \Omega_i$ by
a function $\FIX{f_i}{g_i} \in H^{\frac{1}{2}}(\partial \Omega_i)$, a source term
$\FIX{q_i}{f_i} \in
L^2(\Omega_i)$, and $[\FIX{\kappa_i}{\lambda_i}]_{jk} \in L^\infty( \Omega_i) $.
Given a regular domain
$E \in \Omega_i$, we define the functional spaces $Q(E) \defeq L^2(E)$ for the scalar fields
and $V(E) \defeq \left\{ \bm{v} \in [L^2(E)]^3: \nabla \cdot \bm{v} \in
L^2(E)  \right\}$ for the vector fields, which are Hilbert spaces. It is worth to
remember that we are dealing with differential operators on the tangent planes of the
fractures. With an abuse
of notation we write $V_i = V(\Omega_i)$ and $Q_i = Q(\Omega_i)$ for each
fracture.  We endow $Q_i$ with the usual $L^2$-norm and $V_i$ with the $H_{\rm
div}$-norm on the tangent space, defined as $\norm{\bm{v}}_{V_i}^2 \defeq
\norm{\bm{v}}_{\Omega_i}^2 + \norm{\nabla \cdot \bm{v}}_{\Omega_i}^2$.  The
global spaces for the network $\Omega$ are defined as $Q(\Omega) \defeq
\prod_{i \in \mathcal{N}} Q_i$ with norm  and $V(\Omega) \defeq
\prod_{i \in \mathcal{N}} V_i$,
with an abuse of notation we indicate $V = V(\Omega)$ and $Q = Q(\Omega)$.
Their norms are defined as $\norm{\FIX{v}{q}}_{Q}^2 \defeq
\sum_{i \in \mathcal{N}} \norm{\FIX{v_i}{q_i}}_{Q_i}^2$ and
$\norm{\bm{v}}_{V}^2 \defeq
\sum_{i \in \mathcal{N}} \norm{\bm{v}_i}_{V_i}^2$. Following the standard
procedure we can derive the weak formulation of \cref{pb:cc} which requires the
definition of the following bilinear forms
\begin{gather*}
    a \left( \cdot, \cdot \right): V \times V \rightarrow \mathbb{R}:
    \quad a \left( \bm{u}, \bm{v} \right) \defeq \sum_{i \in \mathcal{N}}
    a_{i} \left( \bm{u}_i,
    \bm{v}_i \right), \,\,
    a_{i} \left( \bm{u}_i, \bm{v}_i \right) \defeq \left(
    \FIX{\kappa_i^{-1}}{\lambda_i^{-1}}
    \bm{u}_i, \bm{v}_i \right)_{\Omega_i}
    \\
    b \left( \cdot, \cdot \right): V \times Q \rightarrow \mathbb{R}:
    \quad b \left( \bm{u}, \FIX{v}{q} \right) \defeq \sum_{i \in \mathcal{N}}
    b_{i} \left( \bm{u}_i, \FIX{v_i}{q_i} \right),\,\,
    b_{i} \left( \bm{u}_i, \FIX{v_i}{q_i} \right) \defeq -
    \left( \nabla \cdot
    \bm{u}_i, \FIX{v_i}{q_i} \right)_{\Omega_i}
\end{gather*}
and the following functionals which include the boundary data of the problem and
the scalar source term
\begin{gather*}
    B \left(\cdot  \right) : V \rightarrow \mathbb{R}:
    \quad B \left( \bm{v} \right) \defeq - \sum_{i \in \mathcal{N}} \langle
    \bm{v}_i \cdot \bm{n}_{\partial \Omega_i}, \FIX{f_i}{g_i}
    \rangle_{\partial \Omega_i}\\
    F \left(\cdot  \right) : Q \rightarrow \mathbb{R}:
    \quad F \left( \FIX{v}{q} \right) \defeq - \sum_{i \in \mathcal{N}} \left(
    \FIX{v_i}{q_i}, \FIX{q_i}{f_i} \right)_{\Omega_i}
\end{gather*}
where the dual pairing is defined as $\langle
\cdot, \cdot \rangle_{\partial \Omega_i}: H^{-\frac{1}{2}}(\partial \Omega_i)
\times H^{\frac{1}{2}}(\partial \Omega_i) \rightarrow \mathbb{R}$ and
$\bm{n}_{\partial \Omega_i}$ is the unit normal pointing outward from $\partial
\Omega_i$ and tangent to $\Omega_i$. With this definitions we introduce the
following problem.
\begin{problem}[Weak formulation of \cref{pb:cc}]\label{pb:weak_cc} The weak formulation of
\cref{pb:cc} is to find $(\bm{u}, p) \in V \times Q$ such that
    \begin{gather}
        \begin{aligned}
            &a \left( \bm{u}, \bm{v} \right) + b \left( \bm{v}, p \right) = B \left(
            \bm{v} \right)
            & \forall \bm{v} \in V \\
            & b \left( \bm{u}, \FIX{v}{q} \right) = F \left( \FIX{v}{q} \right)
            & \forall \FIX{v}{q} \in Q
        \end{aligned}
    \end{gather}
\end{problem}
\begin{theorem}\label{teo:well_posed_cc}
    \cref{pb:weak_cc} is well posed.
\end{theorem}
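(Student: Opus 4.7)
The plan is to apply the Brezzi theorem for mixed variational problems \cite{Brezzi1991} to $\cref{pb:weak_cc}$. This reduces the well-posedness claim to checking the four standard ingredients: continuity of the bilinear forms $a$ and $b$, continuity of the functionals $B$ and $F$, coercivity of $a$ on $\ker b$, and the inf-sup condition for $b$. Existence, uniqueness, and continuous dependence on the data then follow from the abstract theory.

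For the continuity ingredients, the $L^\infty$-bounds on $[\lambda_i]_{jk}$ together with the standing assumption that $\lambda_i$ is uniformly positive definite (so that $\lambda_i^{-1}$ is essentially bounded) give $|a(\bm{u},\bm{v})| \leq C_a \|\bm{u}\|_V \|\bm{v}\|_V$ via Cauchy-Schwarz. Continuity of $b$ is immediate from Cauchy-Schwarz and the definition of the $V$-norm. Continuity of $F$ follows from $f_i \in L^2(\Omega_i)$ and Cauchy-Schwarz, while continuity of $B$ uses the normal-trace theorem $V_i \to H^{-1/2}(\partial \Omega_i)$ combined with the hypothesis $g_i \in H^{1/2}(\partial \Omega_i)$.

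Coercivity of $a$ on $\ker b$ is then routine. Since $\ker b = \{\bm{v} \in V : \nabla_{T_i} \cdot \bm{v}_i = 0 \text{ in every } \Omega_i\}$, the $V$-norm on $\ker b$ coincides with the $L^2$-norm on the product of tangent spaces, and the uniform lower bound on the eigenvalues of $\lambda_i$ yields $a(\bm{v},\bm{v}) \geq \alpha \|\bm{v}\|_V^2$ with $\alpha$ independent of $\bm{v}$.

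The hard part is the inf-sup condition: for every $q \in Q$ I need $\bm{v} \in V$ with $-b(\bm{v},q) \geq \beta \|q\|_Q \|\bm{v}\|_V$. The natural Fortin-type construction is fracture by fracture: on each $\Omega_i$ solve the tangential Poisson problem $-\Delta_{T_i} \phi_i = q_i$ with homogeneous Dirichlet data on $\partial \Omega_i$, which is well posed by Lax-Milgram thanks to Poincaré and the assumption that each $\partial \Omega_i$ carries a Dirichlet condition, then set $\bm{v}_i = \nabla_{T_i} \phi_i$. By construction $\nabla_{T_i} \cdot \bm{v}_i = -q_i$ so that $-b(\bm{v},q) = \|q\|_Q^2$, and the standard $H(\text{div})$-stability estimate on each planar fracture gives $\|\bm{v}_i\|_{V_i} \leq C_i \|q_i\|_{\Omega_i}$. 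Summing over $i \in \mathcal{N}$ yields $\|\bm{v}\|_V \leq C \|q\|_Q$, hence $\beta = 1/C > 0$. The one point deserving care is that $V$ has been defined as the mere product of the local $V_i$, so no additional compatibility with the coupling at the one- and two-codimensional intersections needs to be imposed on the Fortin interpolant; the continuous-coupling conditions of $\cref{pb:cc}$ will be recovered as natural conditions of the saddle-point system. With the four Brezzi ingredients in place, a unique $(\bm{u},p) \in V \times Q$ solving $\cref{pb:weak_cc}$ exists and satisfies the expected stability estimate in terms of $f_i$ and $g_i$, which is precisely the claim of the theorem.
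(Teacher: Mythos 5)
Your proof is correct and follows the same Brezzi-type skeleton as the paper (continuity of $a$, $b$, $B$, $F$; coercivity of $a$ on the kernel of $b$; inf-sup for $b$ via fracture-wise auxiliary Poisson problems), but your inf-sup construction differs in one substantive point. The paper solves $-\nabla_{T_i}\cdot\nabla_{T_i}\varphi_i=q_i$ with homogeneous Dirichlet data on $\partial\Omega_i\cup\left\{\gamma\cap\Omega_i\right\}$, i.e.\ it also pins the potential on the intersection traces. This forces a case distinction (intersection cutting the fracture through versus immersed tip), an appeal to $H^2$ regularity on the cut pieces in the first case, and in the immersed case only $H^{3/2-\epsilon}$ regularity via Grisvard plus a mollification and limiting argument to produce an admissible velocity. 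You impose the Dirichlet condition only on $\partial\Omega_i$; then $\varphi_i\in H^1_0(\Omega_i)$ by Lax--Milgram, $\bm{v}_i=\nabla_{T_i}\varphi_i$ lies in $V_i$ with $\nabla_{T_i}\cdot\bm{v}_i=-q_i$, and the stability bound needs only the energy estimate and Poincar\'e --- no elliptic regularity, no case analysis, no mollifiers. Your candidate is moreover automatically compatible with the continuous-coupling constraint, since its normal flux does not jump across $\gamma_k$ inside each fracture, so $\sum_m\jump{\bm{v}_m\cdot T_m\bm{n}_k}{\gamma_k}=0$ trivially, whereas the paper's candidate generally has non-vanishing one-sided fluxes on $\gamma_k$. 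What the paper's heavier construction buys is that it is the template reused verbatim in the proof for the discontinuous-coupling problem, where prescribed normal fluxes on $\gamma$ are genuinely needed to match the $\hat{q}$ data; for the present theorem it is not required. Two cosmetic remarks: with your choice of $\bm{v}$ one gets $b(\bm{v},q)=+\norm{q}_Q^2$ rather than $-b(\bm{v},q)=\norm{q}_Q^2$ (immaterial, since the supremum in the inf-sup condition ranges over $\pm\bm{v}$), and the inf-sup constant involves $\max_i C_i$, which is finite because the network contains finitely many fractures.
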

\begin{proof}
    Using standard arguments, mainly Cauchy-Schwarz inequality, it is possible
    to prove the boundedness, and then the continuity, of the bilinear forms $a$
    and $b$ on their spaces as well as the functionals $F$ and $B$.  To show
    coercivity of $a$, on the kernel of $b$ we consider a function $\bm{w} \in
    V$ such that $b\left( \bm{w}, \FIX{v}{q} \right) = 0$ for all $\FIX{v}{q} \in Q$.  We then
    have $\nabla_{T_i} \cdot \bm{w}_i = 0$ almost everywhere in $\Omega_i$ and
    $\norm{ \bm{w} }_V^2 = \sum_{i \in \mathcal{N}} \norm{ \bm{w}_i
    }_{\Omega_i}^2$, thus
    \begin{gather*}
        a\left( \bm{w}, \bm{w} \right) = \sum_{i \in \mathcal{N}} \left(
        \FIX{\kappa_i^{-1}}{\lambda_i^{-1}} \bm{w}_i, \bm{w}_i \right)_{\Omega_i} \gtrsim
        \norm{\bm{w}}_V^2.
    \end{gather*}
    To simplify the proof we now consider only two fractures, being the case of multiple
    fractures a straightforward extension where the following techniques are
    properly applied to each fracture intersection.
    To verify the inf-sup condition on $b$ \FIX{,}{we can have two possibilities:
    either the intersection does not reach the boundary of the fracture or its
    contrary. In both cases,} let $q$ be a function in $Q$
    and consider the following auxiliary problem
    \begin{gather*}
        \begin{aligned}
            & - \nabla_{T_i} \cdot \nabla_{T_i} \varphi_i = q_i & \text{in }
            \Omega_i\\
            & \varphi_i = 0 & \text{on } \partial \Omega_i \cup \left\{ \gamma \cap
            \Omega_i \right\}
        \end{aligned},
    \end{gather*}
    which are decoupled problems, one for each fracture. \FIX{Following
    \cite{Dziuk1988} the previous problem admits a unique solution
    $\varphi_i \in H^2(\Omega_i)$ $ \forall i$, such that
    $\norm{\varphi_i}_{H^2(\Omega_i)} \lesssim \norm{q_i}_{\Omega_i}$.
    We consider now $\bm{v} \in V$ such that $\bm{v}_i =
    \nabla_{T_i} \varphi_i$, we have $- \nabla_{T_i} \cdot \bm{v}_i =
    q_i$ and}{If the
    intersection completely cuts $\Omega_i$ then, following \cite{Dziuk1988} the
    previous problem admits a unique solution on each disconnected part of
    $\Omega_i$, called
    $\Omega_i^*$. We have $\varphi_i^* \in H^2(\Omega_i^*)$ $ \forall i$, such
    that $\norm{\varphi_i^*}_{H^2(\Omega_i^*)} \lesssim \norm{q_i}_{\Omega_i^*}$.
    With an abuse of notation, indicating the $H^2$-broken norm with the same
    symbol, we obtain $\norm{\varphi_i}_{H^2(\Omega_i)} \lesssim
    \norm{q_i}_{\Omega_i}$. We consider now $\bm{v} \in V$ such that
    $\bm{v}|_{\Omega_i^*} =
    \nabla_{T_i} \varphi_i^*$, we have $- \nabla_{T_i} \cdot \bm{v}|_{\Omega_i^*} =
    q_i$ in $\Omega_i^*$ and}
    \begin{gather*}
        \norm{\bm{v}}_{V}^2 = \sum_{i \in \mathcal{N} }
        \norm{\nabla_{T_i} \varphi_i}^2_{\Omega_i} + \norm{q_i}^2_{\Omega_i}
        \lesssim \norm{q}_Q^2.
    \end{gather*}
    With this choice of $\bm{v}$ we obtain the boundedness from below of the
    bilinear form $b$
    \begin{gather*}
        b\left( \bm{v}, q \right) = \sum_{i \in \mathcal{N}} - \left(
        \nabla_{T_i} \cdot \bm{v}, q \right)_{\Omega_i} =
        \norm{ q }_Q^2 \gtrsim \norm{q}_Q \norm{\bm{v}}_V.
    \end{gather*}
    Following \cite{Brezzi1991} we conclude that \FIX{}{in the case where the intersection
        boundaries coincide with the boundary of the fracture planes},
    \cref{pb:weak_cc} is well posed.
    \FIX{}{We consider now the problem of partially immersed intersection, \ie,
    one ending part touches a fracture boundary and the other is immersed in the
    fracture. The fully immersed intersection is a straightforward extension,
    where the same technique is applied to both the ending parts.
    If the intersection is immersed in the fracture planes, following
    \cite{Dziuk1988,Grisvard1992} the previous auxiliary problem admits a unique
    solution $\varphi_i \in H^{\frac{3}{2}-\epsilon}(\Omega_i)$, $\epsilon > 0$
    and for all $i \in \mathcal{N}$. Define a regularized kernel $\theta \in
    {C}^\infty(T \Omega_i)$ with compact support in $T
    \Omega_i$, that is, the tangent space of $\Omega_i$, centred in the immersed
    part of $\gamma$. We introduce also the
    corresponding family of mollifiers $\theta_\zeta(x) = \zeta^{-n}
    \theta(x/\zeta)$ for $\zeta > 0$ and $n \in \mathbb{N}$. We consider now
    $\bm{v}_\zeta \in V \cap C^\infty(T \Omega_i)$ such that
    $[\bm{v}_\zeta]_j = \phi_\zeta \ast [\nabla_{T_i}
    \varphi_i]_j$ for $j=1,2$. We have
    \begin{gather*}
        \norm{\bm{v}_\zeta}_{V}^2 =
        \norm{\theta_\zeta \ast \nabla_{T_i} \varphi_i}^2_{\Omega_i} +
        \norm{ \nabla_{T_i} \cdot \theta_\zeta \ast \nabla_{T_i} \varphi_i}^2_{\Omega_i}
    \end{gather*}
    where, with an abuse of notation, the mollifier is applied to each
    component of $\bm{v}_i$. Using the properties of the mollifiers, we can
    bound the first term as
    \begin{gather*}
        \norm{\theta_\zeta \ast \nabla_{T_i} \varphi_i}_{\Omega_i} \leq
        \norm{\nabla_{T_i} \varphi_i}_{\Omega_i} \leq
        \norm{\varphi_i}_{H^1(\Omega_i)} \leq \norm{q}_{\Omega_i},
    \end{gather*}
    while the second term can be estimated as
    \begin{gather*}
        \norm{ \nabla_{T_i} \cdot \theta_\zeta \ast \nabla_{T_i}
        \varphi_i}_{\Omega_i} = \norm{ \theta_\zeta \ast \nabla_{T_i} \cdot
        \nabla_{T_i} \varphi_i}_{\Omega_i} \leq \norm{ \nabla_{T_i} \cdot
        \nabla_{T_i} \varphi_i}_{\Omega_i} = \norm{q}_{\Omega_i}.
    \end{gather*}
    We obtain $\norm{\bm{v}_\zeta}_{V} \lesssim \norm{q_i}_Q$.
    With this choice we have for a $\Omega_i$
    \begin{gather*}
        b_i\left( \bm{v}_\zeta, q_i \right) = - \left(
        \nabla_{T_i} \cdot \bm{v}_\zeta, q_i \right)_{\Omega_i} = - \left(
        \nabla_{T_i} \cdot \theta_\zeta \ast \nabla_{T_i} \varphi_i, q_i
        \right)_{\Omega_i} = \\
        = - \left(\theta_\zeta \ast
        \nabla_{T_i} \cdot  \nabla_{T_i} \varphi_i, q_i
        \right)_{\Omega_i} = \left(\theta_\zeta \ast q_i, q_i
        \right)_{\Omega_i}.
    \end{gather*}
    Using again the property of the mollifiers we obtain that for $n \rightarrow
    \infty$: $\bm{v}_\zeta \xrightarrow{L^2} \bm{v}$, with $\bm{v} \in V$, and
    $\theta_\zeta \ast q_i\xrightharpoonup{L^2}q_i $.
    Following \cite{Brezzi1991} we conclude also in this case that \cref{pb:weak_cc} is well
    posed.
    }
\end{proof}


\subsection{\FIX{}{DFN Darcy flow - discontinuous coupling}}

We consider now the functional setting to present the weak formulation of
problem \cref{pb:dc}. Referring to \cref{eq:darcy_coupling_inters} and
\cref{eq:darcy_coupling_discontinuous}, we
assume pressure boundary conditions assigned to each ending point $\partial
\gamma_k$ by a scalar $\FIX{\hat{f}_i}{\hat{g}_i} \in \mathbb{R}$, a source term defined in
$\FIX{\hat{q}_k}{\hat{f}_k} \in L^2(\gamma_k)$, an effective tangential
permeability in $\FIX{\hat{\kappa}_k}{\hat{\lambda}_k}
\in L^\infty( \gamma_k) $, and an effective normal permeability with regularity
$\FIX{\widetilde{\kappa}_m}{\widetilde{\lambda}_m} \in  L^\infty( \gamma_k)$.
Motivated by \eg \cite{Martin2005}, we introduce a new family of spaces,
one for each fracture $\Omega_i$ by
\begin{gather*}
    W_i \defeq \left\{\bm{v} \in V_i: \bm{v}^+ \cdot T_i \bm{n}_k |_{\gamma_k} \in L^2( \gamma_k )
    \text{ and } \bm{v}^- \cdot T_i \bm{n}_k |_{\gamma_k} \in L^2( \gamma_k ),
    \forall k \in \mathcal{G}_i \right\}
\end{gather*}
and their composition for the vector fields $W \defeq
\prod_{i \in \mathcal{N}} W_i$, which are Hilbert spaces endowed with norms
\begin{gather*}
    \norm{\bm{v}}_{W_i}^2 \defeq \norm{\bm{v}}_{V_i}^2 + \sum_{k \in \mathcal{G}_i}
    \norm{\bm{v}^+ \cdot T_i \bm{n}_k}_{\gamma_k}^2 +
    \norm{\bm{v}^- \cdot T_i \bm{n}_k}_{\gamma_k}^2
    \quad \text{ and } \quad \norm{\bm{v}}_W^2 \defeq \sum_{i \in \mathcal{N} } \norm{ \bm{v}_i
}_{W_i}^2.
\end{gather*}
It is worth to notice that we require more regularity on the intersections for
$W_i$ than $V_i$, where implicitly we assume $H^{-1/2}$-regularity, in order to
properly take into account the coupling conditions
\cref{eq:darcy_coupling_discontinuous}. This assumption is related to Robin-type
boundary conditions for problem in mixed form, see \cite{Martin2005} for a more
detailed investigation.  For the intersections we introduce the functional
spaces for both scalar fields as $\hat{Q}_k \defeq L^2(\gamma_k)$, with the
usual $L^2$-norm, and vector fields as
\begin{gather*}
    \hat{V}_k \defeq \left\{ \hat{\bm{v}} \in [ L^2( \gamma_k ) ]^3: \nabla \cdot
    \hat{\bm{v}} \in
    L^2(\gamma_k) \right\}
    \quad \text{with} \quad \norm{\hat{\bm{v}}}_{\hat{V}_k}^2 \defeq
    \norm{\hat{\bm{v}}}_{\gamma_k}^2 +
    \norm{\nabla \cdot \hat{\bm{v}}}_{\gamma_k}^2.
\end{gather*}
The global spaces are $\hat{V} \defeq \prod_{k \in \mathcal{I} } \hat{V}_k$,
with norm $\norm{\hat{\bm{v}}}_{\hat{V}}^2 \defeq \sum_{k \in \mathcal{I} }
\norm{\hat{\bm{v}}_k}_{\hat{V}_k}^2$ for the vector fields, and $\hat{Q} \defeq
\prod_{k \in \mathcal{I} } \hat{Q}_k$, with norm
$\norm{\FIX{\hat{v}}{\hat{q}}}_{\hat{Q}}^2 \defeq \sum_{k \in \mathcal{I}}
\norm{\FIX{\hat{v}_k}{\hat{q}_k}}^2_{\hat{Q}_k}$ for the scalar fields. Finally
the spaces for the coupled problem are $U \defeq W \times \hat{V}$, with induced
norm from $W$ and $\hat{V}$, and $O \defeq Q \times \hat{Q}$, with induced norm
from $Q$ and $\hat{Q}$. All the aforementioned spaces are Hilbert spaces. We
introduce the bilinear forms associated with the intersections as
\begin{gather*}
    \hat{a} \left( \cdot, \cdot \right): \hat{V} \times \hat{V} \rightarrow \mathbb{R}:
    \quad \hat{a} \left( \hat{\bm{u}}, \hat{\bm{v}} \right) \defeq
    \sum_{k \in \mathcal{I}}
    \hat{a}_{k} \left( \hat{\bm{u}}_k,
    \hat{\bm{v}}_k \right), \,\,
    \hat{a}_{k} \left( \hat{\bm{u}}_k, \hat{\bm{v}}_k \right) \defeq
    \left( \FIX{\hat{\kappa}_k^{-1}}{\hat{\lambda}_k^{-1}}
    \hat{\bm{u}}_k, \hat{\bm{v}}_k \right)_{\gamma_k}
    \\
    \hat{b} \left( \cdot, \cdot \right): \hat{V} \times \hat{Q} \rightarrow \mathbb{R}:
    \quad \hat{b} \left( \hat{\bm{u}}, \FIX{\hat{v}}{\hat{q}} \right) \defeq
    \sum_{k \in \mathcal{I}}
    \hat{b}_{k} \left( \hat{\bm{u}}_k, \FIX{\hat{v}_k}{\hat{q}_k} \right),\,\,
    \hat{b}_{k} \left( \hat{\bm{u}}_k, \FIX{\hat{v}_k}{\hat{q}_k} \right) \defeq -
    \left( \nabla \cdot
    \hat{\bm{u}}_k, \FIX{\hat{v}_k}{\hat{q}_k} \right)_{\gamma_k}.
\end{gather*}
The global bilinear forms for the coupled problem are
$\alpha \left( \cdot, \cdot \right): U \times U \rightarrow
\mathbb{R}$ and $\beta \left( \cdot, \cdot \right): U \times O
\rightarrow \mathbb{R}$
\begin{gather*}
    \alpha \left( \left( \bm{u}, \hat{\bm{u}} \right), \left( \bm{v},
    \hat{\bm{v}} \right) \right)
    \defeq
    a \left( \bm{u},
    \bm{v} \right) +
    \hat{a} \left( \hat{\bm{u}}, \hat{\bm{v}} \right)+
    cc^+_1( \bm{u}, \bm{v} ) + cc^-_1( \bm{u}, \bm{v} )
    \\
    \beta \left( \left( \bm{u}, \hat{\bm{u}} \right), \left( \FIX{v}{q},
    \FIX{\hat{v}}{\hat{q}} \right)
    \right) \defeq
    b \left(
    \bm{u}, \FIX{v}{q} \right)
    +\hat{b} \left( \hat{\bm{u}}, \FIX{\hat{v}}{\hat{q}} \right)
    + cc_2( \bm{u}, \FIX{\hat{v}}{\hat{q}} )
\end{gather*}
where the bilinear forms associated with the coupling conditions are
\begin{gather} \label{eq:cc_weak}
    cc^+_1\left( \cdot, \cdot \right) : V \times V \rightarrow \mathbb{R}:
    \quad
    cc^+_1( \bm{u}, \bm{v} ) \defeq
    \sum_{i \in \mathcal{N}} \sum_{j \in \mathcal{G}_i}
    \left( \FIX{\widetilde{\kappa}_i^{-1}}{\widetilde{\lambda}_i^{-1}}
    \bm{u}_i^+ \cdot T_i \bm{n}_j, \bm{v}_i^+ \cdot T_i \bm{n}_j
    \right)_{\gamma_j}\\
    cc_2\left( \cdot, \cdot \right) : V \times \hat{Q} \rightarrow
    \mathbb{R}:\quad
    cc_2( \bm{u}, \FIX{\hat{v}}{\hat{q}} ) \defeq \sum_{i \in \mathcal{N}} \sum_{j \in
    \mathcal{G}_i}
    \left( \jump{\bm{u}_i \cdot T_i \bm{n}_j }{\gamma_j},
    \FIX{\hat{v}_j}{\hat{q}_j}\right)_{\gamma_j},
\end{gather}
$cc_1^-(\cdot, \cdot)$ follow immediatly.
The functional $B$ is extended naturally on $U$ and we define
\begin{gather*}
    \Phi: O \rightarrow \mathbb{R}:\quad \Phi \left( \left( \FIX{v}{q},
    \FIX{\hat{v}}{\hat{q}} \right)
    \right) \defeq -
    \sum_{i \in \mathcal{N}} \left( \FIX{v_i}{q_i}, \FIX{q_i}{f_i} \right)_{\Omega_i} -
    \sum_{k \in \mathcal{I}} \left( \FIX{\hat{v}_k}{\hat{q}_k},
    \FIX{\hat{q}_k}{\hat{f}_k} \right)_{\gamma_k}
\end{gather*}
\begin{problem}[Weak formulation of \cref{pb:dc}]\label{pb:weak_dc} The weak formulation of
\cref{pb:dc} is to find $(\left( \bm{u}, \hat{\bm{u}} \right), \left( p, \hat{p} \right))
\in U \times O$ such that
    \begin{gather}
        \begin{aligned}
            &\alpha  \left( \left( \bm{u}, \hat{\bm{u}} \right), \left( \bm{v},
            \hat{\bm{v}} \right) \right) + \beta \left( \left( \bm{v},
            \hat{\bm{v}} \right), \left( p, \hat{p} \right)
            \right) = B \left(\left( \bm{v}, \hat{\bm{v}} \right)
            \right)
            & \forall \left( \bm{v}, \hat{\bm{v}} \right) \in U \\
            & \beta \left( \left( \bm{u}, \hat{\bm{u}} \right),
            \left( \FIX{v}{q}, \FIX{\hat{v}}{\hat{q}} \right) \right)
            = \Phi \left( \left( \FIX{v}{q},
            \FIX{\hat{v}}{\hat{q}}  \right)\right)
            & \forall \left( \FIX{v}{q}, \FIX{\hat{v}}{\hat{q}} \right) \in O
        \end{aligned}
    \end{gather}
\end{problem}
\begin{theorem}
    \cref{pb:weak_dc} is well posed.
\end{theorem}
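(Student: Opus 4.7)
The plan is to verify the hypotheses of the Brezzi saddle-point theorem (\cite{Brezzi1991}), extending the strategy of \cref{teo:well_posed_cc}, by checking: (i) continuity of the bilinear forms $\alpha$ and $\beta$ together with the functionals $B$ and $\Phi$ on the respective spaces $U$ and $O$; (ii) coercivity of $\alpha$ on the kernel of $\beta$; and (iii) the inf-sup condition for $\beta$.

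For continuity, standard Cauchy--Schwarz estimates handle $a$, $b$, $\hat{a}$, $\hat{b}$, $B$ and $\Phi$ exactly as in the previous proof, using the $L^\infty$ bounds on $\lambda_i$, $\hat{\lambda}_k$ and $\widetilde{\lambda}_m$. Continuity of the coupling terms $cc_1^\pm$ and $cc_2$ is precisely the reason for introducing the enriched space $W$: its norm explicitly controls the two one-sided traces $\norm{\bm{v}^\pm \cdot T_i \bm{n}_k}_{\gamma_k}$, so Cauchy--Schwarz in $L^2(\gamma_k)$ yields the required bounds on $cc_1^\pm(\bm{u},\bm{v})$ and on $cc_2(\bm{u},\hat{q})$, the latter also relying on the fact that a jump is a difference of two such traces.

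To show coercivity of $\alpha$ on $\ker\beta$, I would take $(\bm{w},\hat{\bm{w}})$ satisfying $\beta((\bm{w},\hat{\bm{w}}),(q,\hat{q}))=0$ for all $(q,\hat{q})\in O$. Testing with $(q,0)$ gives $\nabla_{T_i}\cdot\bm{w}_i=0$ in each $\Omega_i$, while testing with $(0,\hat{q})$ gives $\nabla\cdot\hat{\bm{w}}_k=\sum_{m\in\{i,j\}}\jump{\bm{w}_m\cdot T_m\bm{n}_k}{\gamma_k}$ on each $\gamma_k$. Evaluating $\alpha((\bm{w},\hat{\bm{w}}),(\bm{w},\hat{\bm{w}}))$ and using the $L^\infty$ lower bounds on the inverse permeabilities yields
\begin{gather*}
\alpha((\bm{w},\hat{\bm{w}}),(\bm{w},\hat{\bm{w}})) \gtrsim \sum_{i\in\mathcal{N}}\norm{\bm{w}_i}_{\Omega_i}^2 + \sum_{k\in\mathcal{I}}\norm{\hat{\bm{w}}_k}_{\gamma_k}^2 + \sum_{i\in\mathcal{N}}\sum_{k\in\mathcal{G}_i}\bigl(\norm{\bm{w}_i^+\cdot T_i\bm{n}_k}_{\gamma_k}^2+\norm{\bm{w}_i^-\cdot T_i\bm{n}_k}_{\gamma_k}^2\bigr).
\end{gather*}
Combined with $\nabla_{T_i}\cdot\bm{w}_i=0$ and with the identity for $\nabla\cdot\hat{\bm{w}}_k$, which bounds $\norm{\nabla\cdot\hat{\bm{w}}_k}_{\gamma_k}$ by the trace terms, one recovers the full $U$-norm on the right-hand side, giving the desired coercivity on the kernel.

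The main obstacle is the inf-sup condition for $\beta$. Given $(q,\hat{q})\in O$, I would construct $(\bm{v},\hat{\bm{v}})\in U$ in two decoupled stages. First, on each intersection, solve the auxiliary problem $-\nabla\cdot\nabla\hat{\psi}_k = \hat{q}_k$ in $\gamma_k$ with Dirichlet data on the outer part of $\partial\gamma_k$ and homogeneous Neumann data on the internal tips $\partial\gamma_k^{in}$, and set $\hat{\bm{v}}_k = \nabla\hat{\psi}_k$, so that $-\nabla\cdot\hat{\bm{v}}_k=\hat{q}_k$ and $\norm{\hat{\bm{v}}_k}_{\hat{V}_k}\lesssim\norm{\hat{q}_k}_{\hat{Q}_k}$. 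Second, on each fracture, solve $-\nabla_{T_i}\cdot\nabla_{T_i}\varphi_i = q_i$ in $\Omega_i$ with Dirichlet data on $\partial\Omega_i$ but treating the intersections as internal curves of measure zero, and set $\bm{v}_i=\nabla_{T_i}\varphi_i$; continuity of $\varphi_i$ across each $\gamma_k$ then guarantees $\jump{\bm{v}_i\cdot T_i\bm{n}_k}{\gamma_k}=0$, so the cross term $cc_2(\bm{v},\hat{q})$ drops out. For partially or fully immersed intersections the reduced $H^{3/2-\epsilon}$ regularity forces the same mollification argument in the tangent plane of $\Omega_i$ used in the second half of \cref{teo:well_posed_cc} to guarantee $\bm{v}\in W$ with controllable trace norms on $\gamma_k$. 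With this choice $\beta((\bm{v},\hat{\bm{v}}),(q,\hat{q}))=\norm{q}_Q^2+\norm{\hat{q}}_{\hat{Q}}^2$ while $\norm{(\bm{v},\hat{\bm{v}})}_U\lesssim\norm{(q,\hat{q})}_O$, and the inf-sup bound follows. Applying \cite{Brezzi1991} then concludes the proof.
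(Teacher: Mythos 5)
Your proposal is correct and follows the same saddle-point framework, but the key inf-sup construction is genuinely different from the paper's. The paper builds the fracture test velocity from a mixed Dirichlet--Neumann auxiliary problem that prescribes the one-sided normal traces $\nabla_{T_i}\varphi_i^\pm\cdot T_i\bm{n}_j=\pm\hat{q}_j$ on each $\gamma_j$, so that the coupling term contributes $cc_2(\bm{v},\hat{q})=2\sum_j\norm{\hat{q}_j}_{\gamma_j}^2\ge 0$ to $\beta$, while the intersection velocity separately produces $\norm{\hat{q}}_{\hat{Q}}^2$. You instead solve the plain Poisson problem on each $\Omega_i$, ignoring $\gamma$, so that the flux of $\bm{v}_i$ is continuous across the intersections and $cc_2(\bm{v},\hat{q})$ vanishes identically; the two stages then decouple completely. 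Both yield $\beta((\bm{v},\hat{\bm{v}}),(q,\hat{q}))\gtrsim\norm{(q,\hat{q})}_O^2$ with $\norm{(\bm{v},\hat{\bm{v}})}_U\lesssim\norm{(q,\hat{q})}_O$. Your route avoids the paper's transmission problem (which, as written, is even slightly over-determined by the extra condition $\jump{\varphi_i}{\gamma_j}=0$ on top of the two one-sided Neumann data), at the price of having to justify that $\nabla_{T_i}\varphi_i$ has $L^2$ one-sided traces on the interior curves $\gamma_k$ — which you correctly address via elliptic regularity and, for immersed tips, the mollification argument of \cref{teo:well_posed_cc}; the paper's construction gets those traces in $L^2$ for free since they are prescribed equal to $\pm\hat{q}_j$. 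Your kernel-coercivity step is also slightly more careful than the paper's: testing $\beta$ with $(0,\hat{q})$ gives $\nabla\cdot\hat{\bm{w}}_k=\sum_m\jump{\bm{w}_m\cdot T_m\bm{n}_k}{\gamma_k}$ rather than zero, and you correctly absorb $\norm{\nabla\cdot\hat{\bm{w}}_k}_{\gamma_k}$ into the trace terms controlled by $cc_1^\pm$, whereas the paper simply asserts the divergence vanishes. The one point to watch in your construction is a fully immersed intersection, where the Neumann problem for $\hat{\psi}_k$ requires a compatibility condition on $\hat{q}_k$; but the paper explicitly restricts to intersections reaching the fracture boundary, so you are at the same level of generality.
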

\begin{proof}
    Using standard arguments, mainly Cauchy-Schwarz inequality, it is possible
    to prove the boundedness, and then the continuity, of the bilinear forms
    $\alpha$
    and $\beta$ on their spaces as well as the functionals $\Phi$ and $B$.
    Considering a function
    $\left( \bm{w}, \hat{\bm{w}} \right) \in U$ such that $\beta
    \left( \left( \bm{w}, \hat{\bm{w}} \right), \left( \FIX{v}{q},
    \FIX{\hat{v}}{\hat{q}} \right)
    \right) = 0$ for all $\left(\FIX{v}{q}, \FIX{\hat{v}}{\hat{q}} \right) \in O$, we have
    $\nabla_{T_i} \cdot \bm{w}_i = 0$ almost everywhere in $\Omega_i$ and $
    \nabla \cdot \hat{\bm{w}}_k = 0$ almost everywhere in $\gamma_k$.  The norm
    of $\left( \bm{w}, \hat{\bm{w}} \right)$ becomes
    \begin{gather*}
        \norm{ \left( \bm{w}, \hat{\bm{w}}\right) }_{U}^2 =
        \sum_{i \in \mathcal{N}} \norm{ \bm{w}_i }_{\Omega_i}^2 + \sum_{k \in
        \mathcal{G}_i}
        \norm{\bm{w}^+_i \cdot T_i \bm{n}_k}_{\gamma_k}^2 +
        \norm{\bm{w}^-_i \cdot T_i \bm{n}_k}_{\gamma_k}^2 +
        \sum_{k \in \mathcal{I}} \norm{ \hat{\bm{w}}_k }_{\gamma_k}^2.
    \end{gather*}
    It is possible to show that $\alpha \left( \left( \bm{w}, \hat{\bm{w}}
    \right), \left( \bm{w}, \hat{\bm{w}} \right) \right) \gtrsim \norm{ \left(
    \bm{w}, \hat{\bm{w}} \right) }_{U}^2$, thus $\alpha
    $ is coercive on the kernel of $\beta$.

    Given a function $(q,
    \hat{q} ) \in O$, taking inspiration from \cite{Martin2005, Formaggia2012}, we
    introduce the following auxiliary problems for fractures
    \begin{gather*}
        \begin{aligned}
            &-\nabla_{T_i} \cdot \nabla_{T_i} \varphi_i = q_i & \text{in }
            \Omega_i\\
            &\nabla_{T_i} \varphi_i^+ \cdot T_i \bm{n}_j =
            \hat{q}_j &
            \text{on } \gamma_j, j \in \mathcal{G}_i\\
            &\nabla_{T_i} \varphi_i^- \cdot T_i \bm{n}_j = -
            \hat{q}_j &
            \text{on } \gamma_j, j \in \mathcal{G}_i\\
            & \jump{\varphi_i}{\gamma_j} = 0 &
            \text{on } \gamma_j, j \in \mathcal{G}_i\\
            & \varphi_i = 0 & \text{on } \partial \Omega_i
        \end{aligned}
    \end{gather*}
    and for the intersections
    \begin{gather*}
        \begin{aligned}
            & - \nabla \cdot \nabla \hat{\varphi}_k = \hat{q}_k & \text{in }
            \gamma_k \\
            & \hat{\varphi}_k = 0 & \text{on } \partial \gamma_k
        \end{aligned}.
    \end{gather*}
    \FIX{}{We consider only the case where the intersection reaches the fracture
    boundary; intersection endpoints inside the fracture plane can be handled
    similar to the proof of Theorem
    \ref{teo:well_posed_cc}.}
    Following \cite{Dziuk1988} the previous problems admits a unique solution
    \FIX{}{on each disconnected part of $\Omega_i$, indicated by $\Omega_i^*$}.
    In the first case we have \FIX{$\varphi_i \in H^2(\Omega_i)$}{
    $\varphi_i^* \in H^2(\Omega_i^*)$} $\forall i$, such that \FIX{
    $\norm{\varphi_i}_{H^2(\Omega_i)} \lesssim \norm{q_i}_{\Omega_i} + \sum_{j
    \in \mathcal{G}_i}
    \norm{\hat{q}_j}_{\gamma_j}$}{$\norm{\varphi_i^*}_{H^2(\Omega_i^*)}
    \lesssim \norm{q_i}_{\Omega_i^*} + \sum_{j\in \mathcal{G}_i}
    \norm{\hat{q}_j}_{\gamma_j}$}.
    We consider now $\bm{v} \in W$ such that \FIX{$\bm{v}_i =
    \nabla_{T_i} \varphi_i$}{$\bm{v}|_{\Omega_i^*} = \nabla_{T_i} \varphi_i^*$},
    we have \FIX{$- \nabla_{T_i} \cdot \bm{v}_i =
    q_i$}{$- \nabla_{T_i} \cdot \bm{v}|_{\Omega_i^*} = q_i$ in $\Omega_i^*$},
    \FIX{$\bm{v}_i^+ \cdot T_i \bm{n}_j = \hat{q}_j$ and $\bm{v}_i^-
    \cdot T_i
    \bm{n}_j = \hat{q}_j$}{$\bm{v}|_{\Omega_i^*} \cdot T_i \bm{n}_j = \hat{q}_j$}
    for all $j \in \mathcal{G}_i$, and
    \begin{gather*}
        \norm{\bm{v}_i}_{W_i}^2 =
        \norm{\nabla_{T_i} \varphi_i}^2_{\Omega_i} + \norm{q_i}^2_{\Omega_i}
        + 2 \sum_{j \in \mathcal{G}_i} \norm{\hat{q}_j}_{\gamma_j}^2
        \lesssim \norm{q_i}_{Q_i}^2 + \sum_{j \in \mathcal{G}_i}
        \norm{\hat{q}_j}_{Q_j}^2
        \leq\norm{q}_{Q}^2 + \norm{\hat{q}}_{\hat{Q}}^2.
    \end{gather*}
    For the second family of problems we obtain the existence of
    $\hat{\varphi}_k \in H^2( \gamma_k )$ $\forall k$, such that
    $\norm{\hat{\varphi}_k}_{H^2( \gamma_k )} \lesssim \norm{
    \hat{q}_k}_{\gamma_k}$.
    Considering $\hat{\bm{v}} \in \hat{V}$ such that $\hat{\bm{v}}_k = \nabla
    \hat{\varphi}_k$, we have $- \nabla \cdot \hat{\bm{v}}_k =
    \hat{q}_k$
    and
    \begin{gather*}
        \norm{\hat{\bm{v}}}_{\hat{V}}^2 = \sum_{k \in \mathcal{I}}
        \norm{\nabla_{T_i} \hat{\varphi}_k}^2_{\gamma_k} +
        \norm{\hat{q}_k}^2_{\gamma_k}
        \lesssim \norm{\hat{q}}_{\hat{Q}}^2.
    \end{gather*}
    With this choice of $\bm{v}$ and $\hat{\bm{v}}$ we obtain the boundedness
    from below of the bilinear form $\beta$
    \begin{gather*}
        \beta \left( \left( \bm{v}, \hat{\bm{v}} \right), \left( q,
        \hat{q} \right) \right) =
        \sum_{i \in \mathcal{N}} \norm{q_i}_{Q_i}^2
        + 2\sum_{j \in \mathcal{G}_i} \norm{\hat{q}_j}_{\gamma_j}^2
        + \sum_{k \in \mathcal{I}} \norm{\hat{q}_k}_{\gamma_k}^2
        \gtrsim \norm{(q, \hat{q})}_{O} \norm{(\bm{v},\hat{\bm{v}})}_{U}.
    \end{gather*}
    Thus the inf-sup condition is fulfilled, and following \cite{Brezzi1991}
    we conclude that \cref{pb:weak_dc} is well posed.
\end{proof}





\section{\FIX{Discrete Model}{Discrete approximation}} \label{sec:discrete}


In this part we present the numerical discretization of \cref{pb:weak_cc} and
\cref{pb:weak_dc}. We extend the virtual element method for mixed problem
presented in
\cite{Brezzi2014,BeiraodaVeiga2014b,BeiraoVeiga2016} to the DFN setting.
For simplicity we start the discretization of \cref{pb:weak_cc} considering only
the network composed by a single fracture $\Omega = \Omega_i$, the extension of more
than one fracture is trivial but requires \FIX{an}{a} heavier notation, while the
approximation of \cref{eq:darcy_coupling_cont} will be discussed below.
For realistic fracture networks, the solutions will commonly have low
regularity due to heterogeneities, and we therefore limit ourselves to lowest
order methods.


\subsection{\FIX{}{Discrete setting}}


Let us consider $\mathcal{T}(\Omega)$ a tessellation of a single fracture
$\Omega$ into non-overlapping polygons $E$ and the union of the edges $e$ as
$\mathcal{E}(\Omega) = \left\{ e \in \partial E \right\}$. We denote by $h_E$
the diameter of $E$, by $\bm{x}_E$ the centre of $E$, by $\bm{n}_e$ with $e \in
\mathcal{E}(E)$ the unit normal of $e$ pointing outward with respect to the
internal part of $E$, and by $h$ the maximum diameter in $\mathcal{T}(\Omega)$.
When there is no room for confusion we write $\mathcal{T}$ instead of
$\mathcal{T}(\Omega)$ and $\mathcal{E}$ instead of $\mathcal{E}(\Omega)$. We
assume that there exist $\rho_E \in \mathbb{R}^+$ such that $E$ is star-shaped
with respect to every point of a disc of radius $\rho_E h_E$ and $\abs{e} \geq
\rho_E h_E$, for all $e \in \mathcal{E}(E)$. An example of elements is presented
in \cref{fig:example_cells}.
\begin{figure}[tbp]
    \centering
    \resizebox{0.35\textwidth}{!}{\fontsize{30pt}{8}\selectfont%
\begingroup%
  \makeatletter%
  \providecommand\color[2][]{%
    \errmessage{(Inkscape) Color is used for the text in Inkscape, but the package 'color.sty' is not loaded}%
    \renewcommand\color[2][]{}%
  }%
  \providecommand\transparent[1]{%
    \errmessage{(Inkscape) Transparency is used (non-zero) for the text in Inkscape, but the package 'transparent.sty' is not loaded}%
    \renewcommand\transparent[1]{}%
  }%
  \providecommand\rotatebox[2]{#2}%
  \ifx\svgwidth\undefined%
    \setlength{\unitlength}{563.86801758bp}%
    \ifx\svgscale\undefined%
      \relax%
    \else%
      \setlength{\unitlength}{\unitlength * \real{\svgscale}}%
    \fi%
  \else%
    \setlength{\unitlength}{\svgwidth}%
  \fi%
  \global\let\svgwidth\undefined%
  \global\let\svgscale\undefined%
  \makeatother%
  \begin{picture}(1,0.94147246)%
    \put(0,0){\includegraphics[width=\unitlength,page=1]{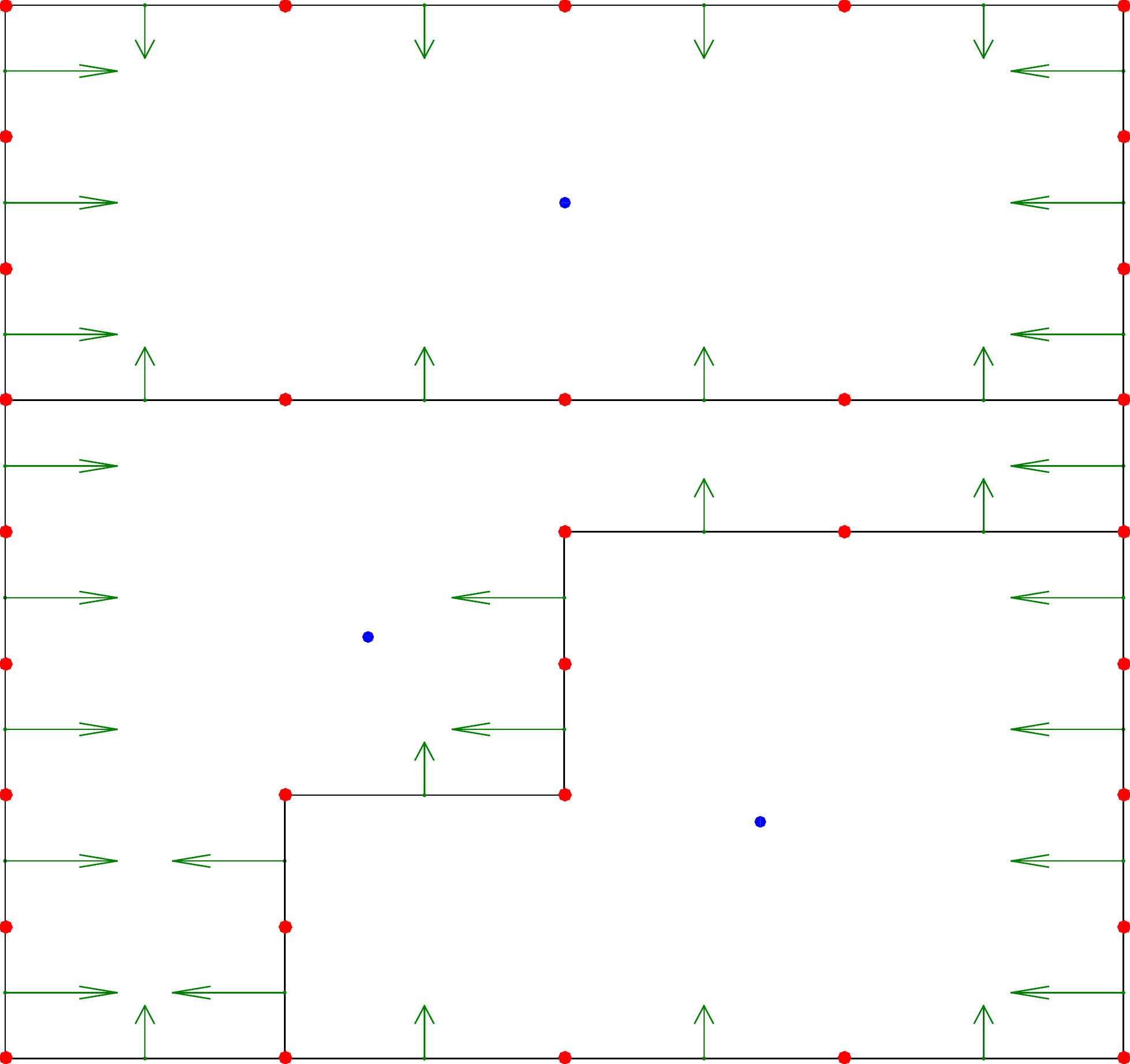}}%
    \put(0.46129512,0.78054506){\color[rgb]{0,0,0}\makebox(0,0)[lb]{\smash{$E_1$}}}%
    \put(0.28400788,0.39815937){\color[rgb]{0,0,0}\makebox(0,0)[lb]{\smash{$E_2$}}}%
    \put(0.62435359,0.22798643){\color[rgb]{0,0,0}\makebox(0,0)[lb]{\smash{$E_3$}}}%
  \end{picture}%
\endgroup%
    }%
    \caption{Example of three elements admitted in the discretization. The blue
    points represent the centre $\bm{x}_E$ for each element $E$, the red points define
    the edges $e$, and the green arrows are the normal vector to the edges. $E_1$ is a
    rectangular element but composed of 14 edges and $E_2$ is a non-convex
    element.}%
    \label{fig:example_cells}
\end{figure}
We introduce the
following local spaces for the element $E$ of $\mathcal{T}$: for the scalar
fields $Q_h(E) \defeq \left\{ \FIX{v}{q} \in Q(E): \FIX{v}{q} \in \mathbb{P}_0(E)
\right\}$ and for the vector fields
\begin{gather*}
    V_h(E) \defeq \left\{ \bm{v} \in V(E): \bm{v} \cdot \bm{n}_{e_i} |_{e_i} \in
    \mathbb{P}_0(e_i) \forall e_i \in \mathcal{E}(E), \nabla \cdot \bm{v}
    \in \mathbb{P}_0(E), \nabla \times \bm{v} = \bm{0} \right\}.
\end{gather*}
It is worth to notice that we do not reconstruct the velocity inside an element,
since in most of the applications the normal flux from a face is the physical
quantity of interest. \FIX{However $\bm{v}$ can be explicitly computed using the last
request of $V_h(E)$.}{} The global spaces for the fracture $\Omega$ are defined by
\begin{gather*}
    Q_h(\Omega) \defeq \left\{ \FIX{v}{q} \in Q(\Omega): \FIX{v|_E}{q|_E}
    \in Q_h(E) \, \forall E \in
    \mathcal{T} \right\} \\
    V_h(\Omega) \defeq \left\{ \bm{v} \in V(\Omega): \bm{v}|_E \in V_h(E) \, \forall E \in
    \mathcal{T} \right\}.
\end{gather*}
The degrees of freedom, simply \dofs\!\!, for $Q_h(\Omega)$ are constant value
in each element of the mesh, while for $\bm{v} \in V_h(\Omega)$ we consider
\begin{gather*}
    \left( \bm{v} \cdot \bm{n}_e, q \right)_e \quad \forall e \in \mathcal{E}
    \text{ and } \forall q \in \mathbb{P}_0(e).
\end{gather*}
We indicate by $N_{\dof} = \sharp \mathcal{E}(E)$ the number of \dofs for a generic element $E$.
To simplify the presentation, we construct the approximation
locally to each element $E \in \mathcal{T}$ and then use the fact that the
velocity \dofs are single valued for all $e \in \mathcal{E}$, the global
approximation can be build up. When it is
clear from the context, we consider the bilinear form introduced in the previous
section restricted to an element of the mesh.  For the element $E \in
\mathcal{T}$ we define the local canonical base for both spaces: for $Q_h(E)$
is trivial, while for $V_h(E)$ we indicate with $\base(V_h(E)) = \left\{
\bm{\varphi}_\omega \right\}_{\omega=1}^{N_\dof}$ such that $\bm{\varphi}_\omega \in V_h(E)$
and given $\bm{v} \in V_h(E)$ we have $\bm{v} = \sum_{\omega=1}^{N_\dof}
[\bm{v}]_\omega
\bm{\varphi}_\omega$, with $[\bm{v}]_\omega \in \mathbb{R}$. As a consequence of these
facts we have $\left( \bm{\varphi}_\omega \cdot \bm{n}_{e_j}, 1 \right)_{e_j} =
\delta_{\omega j}$ which means $\abs{e_\omega} \bm{\varphi}_\omega \cdot \bm{n}_{e_\omega} =
1$, as
well as $\left( \nabla \cdot \bm{\varphi}_\omega, 1 \right)_E = 1$ which means
$\abs{E}\nabla \cdot \bm{\varphi}_\omega = 1$.
With these choices, it is possible
to compute directly the bilinear form $b(\cdot, \cdot)$ as well as the
functionals $F(\cdot)$ and $B(\cdot)$. However the computation of $a(\cdot,
\cdot)$ is more tricky and requires the introduction of a projection operator
$\Pi_0$, which gives an approximation of $a(\cdot, \cdot) \approx a_h(\cdot,
\cdot)$. We will see that the explicit evaluation of the basis functions
$\bm{\varphi}_i$ is not needed in the internal part of $E$, but the request that
the order of convergence is preserved when using $a_h(\cdot, \cdot)$ instead of
$a(\cdot, \cdot)$. We introduce the following local space
\begin{gather*}
    {\mathcal{V}}(E) \defeq \left\{ \bm{v} \in V_h(E): \bm{v} =
    \FIX{\kappa}{\lambda} \nabla v,
    \text{ for } v \in \mathbb{P}_1(E) \right\},
\end{gather*}
where $\nabla$ stands for the tangential gradient on the current fracture.
We define the projection operator $\Pi_0: V(E) \rightarrow \mathcal{V}(E)$ such
that $a( \bm{v} - \Pi_0 \bm{v}, \bm{w} ) = 0 $ for all $\bm{w} \in
\mathcal{V}(E)$. To introduce the discrete bilinear forms, we start by having
for $\bm{u}, \bm{v} \in V_h(E)$
\begin{gather} \label{eq:a_split}
    a( \bm{u}, \bm{v} ) = a( \Pi_0 \bm{u}, \Pi_0 \bm{v} ) + a( (I-\Pi_0) \bm{u},
    (I-\Pi_0) \bm{v} ),
\end{gather}
since the cross terms are zero due to $\Pi_0$.


\subsection{\FIX{}{Computation of the local $H_{\rm div}$-mass matrix}}


We focus now on the approximation of the first term \FIX{}{in \eqref{eq:a_split}}.
Since $\Pi_0 \bm{u} \in
V_h(E)$, it can be expanded in the canonical base $\Pi_0 \bm{u} = \sum_{\omega =
1}^{N_\dof} [\Pi_0 \bm{u}]_\omega \bm{\varphi}_\omega$ with $\bm{\varphi}_\omega
= \FIX{\kappa}{\lambda} \nabla \varphi_\omega$, $\varphi_\omega \in \mathbb{P}_1(E)$.  The
trial function $\Pi_0 \bm{v} \in \mathcal{V}$ then $\exists v \in \mathbb{P}_1(E)$
such that $\Pi_0 \bm{v} = \FIX{\kappa}{\lambda} \nabla v$.  Moreover we can write
$\varphi_\omega$, as well as $v$, using a monomial expansion as
\begin{gather*}
    \varphi_\omega = \sum_{m=1}^2 s_\omega^m m_m
    \quad \text{with} \quad m_m( \bm{x} ) \defeq \dfrac{[\bm{x}]_m -
    [\bm{x}_E]_m}{h_E} \quad \text{and} \quad s_\omega^m \in \mathbb{R}.
\end{gather*}
It is worth to notice that the coordinates in the definition of the monomials
are defined in the tangential space of $\Omega$. The reason why $m_0(\bm{x}) = 1$
is not included will become clear below. We obtain for $\omega$
\begin{gather*}
    \left( \FIX{\kappa^{-1}}{\lambda^{-1}} \bm{\varphi}_\omega, \bm{v}
    \right)_E = - \left( \nabla \cdot
    \bm{\varphi}_\omega, v \right)_E + \sum_{e \in \mathcal{E}(E)} \left(
    \bm{\varphi} \cdot \bm{n}_e, v \right)_e = - \dfrac{1}{\abs{E}} \left(1,
    v\right)_E  + \dfrac{1}{\abs{e_\omega}}\left( 1, v\right)_{e_\omega},
\end{gather*}
which is computable using the monomial expansion of $v$. \FIX{It is possible to use
directly the monomial expansion, for both $\varphi_\omega$ and $v$, and}
{Indeed, by using the monomial expansion for both $\varphi_\omega$ and $v$, we} obtain
\begin{gather*}
    \left( \FIX{\kappa^{-1}}{\lambda^{-1}} \bm{\varphi}_\omega,
    \FIX{\kappa}{\lambda} \nabla m_j \right)_E =
    \left( \nabla \varphi_\omega,  \FIX{\kappa}{\lambda} \nabla m_j \right)_E =
    \sum_{i=1}^2 s_\omega^i \left( \nabla m_i, \FIX{\kappa}{\lambda}
    \nabla m_j \right)_E
    \quad \text{for} \quad j =1,2
\end{gather*}
the scalar product in the previous expression is computable. Defining the
matrices and vectors $G \in \mathbb{R}^{2\times 2}$, $\bm{f}_\omega,
\bm{s}_\omega \in \mathbb{R}^2$, $F \in \mathbb{R}^{2 \times N_\dof}$, and
$\Pi^* \in \mathbb{R}^{2 \times N_\dof}$ as
\begin{gather*}
    [G]_{ij} \defeq \left( \FIX{\kappa}{\lambda} \nabla m_i, \nabla m_j \right)_E, \quad
    [\bm{f}_\omega]_i \defeq - \dfrac{1}{\abs{E}} \left(1,
    m_i\right)_E  + \dfrac{1}{\abs{e_\omega}}\left( 1, m_i\right)_{e_\omega},\\
    \bm{s}_\omega \defeq [s^1_\omega, s^2_\omega ]^\top = G^{-1} \bm{f}_\omega, \quad
    F \defeq [ \bm{f}_1 | \ldots | \bm{f}_{N_\dof}], \quad
    \Pi^* \defeq G^{-1} F,
\end{gather*}
we have $s_\omega^k = [\Pi^*]_{\omega k}$. \FIX{It is now possible
to given the final expression of the term}{The final expression thus reads}
\begin{gather*}
    a( \Pi_0 \bm{\varphi}_\omega, \Pi_0 \bm{\varphi}_\theta ) = \sum_{i =1
    }^{N_\dof} \sum_{j=1}^{N_\dof} s_\omega^i s_\theta^j \left(
    \FIX{\kappa}{\lambda} \nabla
    m_i, \nabla m_j \right)_E = \sum_{i = 1
    }^{N_\dof} \sum_{j=1}^{N_\dof} [\Pi^*]_{\omega i} [G]_{ij} [\Pi^*]_{\theta
    j}=\\ =[ (\Pi^*)^\top  G \Pi^* ]_{\omega \theta}.
\end{gather*}
\FIX{}{For the second, stability, term in \eqref{eq:a_split},}
we follow \cite{Brezzi2014,BeiraodaVeiga2014b,BeiraoVeiga2016} and approximate the term as
\begin{gather*}
    a( (I-\Pi_0) \bm{\varphi}_\omega, (I-\Pi_0) \bm{\varphi}_\theta ) \approx
    s \left( (I-\Pi_0) \bm{\varphi}_\omega,
    (I-\Pi_0) \bm{\varphi}_\theta  \right)  \defeq \\ \defeq \varsigma
    \sum_{i=1}^{N_\dof} \dof_i\left( (I-\Pi_0) \bm{\varphi}_\omega \right)
    \dof_i\left( (I-\Pi_0) \bm{\varphi}_\theta \right),
\end{gather*}
where $s(\cdot, \cdot): V_h(E) \times V_h(E) \rightarrow \mathbb{R}$ is the
bilinear form associated to the stabilization and $\varsigma \in \mathbb{R}$ is a
suitable parameter that will be explained later on.  We give an expression of
$\dof_i\left( (I-\Pi_0) \bm{\varphi}_\omega \right)$. Since $\mathcal{V} \subset
V_h(E)$ we expand each projected element of the canonical base on the canonical
base itself, \ie $\Pi_0 \bm{\varphi}_\omega = \sum_{i=1}^{N_\dof} \pi_{\omega}^i
\bm{\varphi}_i $ for $\omega=1,\ldots,N_\dof$ with $\pi_\omega^j = \dof_j( \Pi_0
\bm{\varphi}_\omega )$, which is also
\begin{gather*}
    \Pi_0 \bm{\varphi}_\omega = \sum_{j=1}^2 s_\omega^j \FIX{\kappa}{\lambda}
    \nabla m_j =
    \sum_{j=1}^2
    s_\omega^j \sum_{i=1}^{N_\dof} \dof_i(\FIX{\kappa}{\lambda}
    \nabla m_j) \bm{\varphi}_i =
    \sum_{i=1}^{N_\dof}
    \left( \sum_{j=1}^2 s_\omega^j \dof_i( \FIX{\kappa}{\lambda}
    \nabla m_j) \right)
    \bm{\varphi}_i,
\end{gather*}
obtaining $\pi_\omega^i$ equal to the term in the brackets. This is computable
introducing the matrix $D \in \mathbb{R}^{2\times 2}$, with $[D]_{ij} \defeq
\dof_i (\FIX{\kappa}{\lambda} \nabla m_j)$, as $\pi_\omega^i = [D \Pi^*]_{i\omega}$. For the
stabilization we finally obtain $s \left( (I-\Pi_0) \bm{\varphi}_\omega,
(I-\Pi_0) \bm{\varphi}_\theta  \right) = \varsigma [ ( I -
D\Pi^*)^\top ( I - D \Pi^*)]_{\omega \theta}$. Introducing the bilinear form
$a_h(\cdot, \cdot): V_h(E) \times V_h(E) \rightarrow \mathbb{R}$ as
\begin{gather*}
    a_h( \bm{\varphi}_\omega, \bm{\varphi}_\theta) \defeq a\left(
    \Pi_0\bm{\varphi}_\omega, \Pi_0 \bm{\varphi}_\theta\right) +
    s\left( (I-\Pi_0) \bm{\varphi}_\omega, (I-\Pi_0) \bm{\varphi}_\theta \right)
    = \\ = \left[ (\Pi^*)^\top  G \Pi^* + \varsigma ( I -
    D\Pi^*)^\top ( I - D \Pi^*) \right]_{\omega \theta},
\end{gather*}
the local approximation of \cref{pb:weak_cc} for a single fracture is
\begin{problem}[Local discrete formulation of \cref{pb:weak_cc}]\label{pb:disc_weak_cc}
    the discrete approximation of the weak problem in $E$ is find $\left(\bm{u},
    p \right) \in V_h(E) \times Q_h(E)$ such that
    \begin{gather*}
        \begin{aligned}
            &a_h \left( \bm{u}, \bm{\varphi}_\theta \right) +
            b\left( p, \bm{\varphi}_\theta \right) = - \left( \bm{\varphi}_\theta
            \cdot \bm{n}_{\partial \Omega}, \FIX{f}{g} \right)_{\partial \Omega \cap
            e_\theta}
            \quad \forall \bm{\varphi}_\theta \in \base(V_h(E)) \\
            &b \left( 1, \bm{u} \right) = - (\FIX{q_i}{f_i}, 1)_E
        \end{aligned}.
    \end{gather*}
\end{problem}


\subsection{\FIX{}{Fracture intersection}}


We consider two intersecting fractures $\Omega_i$ and $\Omega_j$ such that
$\gamma_k = \Omega_i \cap \Omega_j$. The general case is
extendible with an analogous procedure. We discretize
$\gamma_k$ as the union of consecutive edges of $\mathcal{E}(\Omega_i) \cap
\mathcal{E}(\Omega_j)$, indicated by $\mathcal{T}(\gamma_k)$. For each fracture
we double the velocity \dofs in $\mathcal{T}(\gamma_k)$. We enforce
\cref{eq:darcy_coupling_cont} by using Lagrange multipliers along
$\mathcal{T}(\gamma_k)$, one for each edge involved.

If two intersecting fractures are present, then the coupled model \cref{pb:dc}
can be considered, and we need to introduce a proper
discretization also for the intersection as well as for the coupling condition
\cref{eq:darcy_coupling_discontinuous}. The procedure for the former is similar
to the derivation of the discrete system for the fractures but in a
mono-dimensional framework, \ie in $\mathcal{T}(\gamma)$. For simplicity we
consider a single two-codimensional object indicated by $\gamma$, the
extension to multiple intersections is trivial but requires \FIX{an}{a} heavier notation.
The approximation of \cref{eq:two_codimensional} will be discussed below.
We consider the discrete
spaces $\hat{Q}_h(E) \defeq \left\{ \FIX{\hat{v}}{\hat{q}} \in \hat{Q}(E): \,
\FIX{\hat{v}}{\hat{q}} \in
\mathbb{P}_0(E) \right\}$ with $E \in \mathcal{T}(\gamma)$ for the pressure,
for the velocity we have
\begin{gather*}
    \hat{V}_h(E) \defeq \left\{ \hat{\bm{v}} \in \hat{V}(E): \hat{\bm{v}} \cdot
    \bm{n}_{e_i} |_{e_i} \in
    \mathbb{R} \,\forall e_i \in \mathcal{E}(E), \nabla \cdot \hat{\bm{v}}
    \in \mathbb{P}_0(E), \nabla \times \hat{\bm{v}} = \bm{0} \right\},
\end{gather*}
where $\bm{n}_{e_i}$ is the unit tangential vector of $\gamma$ which point
outward from $E$.
The global spaces $\hat{Q}_h(\gamma)$ and $\hat{V}_h(\gamma)$ follow naturally.
The \dofs for $\hat{Q}_h(\Omega)$ are piece-wise
constant in each element of $\mathcal{T}(\gamma)$, while for $\hat{\bm{v}}
\in \hat{V}_h(\gamma)$, given an ending point $e$ of an element $E$, we consider
$\hat{\bm{v}} \cdot \bm{n}_e |_e$. Also in this case, we introduce a projection operator
$\hat{\Pi}_0: \hat{V}(E) \rightarrow \hat{\mathcal{V}}(E)$ such that $\hat{a}(
\hat{\bm{v}} - \hat{\Pi}_0 \hat{\bm{v}}, \hat{\bm{w}} ) =0 $ for all
$\hat{\bm{w}} \in \hat{\mathcal{V}}(E)$ and where
\begin{gather*}
    \hat{\mathcal{V}}(E) \defeq \left\{ \hat{\bm{v}} \in \hat{V}_h(E): \hat{\bm{v}} =
    \FIX{\hat{\kappa}}{\hat{\lambda}} \nabla \hat{v},
    \text{ for } \hat{v} \in \mathbb{P}_1(E) \right\}.
\end{gather*}
Following the same process described for an element belonging to a fracture
mesh, we obtain the local matrix formulation for the $H_{\rm div}$-mass matrix
\begin{gather*}
    \hat{a}( \hat{\bm{\varphi}}_\omega, \hat{\bm{\varphi}}_\theta ) \approx
    \hat{a}_h( \hat{\bm{\varphi}}_\omega, \hat{\bm{\varphi}}_\theta ) =
    \left[
    ( \hat{\Pi}^* )^\top \hat{G} \hat{\Pi}^* + \hat{\varsigma} ( I - \hat{D}
    \hat{\Pi}^* )^\top ( I - \hat{D} \hat{\Pi}^* )
    \right]_{\omega \theta}
\end{gather*}
where $\hat{\bm{\varphi}}_\omega, \hat{\bm{\varphi}}_\theta$ are elements of the
base for $\hat{V}_h (E)$, $\hat{\varsigma} \in \mathbb{R}$ is a proper
stabilization parameter, and $\hat{\Pi}^*, \hat{G}$, and $\hat{D}$ are suitable
matrices. The approximation of the $\hat{b}(\cdot, \cdot)$ bilinear form follows from the
definition of the \dofs for the velocity and the pressure.
The discrete formulation of the coupling condition presented in
\cref{eq:cc_weak} can be derived using the \dofs introduced previously. In
particular, for each intersection $\gamma_j$ of a fracture $\Omega_i$, we double
the \textit{d.o.f.} associated
with the velocity obtaining the discrete form of $\bm{u}^+ \cdot T_i \bm{n}_j
|_{\gamma_j}$ and $\bm{u}^- \cdot T_i \bm{n}_j |_{\gamma_j}$. With this choice
the implementation of \cref{eq:cc_weak} is immediate.

In the case where two intersections meet in a point $\xi$, coupling conditions
\cref{eq:two_codimensional} should be adopted. For each intersection
we double the reduced velocity \dofs in $\xi$ and we enforce
the condition by using Lagrange multipliers in $\xi$.


\subsection{\FIX{}{Stabilization term}}


To conclude, we discuss now the stabilization parameters $\varsigma$ and
$\hat{\varsigma}$ introduced previously. Following
\cite{Brezzi2014,BeiraodaVeiga2014b,BeiraoVeiga2016}, to obtain a proper error
decay we require that exist $\iota_*, \iota^*, \hat{\iota}_*, \hat{\iota}^* \in
\mathbb{R}^+$, independent from the discretization size, such that
\begin{gather} \label{eq:stabilization}
    \begin{gathered}
        \iota_* a( \Pi_0 \bm{v}, \Pi_0 \bm{v} ) \leq
        s( (I-\Pi_0) \bm{v}, (I-\Pi_0) \bm{v} ) \leq \iota^* a( \Pi_0 \bm{v}, \Pi_0
        \bm{v} ) \qquad \forall \bm{v} \in \FIX{V_h}{V}(\Omega)\\
        \hat{\iota}_* \hat{a}( \Pi_0 \hat{\bm{v}}, \Pi_0 \hat{\bm{v}} ) \leq
        \hat{s}( (I-\hat{\Pi}_0) \hat{\bm{v}}, (I-\hat{\Pi}_0) \hat{\bm{v}} )
        \leq \hat{\iota}^*
        \hat{a}( \hat{\Pi}_0 \hat{\bm{v}}, \hat{\Pi}_0
        \hat{\bm{v}} ) \qquad \forall \hat{\bm{v}} \in \FIX{\hat{V}_h}{\hat{V}}(\Omega)
    \end{gathered}
\end{gather}
The stability term introduced previously for the fractures fulfils automatically the
request, however for highly heterogeneous fractures in the permeability a proper
scaling is recommended, \eg $\varsigma_i =
\norm{\FIX{\kappa^{-1}_i}{\lambda^{-1}_i}}_{L^\infty(\Omega_i)}$ for
each fracture $\Omega_i$.
To fulfil the second request in \cref{eq:stabilization} we can compute
explicitly the local matrices involved. Given a segment $E$ of length $h_E$ and
supposing that the effective permeability $\FIX{\hat{\kappa}}{\hat{\lambda}}$
is constant in $E$, we obtain
\begin{gather*}
    ( \hat{\Pi}^* )^\top \hat{G} \hat{\Pi}^* = \dfrac{h_E}{4
    \FIX{\hat{\kappa}}{\hat{\lambda}}}
    \begin{bmatrix}
        1 & -1 \\ -1 & 1
    \end{bmatrix}
    \quad \text{and} \quad
    ( I - \hat{D} \hat{\Pi}^* )^\top ( I - \hat{D} \hat{\Pi}^* ) = \dfrac{1}{2}
    \begin{bmatrix}
        1 & 1 \\ 1 & 1
    \end{bmatrix}.
\end{gather*}
We see that choosing $\hat{\varsigma} = h_E / \FIX{\hat{\kappa}}{\hat{\lambda}}$
is enough to scale properly the stability term for the intersection.

\begin{remark}
    It is important to note that if $\bm{u} = \Pi_0 \bm{u}$ or $\bm{u}= \Pi_0
    \bm{v}$ we have the identity: $a_h(\bm{u}, \bm{v}) = a(\bm{u}, \bm{v})$. The
    same is valid for the intersection flow.
\end{remark}


%


\section{The computational grid} \label{sec:coarsening}


Realistic fracture networks can have a highly complex geometry, and
correspondingly the construction of the computational grid is challenging.  In
particular, intersections between fractures add complexity, since for the types
of discretization considered herein, intersections are treated as constraints in
the gridding algorithm.  This often leads to a high number of cells, and
depending on intersection geometry, also low quality elements.  The requirement
that the grid conforms to intersections can be avoided by using specialized
numerical methods, however their implementation tends to be tedious.

Since the virtual element method can handle almost any polygon type, we make two modifications
of what can be considered a broadly used DFN meshing
algorithm, both aimed at alleviating the computational cost. Other approaches can
be found in \cite{Mustapha2007,Hyman2014,Karimi-Fard2016}, to name a few.
Consider the DFN composed by two intersecting fractures, $\Omega_i$ and
$\Omega_j$, presented in \cref{fig:mesh_two_fract}, the extension to several
fractures is straightforward.
\begin{figure}[tbp]
    \centering
    \resizebox{0.33\textwidth}{!}{\fontsize{20pt}{7.2}\selectfont%
\begingroup%
  \makeatletter%
  \providecommand\color[2][]{%
    \errmessage{(Inkscape) Color is used for the text in Inkscape, but the package 'color.sty' is not loaded}%
    \renewcommand\color[2][]{}%
  }%
  \providecommand\transparent[1]{%
    \errmessage{(Inkscape) Transparency is used (non-zero) for the text in Inkscape, but the package 'transparent.sty' is not loaded}%
    \renewcommand\transparent[1]{}%
  }%
  \providecommand\rotatebox[2]{#2}%
  \ifx\svgwidth\undefined%
    \setlength{\unitlength}{253.68000421bp}%
    \ifx\svgscale\undefined%
      \relax%
    \else%
      \setlength{\unitlength}{\unitlength * \real{\svgscale}}%
    \fi%
  \else%
    \setlength{\unitlength}{\svgwidth}%
  \fi%
  \global\let\svgwidth\undefined%
  \global\let\svgscale\undefined%
  \makeatother%
  \begin{picture}(1,0.78851547)%
    \put(0,0){\includegraphics[width=\unitlength,page=1]{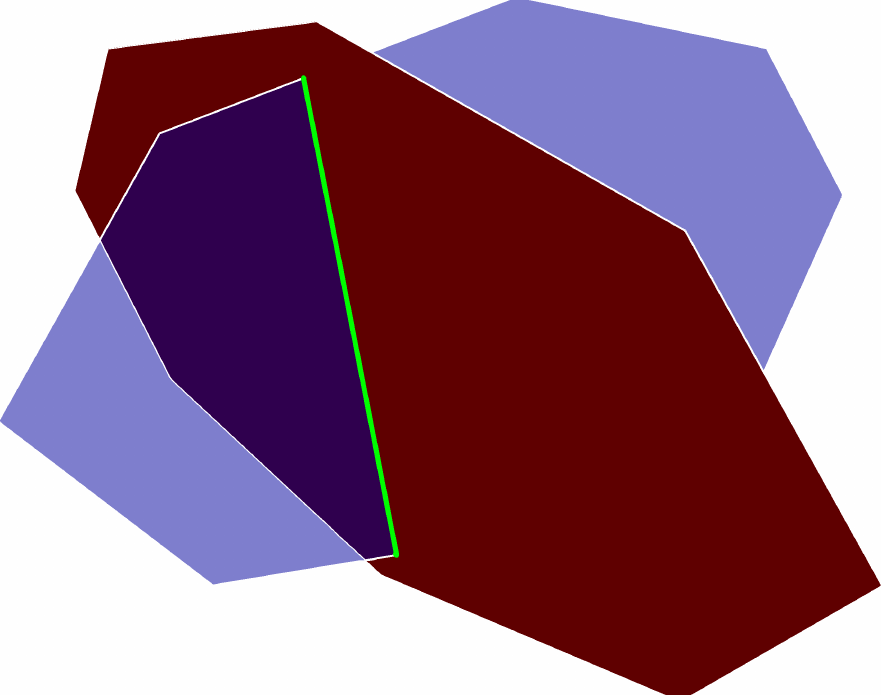}}%
    \put(0.58239336,0.20693578){\color[rgb]{1,1,1}\makebox(0,0)[lb]{\smash{$\Omega_i$}}}%
    \put(0.71304978,0.64739595){\color[rgb]{1,1,1}\makebox(0,0)[lb]{\smash{$\Omega_j$}}}%
    \put(0.38652614,0.49767331){\color[rgb]{1,1,1}\makebox(0,0)[lb]{\smash{$\gamma_k$}}}%
  \end{picture}%
\endgroup%
    }%
    \caption{The geometry of the two fractures and their intersection marked in
    light green, which are used in this section to present the meshing and
    coarsening strategy.}%
    \label{fig:mesh_two_fract}
\end{figure}
Our first modification is to build each mesh fracture separately and then link
them together through the intersection $\gamma_k$.
\FIX{The mesh of each fracture is created using}{This reduces the meshing
problem to a set of decoupled 2d domain with internal constraints from the
intersection lines, and allows us to apply established 2d meshing
software without adaptation for the coupling between domains. We have applied}
the library Triangle, see \cite{Shewchuk1996}, which is a fast and robust
triangular grid generator that allows for internal constraints. See
\cref{fig:mesh_single} on the left as an example.  The independent
meshing also means the fractures can be meshed in parallel.
\begin{figure}[tbp]
    \centering
    \includegraphics[width=0.475\textwidth]{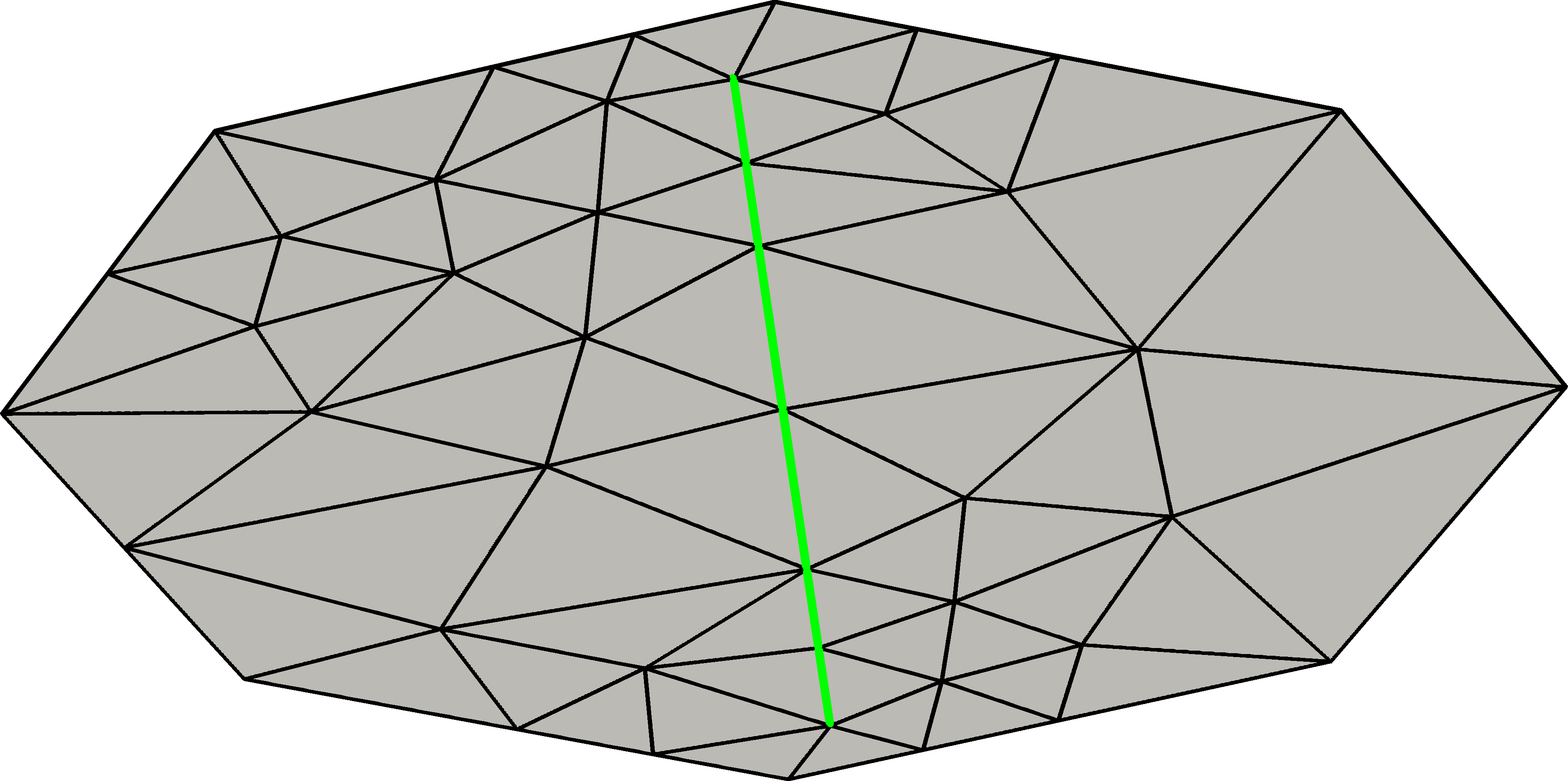}%
    \hfill
    \includegraphics[width=0.475\textwidth]{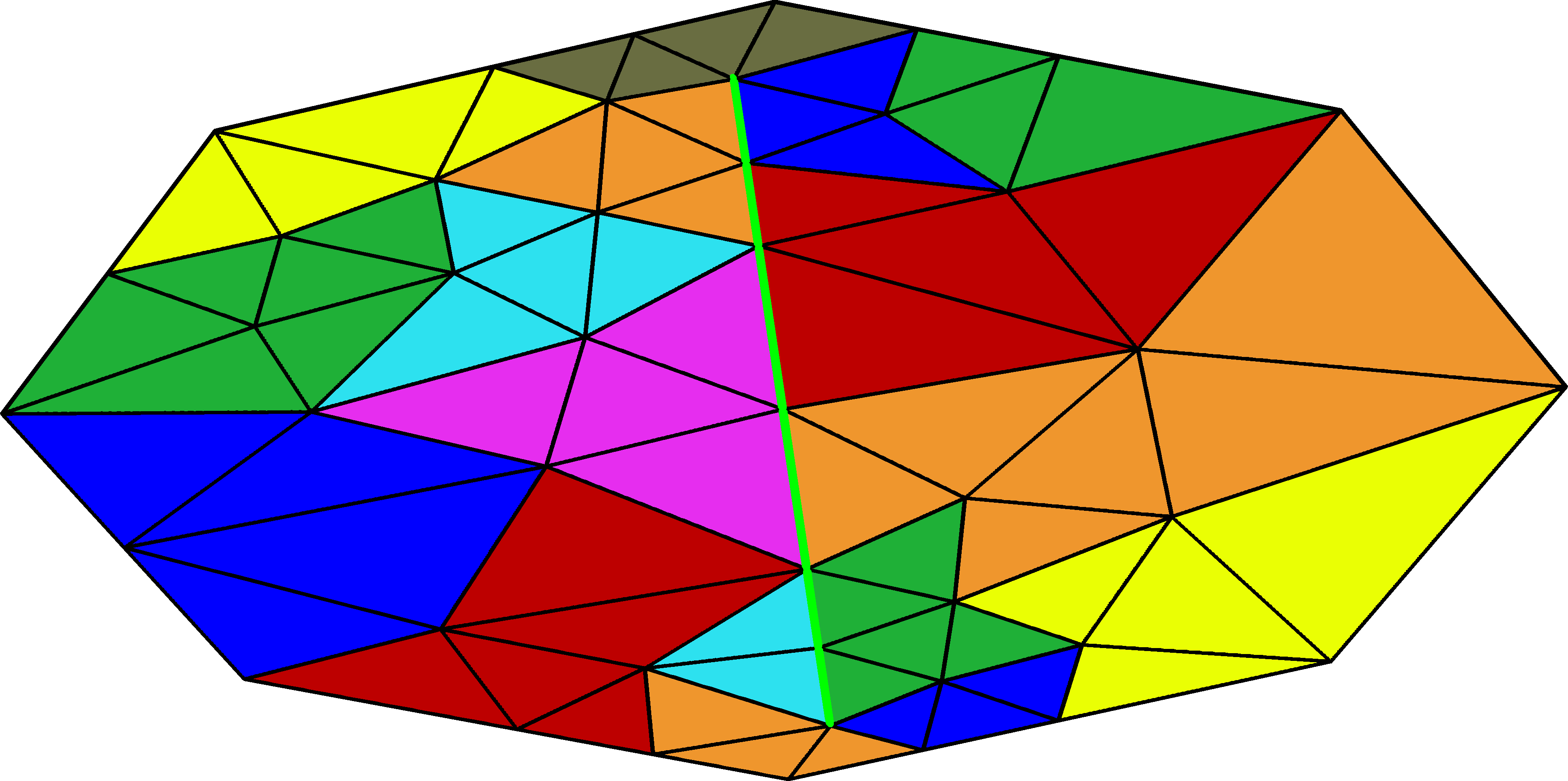}%
    \caption{Left: a triangulated fracture where the intersection is
    preserved. In general the conformity of the triangles across the
    intersection is not preserved. Right: Grouping of triangular cells
    after coarsening.  Clusters of triangles with the same colour will form
    a cell in the coarse mesh.}%
    \label{fig:mesh_single}
\end{figure}

When a mesh of a fracture is created, our second modification is to coarsen
the grid by cell agglomeration so as to lighten the total computational cost.
Our approach is motivated by the coarsening in algebraic multigrid methods,
 \eg \cite{Trottenberg2001}, and the coarsening is carried out
independently for each fracture.
As a measure of connectedness between, we consider a two-point flux approximation (TPFA)
discretization of \cref{eq:darcy_frac}, and denote the discretization matrix $A$.
To preserve fracture intersections in the coarse grid, we treat intersections
as a boundary for the TPFA discretization, and we explicitly prohibit a coarse
cell to cover both sides of the end of an intersection.
The coarsening ratio is determined by the parameter $c_{depth}$.
Details on the coarsening algorithm can be found in
\cref{sec:coarseapp}, or in the
literature on algebraic multigrid methods, see \eg \cite{Trottenberg2001}.

\cref{fig:mesh_single} on the right shows the clustering map after the
application of the algorithm.  We clearly see that the resulting elements may be
concave or even non star-shaped (but a finite union of star-shaped), \eg the big
orange element on the right part of the mesh.  It is worth to notice that the
process of creating and coarsening the meshes is embarrassingly parallel.  Once
the mesh of each fracture is created, we simply compute a co-refinement of the
edges lying on $\gamma_k$ from both fractures.  Splitting the edges of the
fractures at the intersection and considering a coherent numeration of the
elements and edges the global mesh is build up. Refer to
\cref{fig:mesh_two_fract_mesh} as an example of final mesh with and without the
coarsening strategy.  The effect of the parameter $c_{depth}$ is illustrated in
\cref{fig:iso_coarse_mesh}.  We also note that in anisotropic media the coarse
grid will to some extent adapt to the preferential flow directions, see
\cref{fig:ani_coarse_mesh}.
\begin{figure}[tbp]
    \centering
    \includegraphics[width=0.475\textwidth]{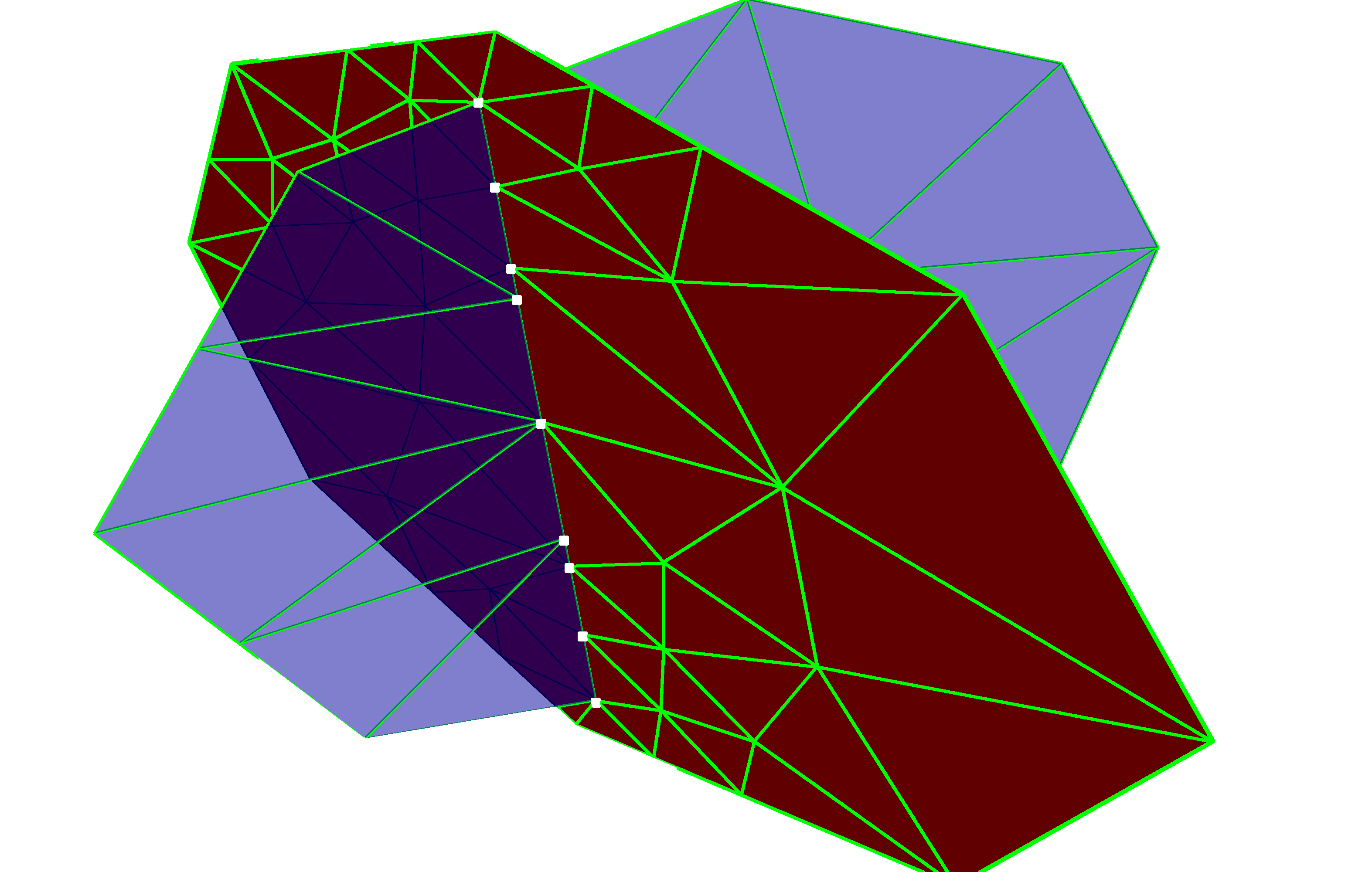}%
    \hfill
    \includegraphics[width=0.475\textwidth]{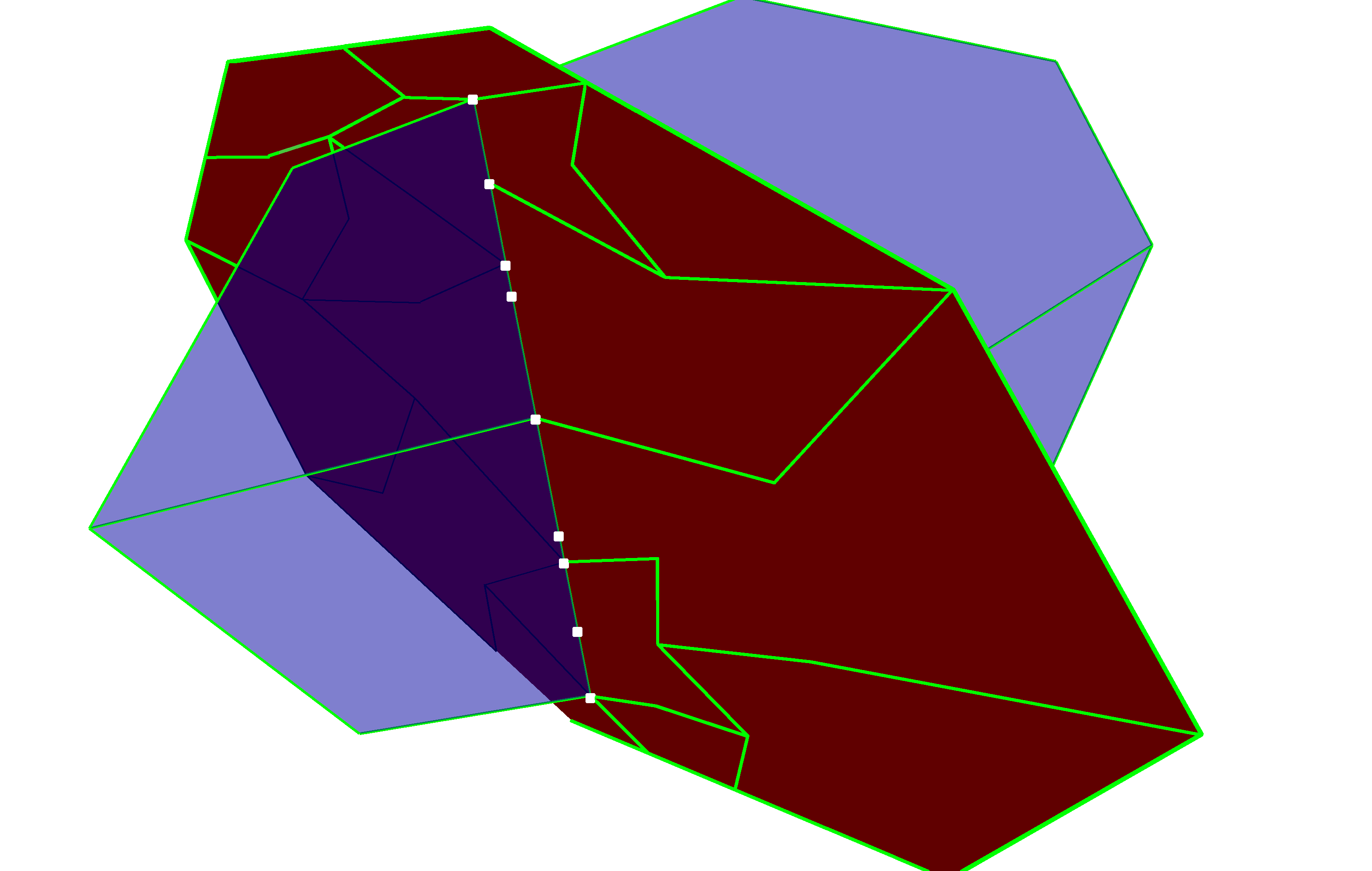}%
    \caption{The meshes of the two fractures linked together before (left) and
    after (right) coarsening. The points at the
    intersection are highlighted in white and preserved in the coarsening.
    The original grid has 93 elements and 153 edges, while the coarse grid has
    23 elements and 82 edges.
    The elements at the
    intersection are non-matching but conforming.}%
    \label{fig:mesh_two_fract_mesh}
\end{figure}
An example of the grid overlapped to the pressure field is represented in \cref{fig:real}
in the introduction.

%
%


\section{Examples} \label{sec:examples}


In this section we validate the models presented in the previous sections
through several tests and examples. In particular in \cref{subsec:coarsening} we
highlight the potentiality of the coarsening algorithm presented in
\cref{sec:coarsening} applied in our context. \cref{subsec:convergence}
contains several tests with a single as well as multiple fractures to put in
evidence the error decay and others properties of the numerical solution.
\FIX{}{In \cref{subsubsect:isect} we present a numerical example showing the importance
to use model presented in \cref{pb:dc}.}
Finally in \cref{subsubsec:realistic} we present the solution on a realistic
geometry.
Our implementation is carried out within the framework of the Matlab Reservoir Simulation Toolbox (MRST)
\cite{Lie2012}.

We make use of the following
\begin{gather*}
    \text{sparsity} = \dfrac{\text{nnz}(A)}{\text{size}(A)^2},
\end{gather*}
with $\text{nnz}(A)$ the number of non-zero values of the matrix $A$ and $\text{size}(A)$
the number of rows (or columns) of $A$, as a measure for the sparsity of a matrix.


\subsection{Coarsening} \label{subsec:coarsening}


In this subsection we investigate the ability of the coarsening algorithm,
introduced in \cref{sec:coarsening} to generate coarse meshes where the direction of the cells
reflects the underlying  physic. To that end, we
consider two problems with different effective permeability: isotropic and
anisotropic, the latter in a heterogeneous setting. In both cases we start from
a single fracture discretized by a triangular grid, then using the algorithm we
coarsen the mesh with different level of refinement: $c_{depth} = 1$, $c_{depth}
= 3$, and $c_{depth} = 5$. The fracture is the unit square rotate by $\pi/4$
along the $z$ axis.

In the first case the effective permeability is the identity tensor.
\cref{fig:iso_coarse_mesh} shows the meshes obtained with different level of
coarsening.
\begin{figure}[tbp]
    \centering
    \includegraphics[width=0.25\textwidth]{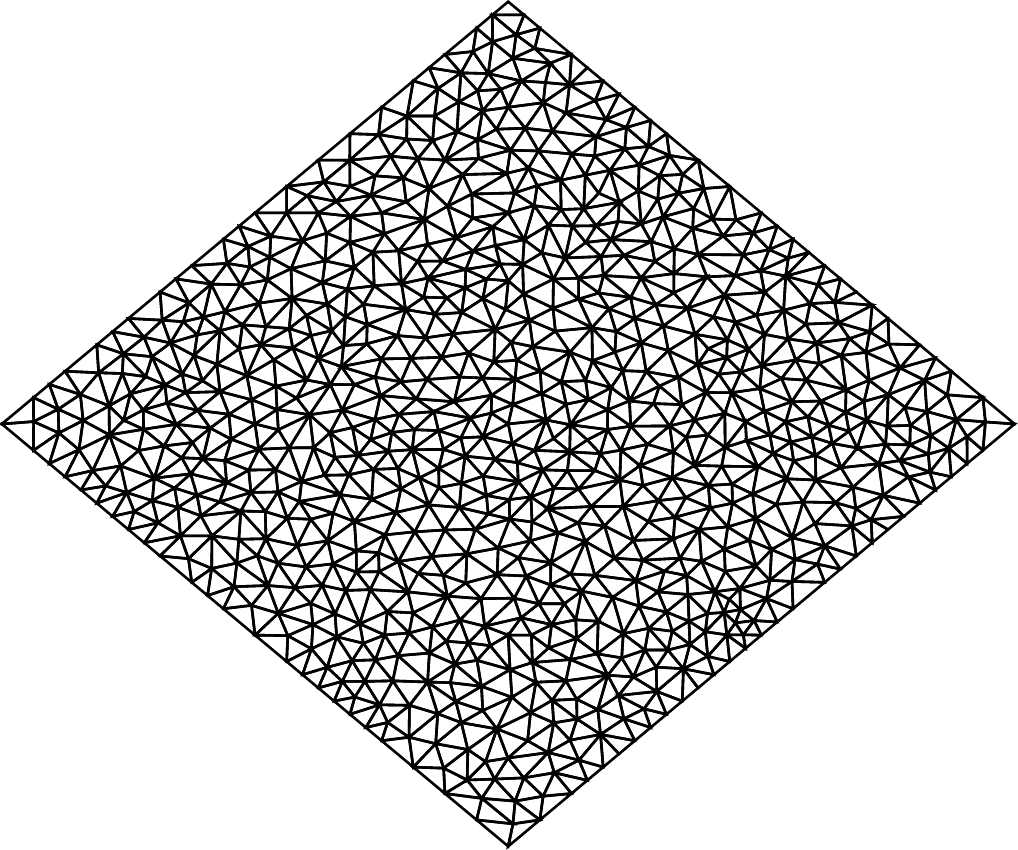}%
    \hfill%
    \includegraphics[width=0.25\textwidth]{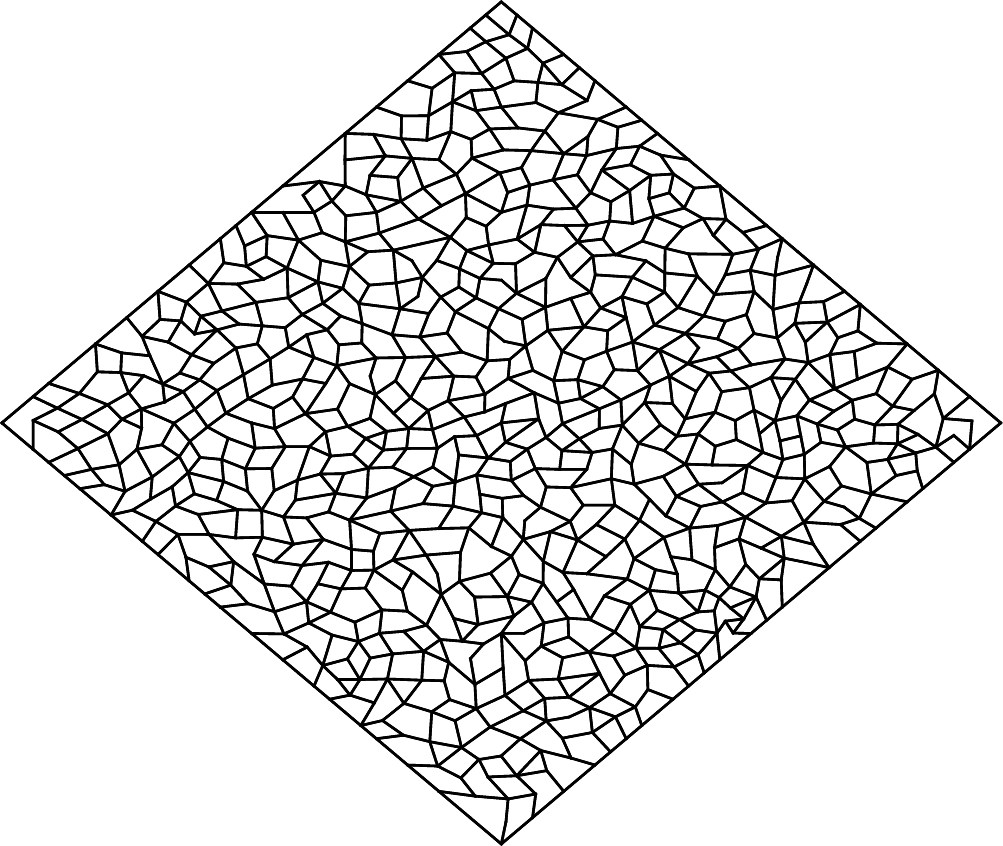}%
    \hfill%
    \includegraphics[width=0.25\textwidth]{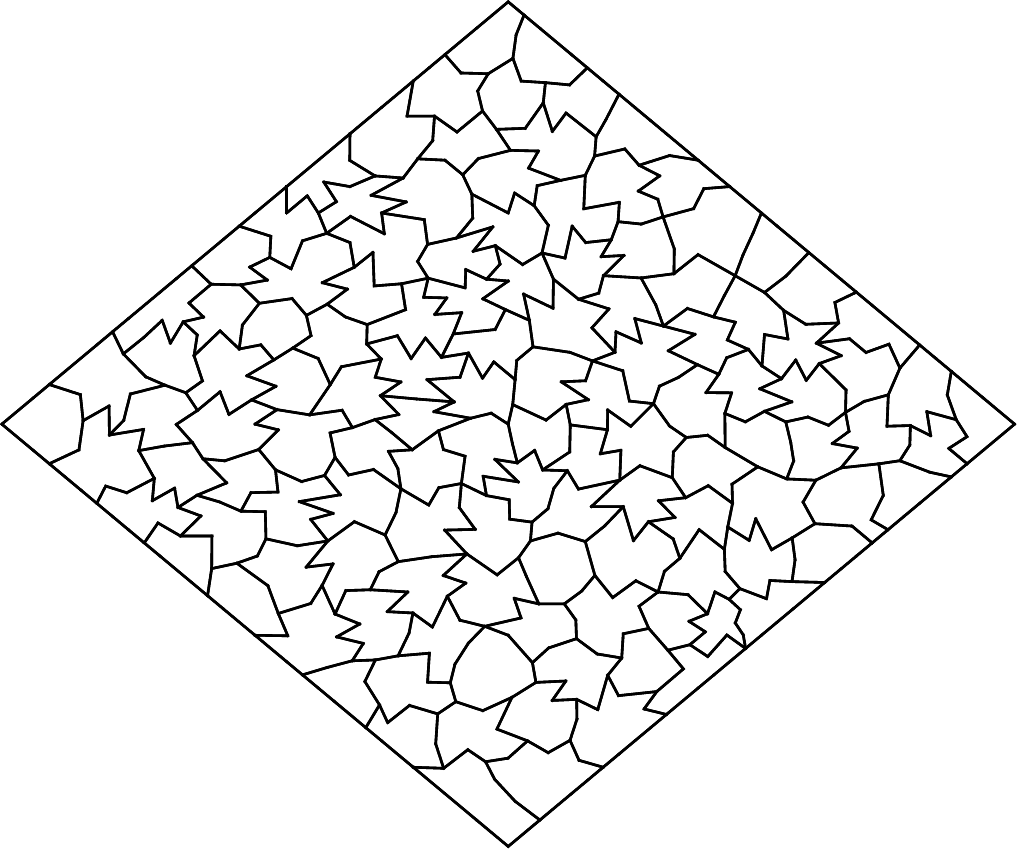}%
    \hfill%
    \includegraphics[width=0.25\textwidth]{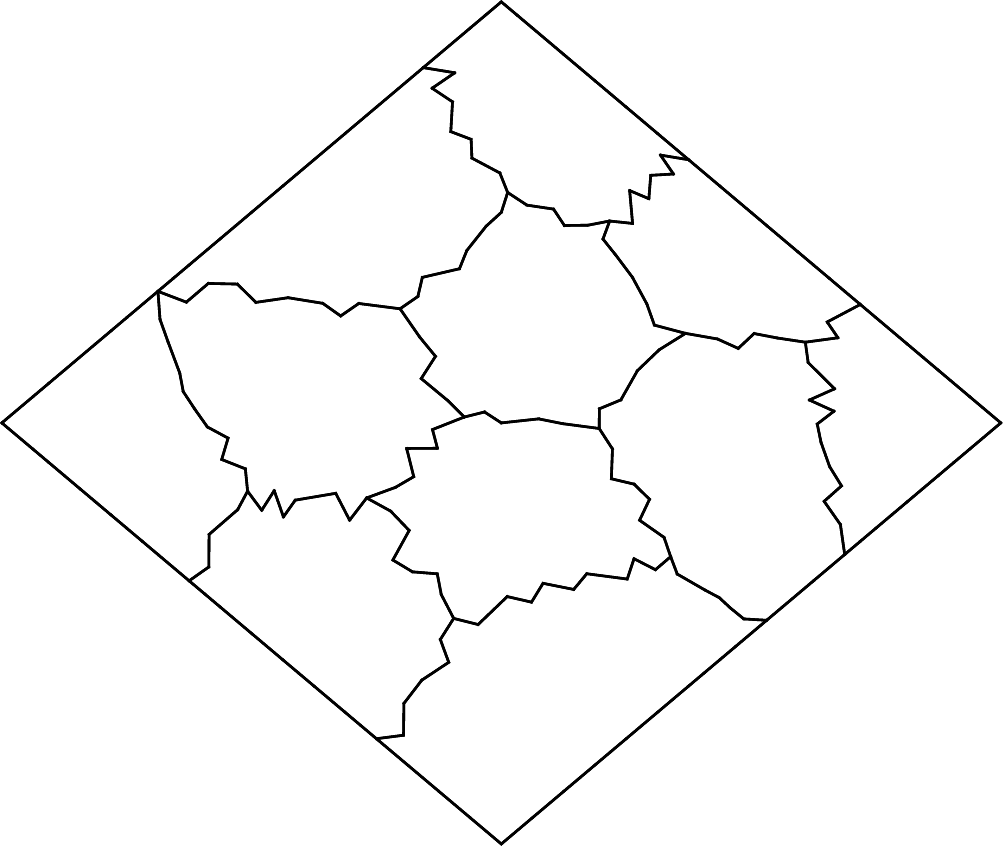}%
    \caption{Example of meshes used for the example in
    \cref{subsec:coarsening} with isotropic effective permeability tensor.
    From the left: triangular mesh, the coarse meshes with $c_{depth}=1$,
    $c_{depth}=3$, and $c_{depth}=5$, respectively.}%
    \label{fig:iso_coarse_mesh}
\end{figure}
The resulting cells of the meshes do not have a preferred alignment.  The relative
sparsity of the VEM matrix for each coarse level is: $1.4\cdot 10^{-3}$ for the
triangular grid, $3.9\cdot 10^{-3}$ for $c_{depth}=1$, $2.3\cdot 10^{-2}$ for
$c_{depth}=3$, and $2\cdot 10^{-1}$ for $c_{depth}=5$, which is a natural effect
of this coarsening algorithm.  Finally the number of edges (minimum, average,
and maximum) and cells for each level is: $(3,5,6)$ and 628 for $c_{depth}=1$,
$(8,12,15)$ and 122 for $c_{depth}=3$, and $(29,35,41)$ and 11 for
$c_{depth}=5$. The resulting meshes respect the theoretical requests for the
VEM.

The second case deals with an anisotropic and heterogeneous effective
permeability tensor.  We divide the fracture into four sectors: top-left part
($x \leq 0$ and $y > 0$), top-right part ($x > 0$ and $y > 0$), bottom-left part
($x \leq 0$ and $y \leq 0$), and bottom-right part ($x > 0$ and $y \leq 0$). In
the first and last sector we set $\FIX{\kappa}{\lambda}={\rm diag}[1, 100]$, while in the
others we have $\FIX{\kappa}{\lambda}={\rm diag}[100,1]$.
\cref{fig:ani_coarse_mesh} shows the
meshes obtained with different level of coarsening.
\begin{figure}[tbp]
    \centering
    \includegraphics[width=0.25\textwidth]{iso_tri}%
    \hfill%
    \includegraphics[width=0.25\textwidth]{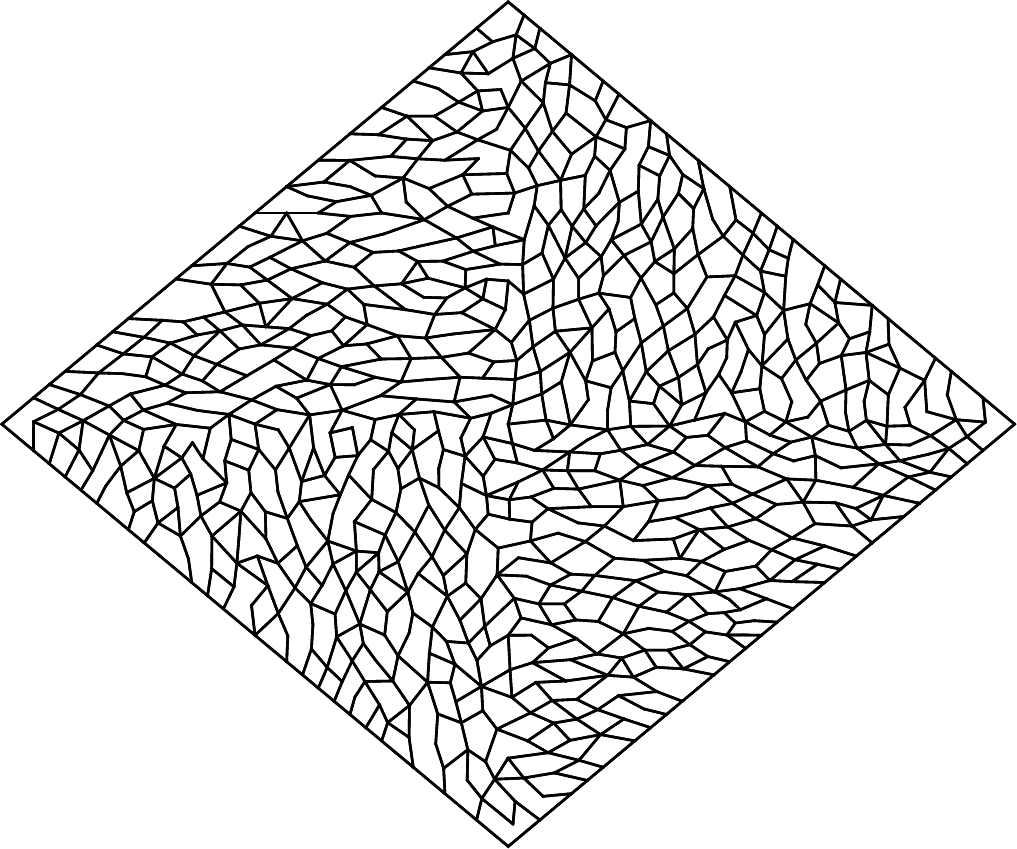}%
    \hfill%
    \includegraphics[width=0.25\textwidth]{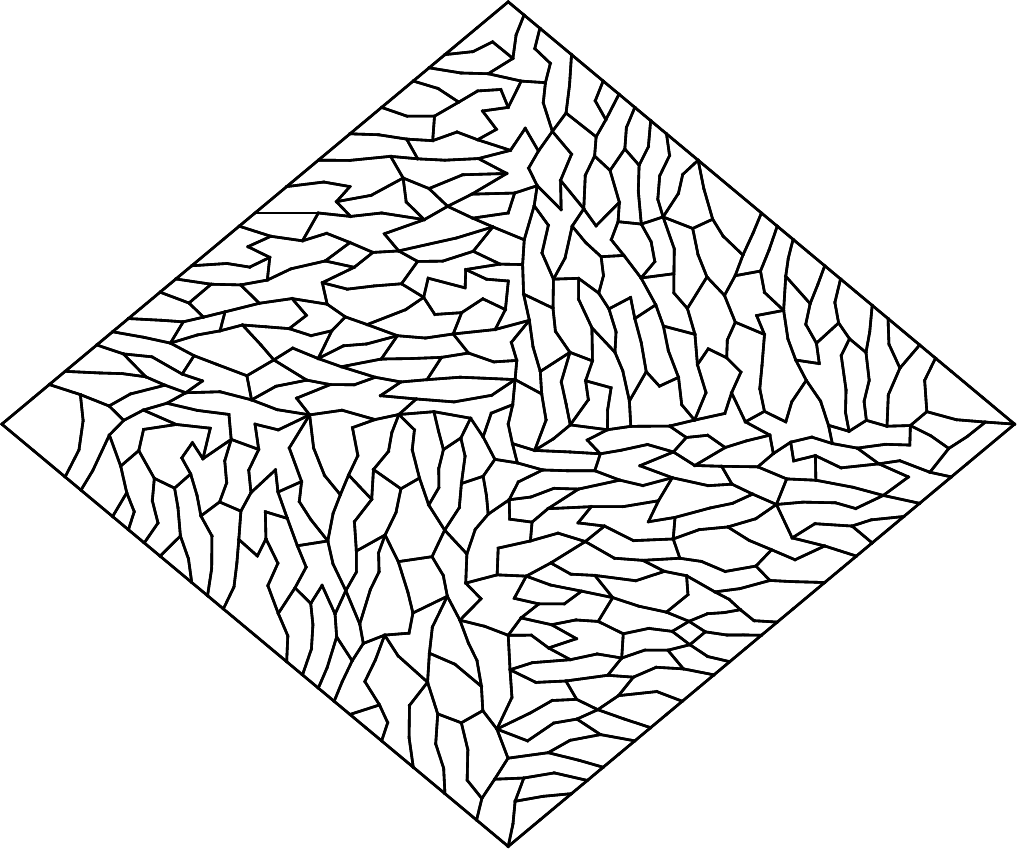}%
    \hfill%
    \includegraphics[width=0.25\textwidth]{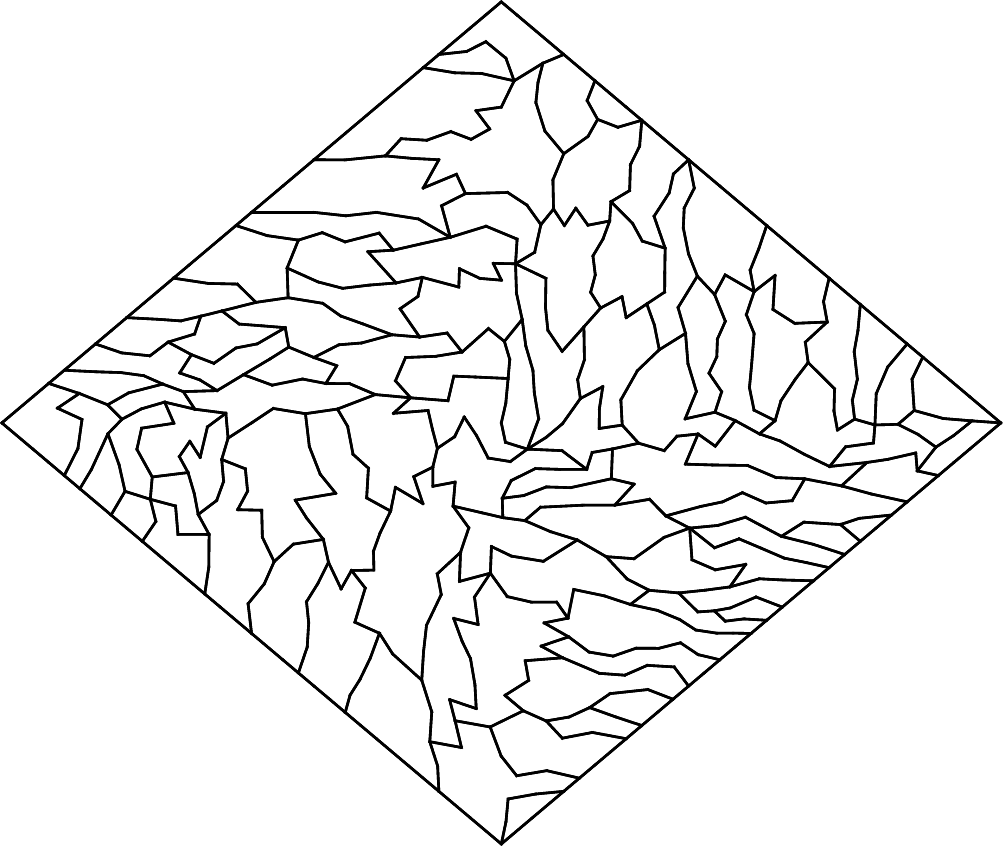}%
    \caption{Example of meshes used for the example in
    \cref{subsec:coarsening} for the anisotropic permeability tensor.
    From the left: triangular mesh, the coarse meshes with $c_{depth}=1$,
    $c_{depth}=3$, and $c_{depth}=5$, respectively.}%
    \label{fig:ani_coarse_mesh}
\end{figure}
The resulting cells of the meshes have a preferred alignment related to the
underlying permeability which respect both the anisotropy and the heterogeneity.
The sparsity of the VEM matrix for each coarse level is: $1.4\cdot 10^{-3}$ for
the triangular grid, $4.5\cdot 10^{-3}$ for $c_{depth}=1$, $1.2\cdot 10^{-2}$
for $c_{depth}=3$, and $3.1\cdot 10^{-2}$ for $c_{depth}=5$, which is a natural
effect of this coarsening algorithm.  Finally the number of edges (minimum,
average, and maximum) and cells for each level is: $(3,5,8)$ and 557 for
$c_{depth}=1$, $(4,8,12)$ and 236 for $c_{depth}=3$, and $(5,14,24)$ and 95 for
$c_{depth}=5$. Compared with the previous case we notice that the number of
cells and the variability in the number of edges is greater, which is needed to
preserve the anisotropy of the final mesh. The smaller sparsity of the matrices
is a direct consequence of this fact. Also in this case the resulting meshes
respect the theoretical requests for the VEM.

We conclude that the coarsening presented in \cref{sec:coarsening} is a valid
tool to create a coarse mesh which is adapted with the underlying physic. In the
next examples we consider this coarsening strategy in the computation of
error decay.


\subsection{Convergence \FIX{E}{e}vidence} \label{subsec:convergence}


In this part we present tests that show  numerical evidence of
convergence of the approximate to the exact solution as well as the order of
convergence. In the first part, \cref{subsubsec:single}, a single fracture is
considered with different types of meshes. In the second example two
intersecting fractures, with coupling conditions described in \cref{pb:cc} are
applied. Finally two fractures are considered where the flow is described also in their
intersection. In the following experiments we make use of the relative errors
for both the pressure and the projected velocity, we use the following expressions:
\begin{gather*}
    err(p) = \dfrac{\norm{ p - p_{ex}}_{\Omega} }{ \norm{ p_{ex} }_\Omega }
    \quad \text{and} \quad
    err( \Pi_0 \bm{u} ) = \dfrac{\norm{ \Pi_0 \bm{u} - \bm{u}_{ex}}_{\Omega} }{ \norm{
    \bm{u}_{ex} }_\Omega }
\end{gather*}
where $p_{ex}$ and $\bm{u}_{ex}$ are the exact solutions computed in the centre
of each cell. The same type of errors are computed also for the intersection when
the model described in \cref{pb:dc} is considered.
In addition, furthers details are included in \cref{sec:appendix}.


\subsubsection{Single \FIX{F}{f}racture} \label{subsubsec:single}


We consider a single fracture $\Omega$ constructed applying a rotation matrix to
the points of the unit square $[0,1]^2$. The rotation is $\pi/4$ along the $x$
axis. We assume unit effective permeability $\FIX{\kappa}{\lambda}$ and
source term equal to
\begin{gather*}
    f(x,y,z) = 7\, z - 4\, \sin\!\left(\pi\, y\right) + 2\, {\pi}^2\, y^2\,
    \sin\!\left(\pi\, y\right) - 8\, \pi\, y\, \cos\!\left(\pi\, y\right).
\end{gather*}
We set pressure boundary conditions such that the exact solution of the
problem is the following
\begin{gather*}
    p_{ex}(x,y,z) = x^2\, z + 4\, y^2\, \sin\!\left(\pi\, y\right) - 3\, z^3.
\end{gather*}
The exact Darcy velocity, in Cartesian coordinates, is
\begin{gather*}
    \bm{u}_{ex}(x,y,z) = \left[ \begin{array}{c} - 2\, x\, z\\ 0.5 ( 9\, z^2 -
    x^2 ) - 4\, y\,
    \sin\!\left(\pi\, y\right) - 2\, \pi\, y^2\, \cos\!\left(\pi\, y\right)\\
    0.5(9\, z^2 - x^2) - 4\, y\, \sin\!\left(\pi\, y\right) - 2\,
    \pi\, y^2\, \cos\!\left(\pi\, y\right) \end{array}\right].
\end{gather*}
It is possible to verify that $0 \leq p_{ex} \leq 1.337$ in $\Omega$.
We consider four different families of grids to analyse the order of
convergence for both $p$ and $\Pi_0 \bm{u}$, first build in the unit square and
then mapped in $\Omega$. We have: Cartesian grids, ``coarse grids'' where the
algorithm from \cref{sec:coarsening} is applied with $c_{depth} = 2$
on Cartesian grids, triangular grids, and ``random grids''
where, starting from Cartesian grids, the internal nodes are randomly moved. In
the latter a sanity check of the mesh is performed to avoid degenerate cases.
See \cref{fig:mesh} as an example. It is worth to notice that in the case of
``coarse grids'' and ``random grids'' the cells may not be convex.
\begin{figure}[tbp]
    \centering
    \includegraphics[width=0.33\textwidth]{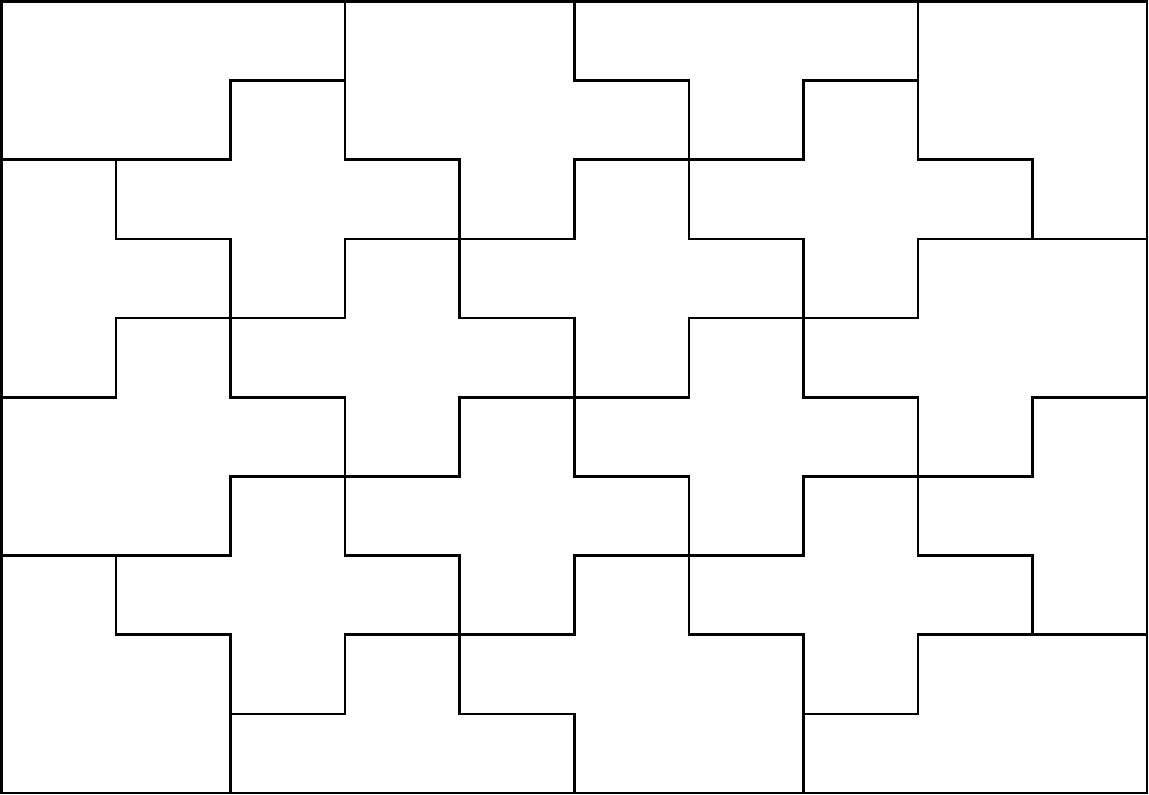}%
    \hfill%
    \includegraphics[width=0.33\textwidth]{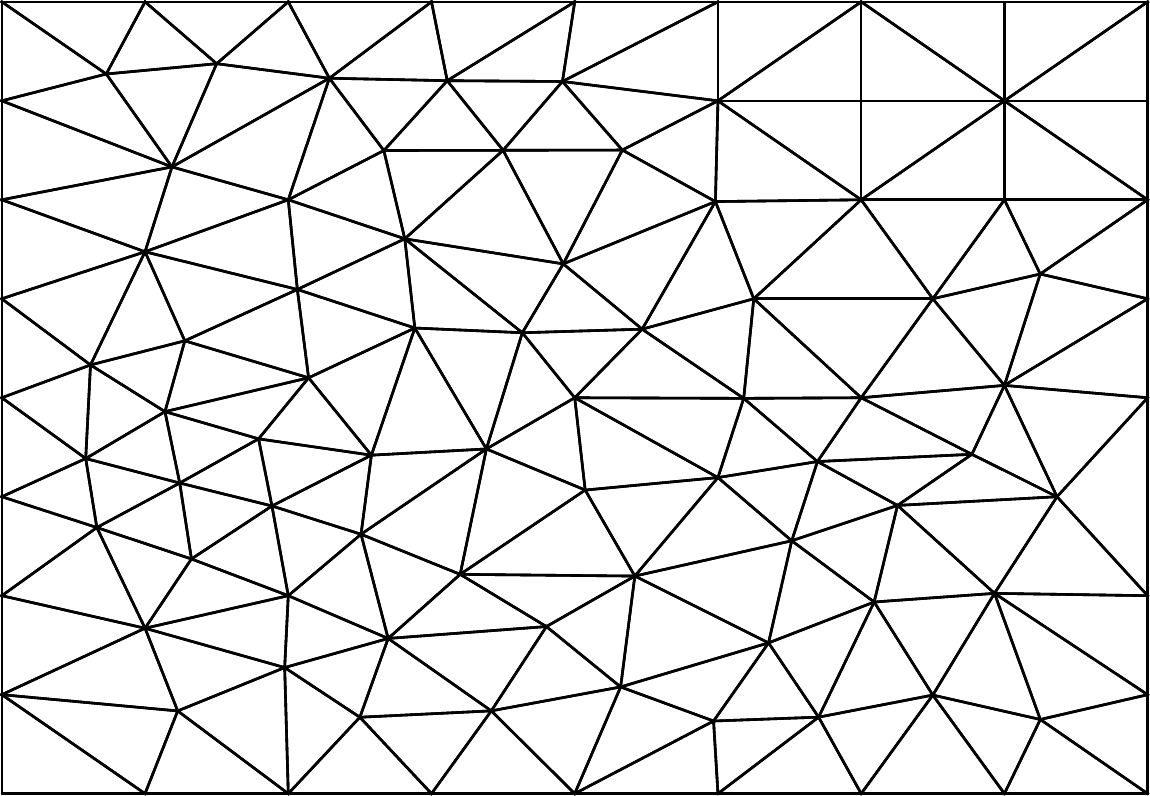}%
    \hfill%
    \includegraphics[width=0.33\textwidth]{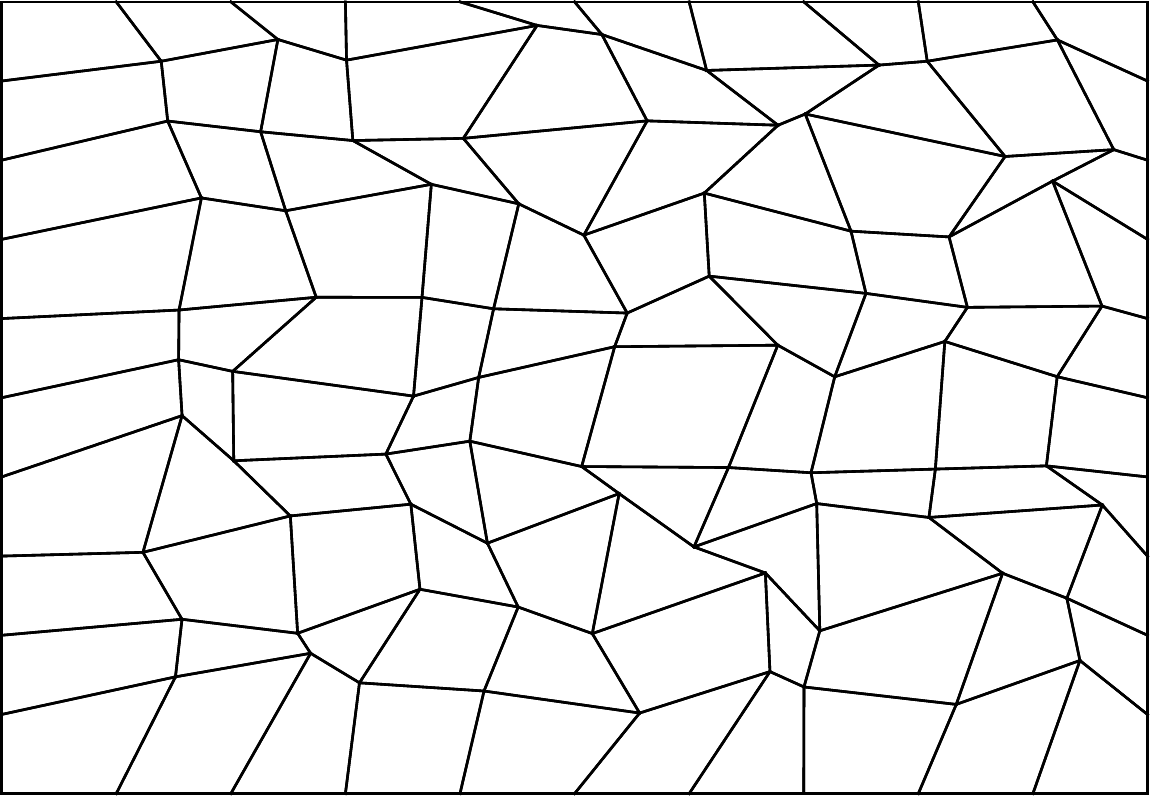}%
    \caption{Example of meshes used for the example in
    \cref{subsubsec:single}. From the left: coarse mesh, triangular mesh,
    and random mesh. The pattern of the coarse mesh remains similar for
    subsequent refinements. The meshes are presented in the $(x,y)$ plane. The
    Cartesian mesh is omitted.}%
    \label{fig:mesh}
\end{figure}
In \cref{fig:error1} we report the order of convergence for both $p$ and $\Pi_0
\bm{u}$ for the four mesh families and we can observe that, in all the cases,
the order of convergence respect the decay derived from the theory in
\cite{BeiraoVeiga2016}.
\cref{tag:errors_first} in \cref{sec:appendix} reports the detailed values for
this example, as well as the sparsity of the matrix, and the minimum and maximum
values of the solution. It is interesting to notice that in some cases the
discrete maximum or minimum principle is violated.
\begin{figure}[tbp]
    \centering
    \includegraphics{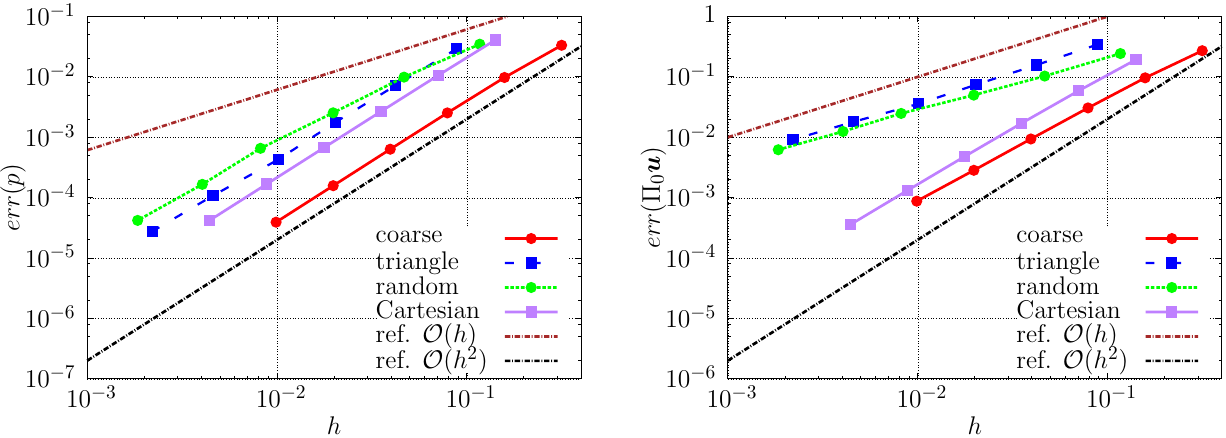}
    \caption{In the figure the error decay for both $p$, in the left plot, and
        $\Pi_0 \bm{u}$, in the right plot, for the example in
        \cref{subsubsec:single}. We add the references $\mathcal{O}(h)$ and
        $\mathcal{O}(h^2)$ to aid the comparison.}%
    \label{fig:error1}
\end{figure}


\subsubsection{Two \FIX{F}{f}ractures with \FIX{I}{i}ntersection} \label{subsubsec:two}


In the second example we analyse the error decay for two intersecting
fractures, where \cref{pb:cc} is considered. For this test we adapt the problem
in Subsubsection 5.3.1 of \cite{Benedetto2014} to our setting. Each fracture is
build from an ellipses discretized by 8 segments. We consider two
orthogonal fractures, depicted in \cref{fig:sol2}, defined as
\begin{gather*}
    \Omega_1 = \left\{ (x,y,z) \in \mathbb{R}^3: z^2 + 4y^2 \leq 1, x = 0
    \right\}\\
    \Omega_2 = \left\{ (x,y,z) \in \mathbb{R}^3: x^2 + 4y^2 \leq 1, z = 0
    \right\}.
\end{gather*}
\begin{figure}[tbp]
    \centering
    \includegraphics[width=0.25\textwidth]{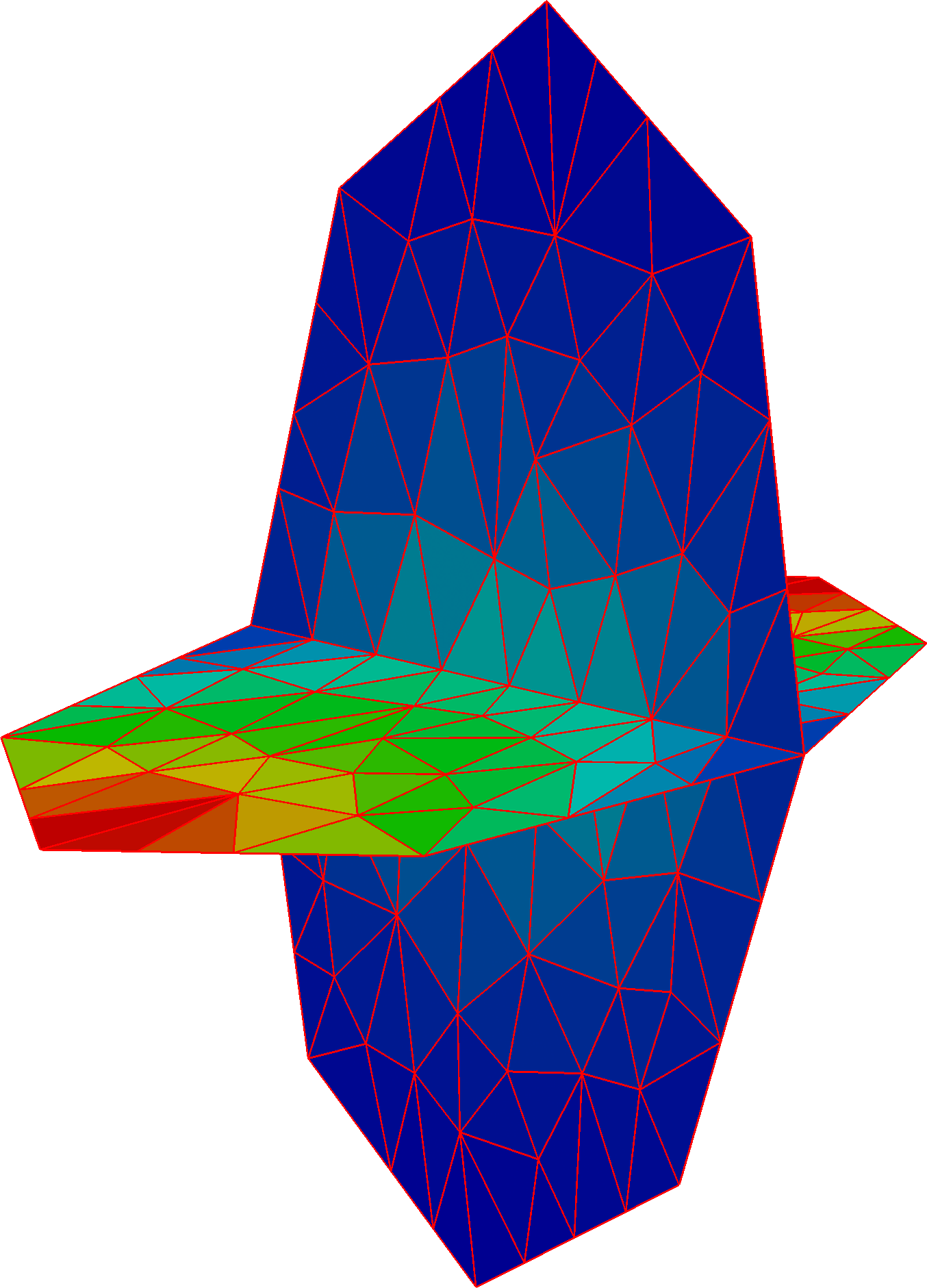}%
    \hfill%
    \includegraphics[width=0.25\textwidth]{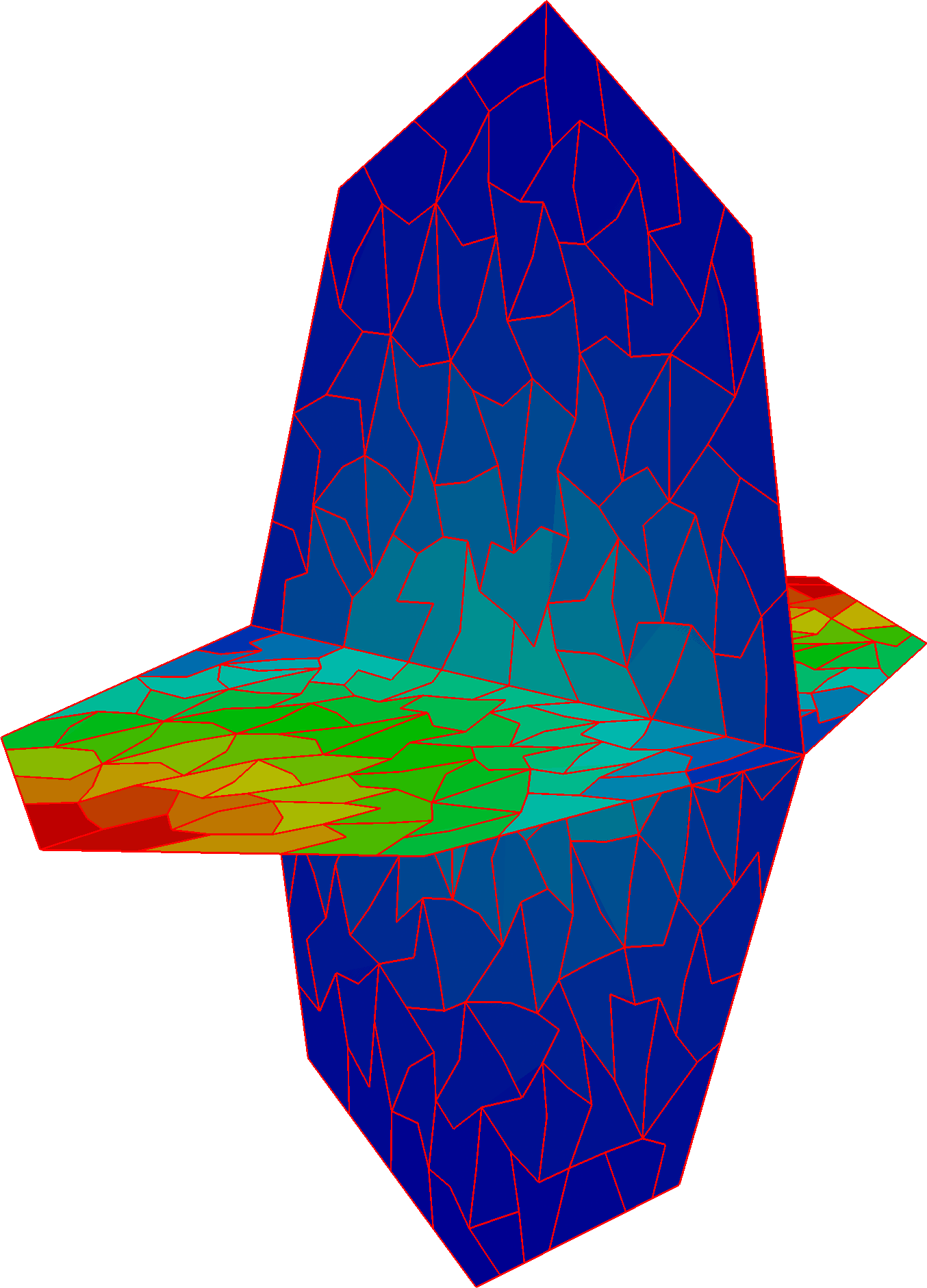}%
    \hfill%
    \includegraphics[width=0.25\textwidth]{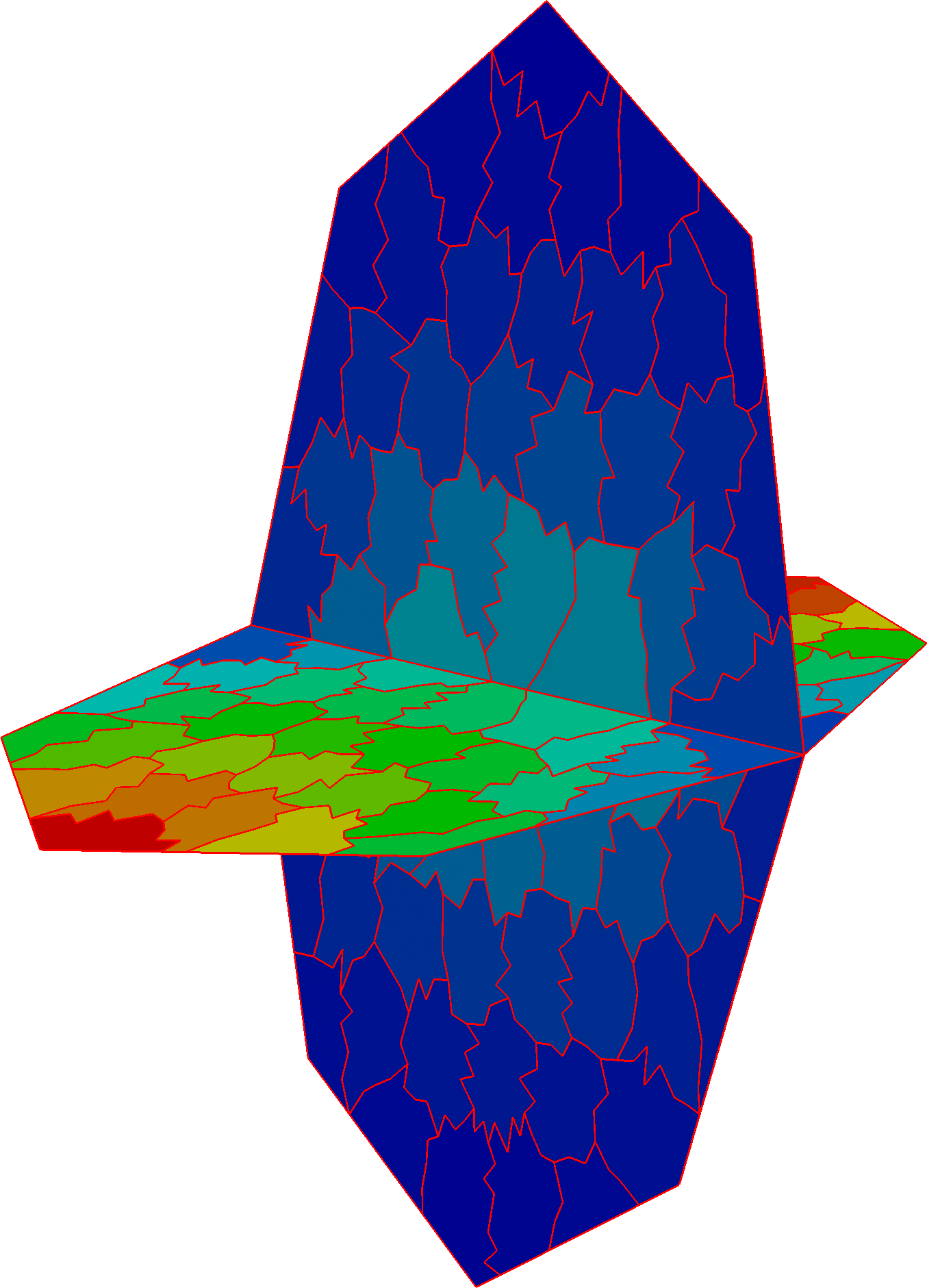}%
    \hfill%
    \includegraphics[width=0.25\textwidth]{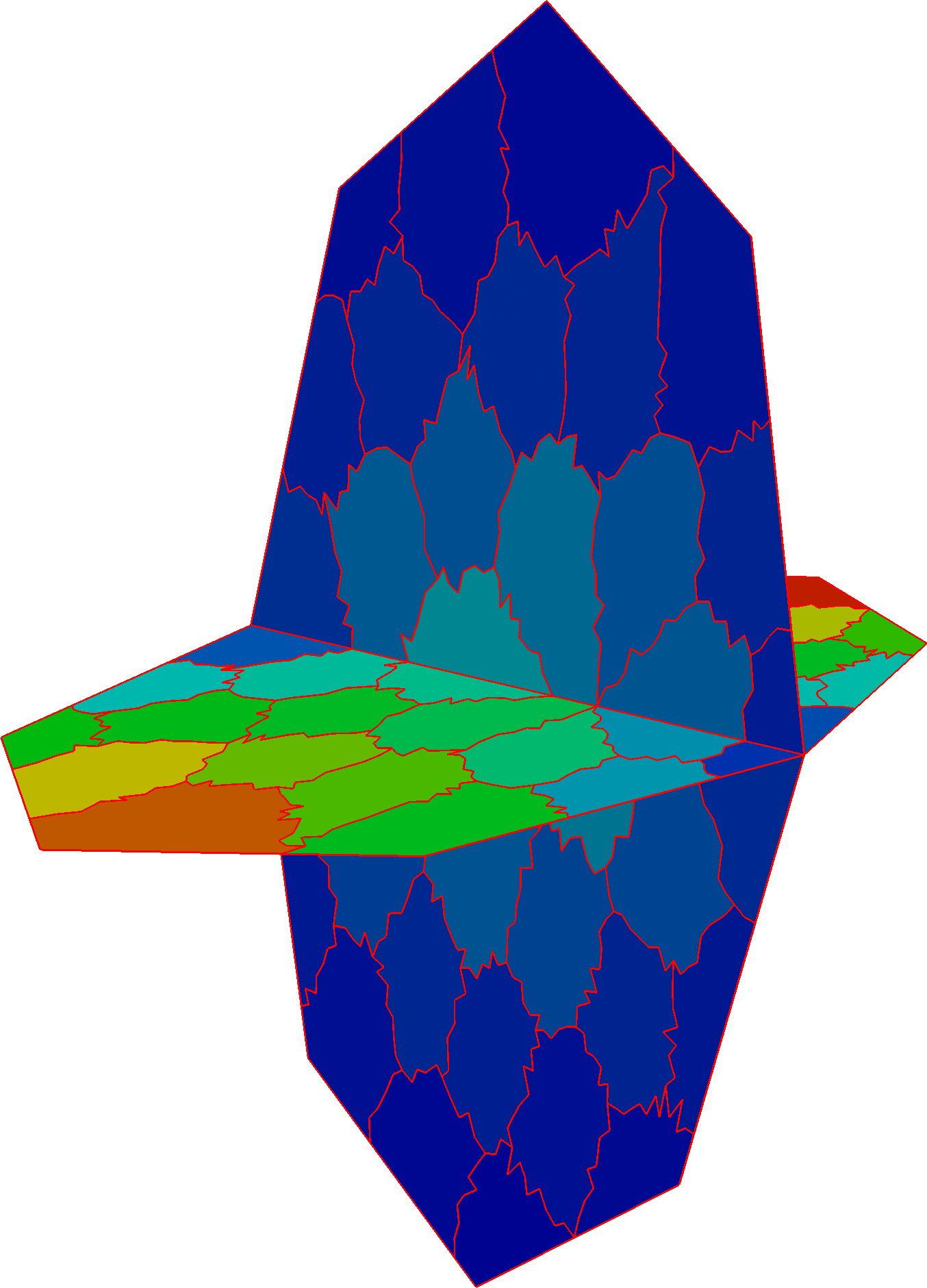}%
    \caption{Example of meshes and numerical solutions for the example in
    \cref{subsubsec:two}. From the left: triangular mesh, coarse mesh with
    $c_{depth}=2$, coarse mesh with $c_{depth}=4$, and coarse mesh with
    $c_{depth}=5$.
    The solution has range in $[0,3.6]$ with a ``Blue to Red Rainbow'' colour
    map.}%
    \label{fig:sol2}
\end{figure}
We assume pressure boundary condition on both $\partial \Omega_1$ and $\partial
\Omega_2$, unit effective permeability $\FIX{\kappa}{\lambda}$ and source as
\begin{gather} \label{eq:exact_sol_source}
    \begin{gathered}
        \FIX{q}{f}(x,y,z) =
        \begin{dcases*}
            8y( 1 - y ) - 8( z - 1 )^2 & for $z\geq 0$ \\
            8y( 1 - y ) - 8( z + 1 )^2 & for $z< 0$
        \end{dcases*} \text{in } \Omega_1\\
        \FIX{q}{f}(x,y,z) =
        \begin{dcases*}
            8y( 1 - y ) - 8( x + \zeta )^2 & for $x\geq 0$ \\
            8y( 1 - y ) - 8( x - \zeta )^2 & for $x< 0$
        \end{dcases*} \text{in } \Omega_2
    \end{gathered}
\end{gather}
With this data the exact solutions $p_{ex}$ and $\bm{u}_{ex}$ are
\begin{subequations} \label{eq:exact_sol}
\begin{gather}
    \begin{gathered}
        p_{ex}(x,y,z) =
        \begin{dcases*}
            4y(1-y)(z-1)^2 & for $z \geq 0$\\
            4y(1-y)(z+1)^2 & for $z < 0$
        \end{dcases*} \text{in } \Omega_1 \\
        p_{ex}(x,y,z) =
        \begin{dcases*}
            4y(1-y)(x+\zeta)^2 & for $x \geq 0$\\
            4y(1-y)(x-\zeta)^2 & for $x < 0$
        \end{dcases*} \text{in } \Omega_2
    \end{gathered}
\end{gather}
\begin{gather}
    \begin{gathered}
        \bm{u}_{ex} (x,y,z) =
        \begin{dcases*}
            \left[ 0; \,\,\, 8y(z-1)^2; \,\,\, -8y(1-y)(z-1) \right]^\top & for $z \geq 0$\\
            \left[ 0; \,\,\, 8y(z+1)^2; \,\,\, -8y(1-y)(z+1) \right]^\top & for $z < 0$
        \end{dcases*} \text{in } \Omega_1\\
        \bm{u}_{ex} (x,y,z) =
        \begin{dcases*}
            \left[ -8y(1-y)(x+\zeta); \,\,\,
            8y(x+\zeta)^2; \,\,\, 0 \right]^\top & for $x \geq 0$\\
            \left[ -8y(1-y)(x-\zeta); \,\,\, 8y(x-\zeta)^2; \,\,\, 0 \right]^\top & for $x < 0$
        \end{dcases*} \text{in } \Omega_2,
    \end{gathered}
\end{gather}
\end{subequations}
we assume $\zeta = 1$.
We consider four different families of discretization to analyse the order of
convergence for both $p$ and $\Pi_0 \bm{u}$ in both fractures.  We have:
triangular grids, ``coarse grids'' with $c_{depth} =2$,
$c_{depth}=4$, and $c_{depth} = 5$. In all the coarse cases we apply the
algorithm from \cref{sec:coarsening} on triangular grids.
See \cref{fig:sol2} as an example.  It is worth to notice that in the case of
``coarse grids'' the cells may not be convex and even they may be not
star-shaped.
\begin{figure}[tbp]
    \centering
    \includegraphics{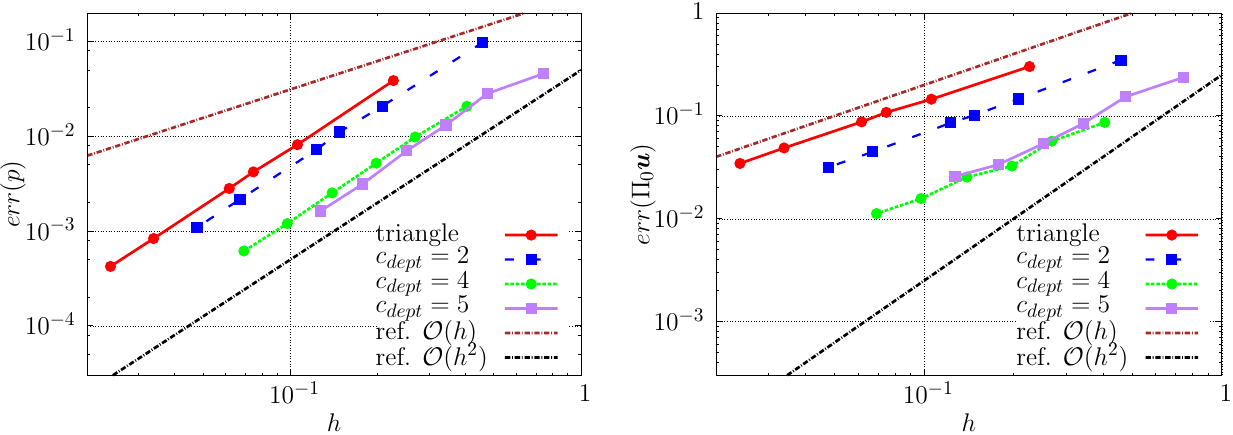}
    \caption{In the figure the error decay for both $p$, in the left plot, and
        $\Pi_0 \bm{u}$ for the first example in \cref{subsec:convergence}. We add
        the references $\mathcal{O}(h)$ and
        $\mathcal{O}(h^2)$ to aid the comparison.}%
    \label{fig:error2}
\end{figure}
In \cref{fig:error2} we report the $L^2$-relative errors for both the projected
velocity $\Pi_0 \bm{u}$ and the pressure $p$. We observe that the error decay
for the latter is quadratic and the former is linear with respect to the mesh
size. Moreover, for the same $h$, the error is higher for the triangular grid
and smaller when $c_{depth}$ increases. One possible explanation is that in the
first case each pressure \textit{d.o.f.} is linked with 3 velocity \dofs, while
for $c_{depth}=2$ the average number is 6, for $c_{depth}=4$ we have 20 edges,
and for $c_{depth}=5$ the average number of edges is 37. This fact can help the
accuracy for the meshes with more velocity \dofs. \FIX{}{Note that the grid size
$h$ reported is the average mesh size, which in some cases exhibits an erratic
reduction compared to the steady decrease of the grid size in the triangular
grid.} \cref{tag:errors_second} in \cref{sec:appendix} reports the detailed
values for this example, as well as the sparsity of the matrix, the minimum and
maximum values of the solution, number of edges for each cell.

\begin{remark}
    In this test the meshes at the one co-dimensional intersection are matching
    just because the procedure to construct the two meshes is the same, except
    the mapping from the reference domain.
\end{remark}


\subsubsection{Two \FIX{F}{f}ractures with \FIX{I}{i}ntersection \FIX{F}{f}low} \label{subsubsec:two_inters}


In this example we analyse the error decay for the coupled system \cref{pb:dc},
where two fractures intersect. The fractures $\Omega_1$ and $\Omega_2$ are the
same as the test shown in \ref{subsubsec:two}, while the intersection $\gamma$
is defined as $\gamma = \left\{ (x,y,z) \in \mathbb{R}^3: x = 0,\, z = 0, \, y
\in [0,1] \right\} $. The analytical solution of the problem
$p_{ex}$ and $\bm{u}_{ex}$ is
\cref{eq:exact_sol} with $\zeta = -1$ for the fractures and
$\hat{p}_{ex}$ and $\hat{\bm{u}}_{ex}$ for the intersection
\begin{gather*}
    \hat{p}_{ex}(x,y,z) = 5y(1-y) \quad \text{in } \gamma
    \quad \text{and} \quad
    \hat{\bm{u}}_{ex}(x,y,z) = \left[ 0; \,\,\, 5+10y; \,\,\,0 \right]^\top
    \quad \text{in } \gamma.
\end{gather*}
We consider pressure boundary conditions on both fractures as well as on the
intersections. The effective permeability for the fractures and for the
intersection is unitary, while the normal effective permeability for the
intersection is equal to $\FIX{\widetilde{\kappa}}{\widetilde{\lambda}} = 8$.
The source terms are chosen
as \cref{eq:exact_sol_source} with $\zeta = -1$ for the fractures and
$\FIX{\hat{q}}{\hat{q}}$
for the intersection
\begin{gather*}
    \FIX{\hat{q}}{\hat{q}}(x,y,z) = 10 + 32y(1-y) \quad \text{in } \gamma.
\end{gather*}
We consider a family of triangular meshes and two of coarse meshes. The latter
are constructed from a triangular mesh with different level of coarsening:
$c_{depth} = 2$ and $c_{depth} = 4$.
\cref{fig:sol3} shows a graphical representation of the solution obtained, where also
the pressure in the intersection is reported.
\begin{figure}[tbp]
    \centering
    \includegraphics[width=0.25\textwidth]{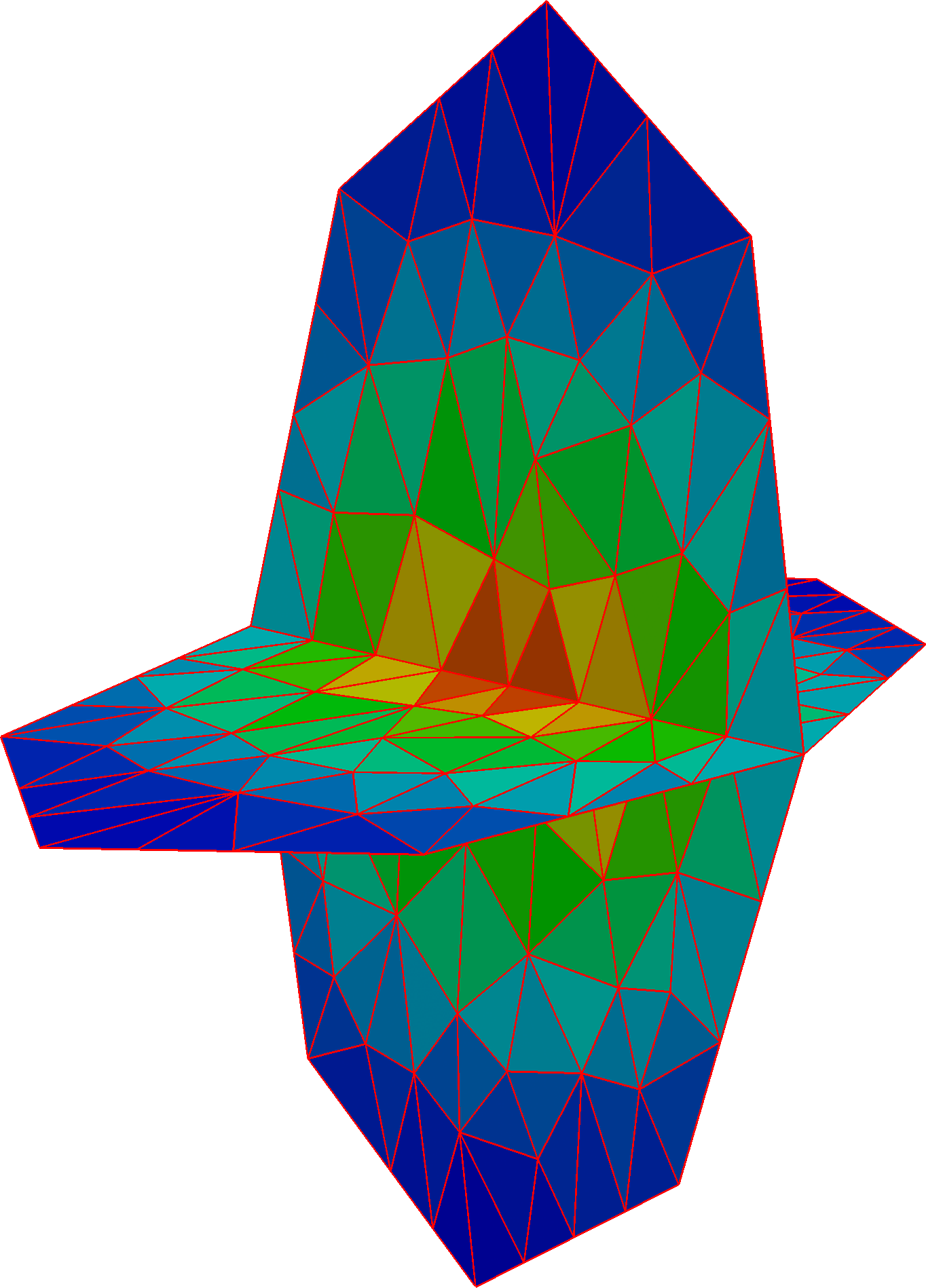}%
    \hfill%
    \includegraphics[width=0.25\textwidth]{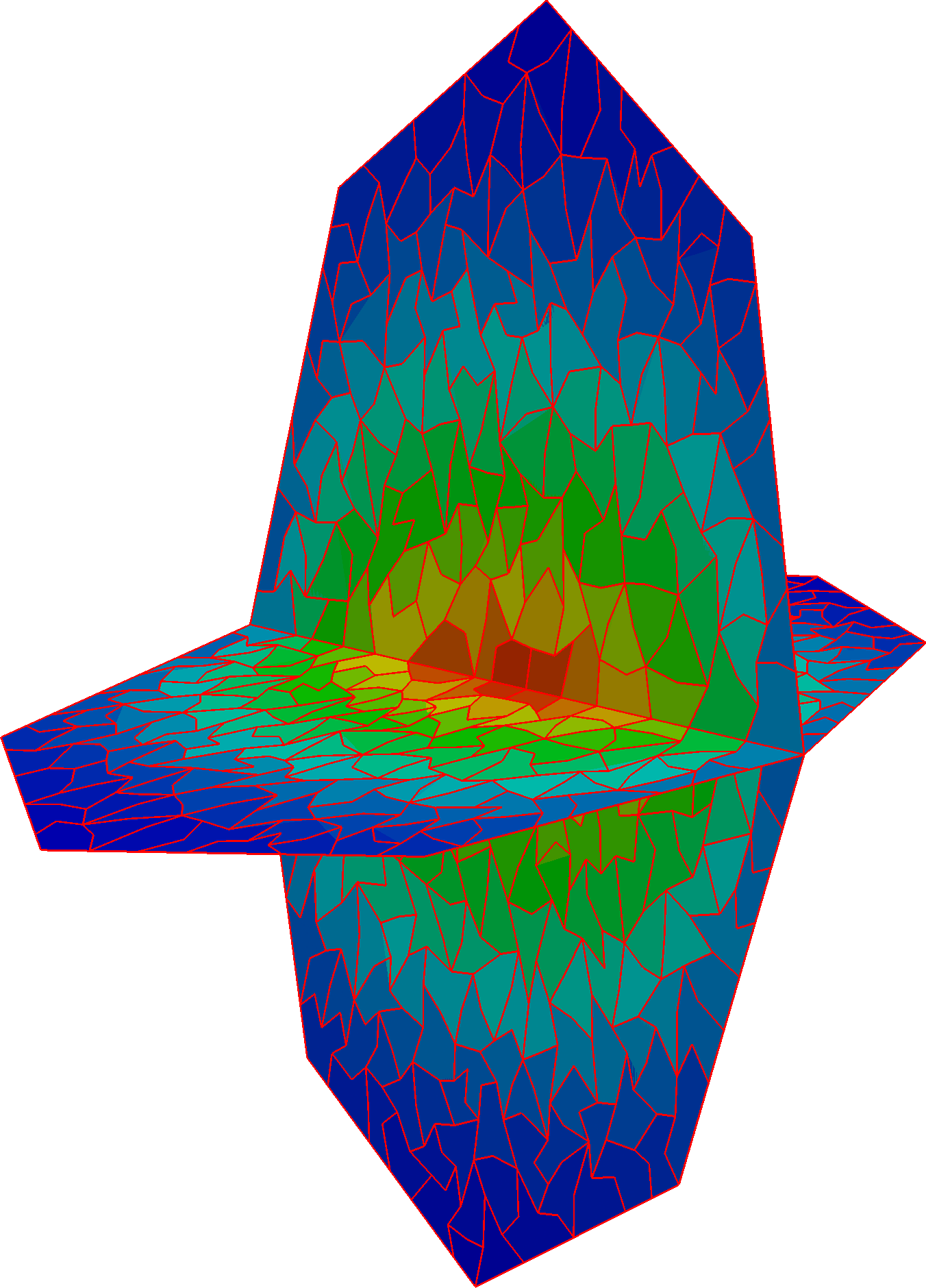}%
    \hfill%
    \includegraphics[width=0.25\textwidth]{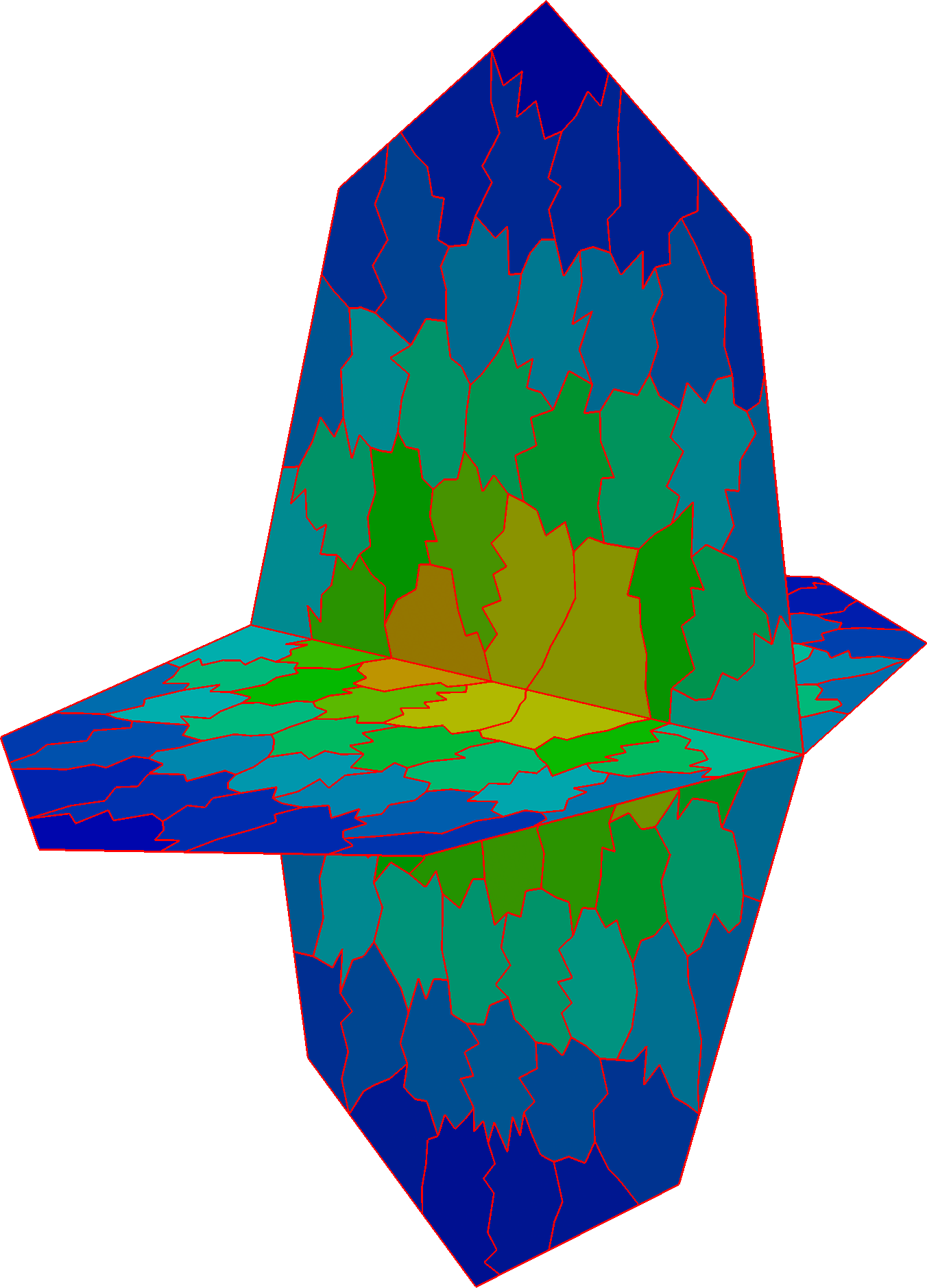}%
    \hfill%
    \includegraphics[width=0.015\textwidth]{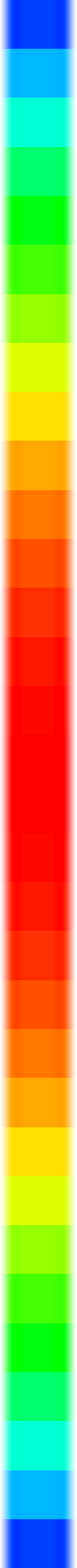}%
    \hfill%
    \caption{Example of meshes and numerical solutions for the example in
    \cref{subsubsec:two_inters}. From the left: triangular mesh, coarse mesh with
    $c_{depth}=2$, coarse mesh with $c_{depth}=4$, and pressure in $\gamma$ from
    the $c_{depth} = 2$ case.
    The solution has range in $[0,1]$ in the fractures and in $[0,1.25]$ in the intersection,
    a ``Blue to Red Rainbow'' colour map is used.}%
    \label{fig:sol3}
\end{figure}
In \cref{fig:error3} we report the $L^2$-relative errors for both the projected
velocity $\Pi_0 \bm{u}$ and the pressure $p$. We observe that the error decay
for the latter is quadratic and the former is linear with respect to the mesh
size, for the three families of mashes.
\begin{figure}[tbp]
    \centering
    \includegraphics{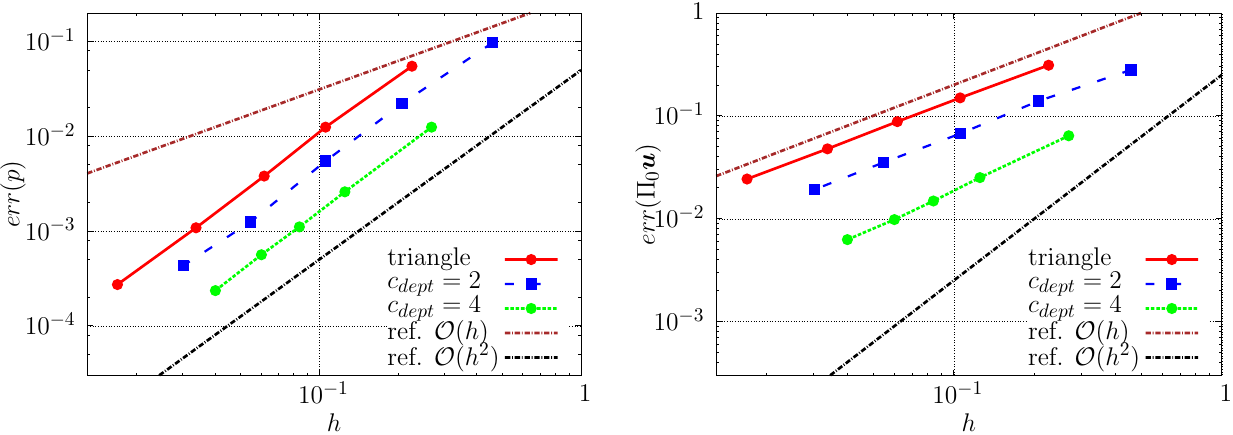}
    \caption{In the figure the error decay for both $p$, in the left plot, and
        $\Pi_0 \bm{u}$ for the third example in \cref{subsec:convergence}. We add
        the references $\mathcal{O}(h)$ and
        $\mathcal{O}(h^2)$ to easy the comparison.}%
    \label{fig:error3}
\end{figure}
In \cref{fig:error3_I} we report the
$L^2$-relative errors for both the projected velocity $\hat{\Pi}_0 \hat{\bm{u}}$ and the
pressure $\hat{p}$ for the solution in the intersection $\gamma$. The mesh size is now
referred to the intersection mesh. Also in this case
the slope of the pressure error is nearly quadratic for the three families, but
also the velocity (especially for the triangular and coarse-$c_{depth} = 4$
families) has an error that behave quadratically. This is a possible super-convergence
property of the numerical scheme in one space dimension.
\begin{figure}[tbp]
    \centering
    \includegraphics{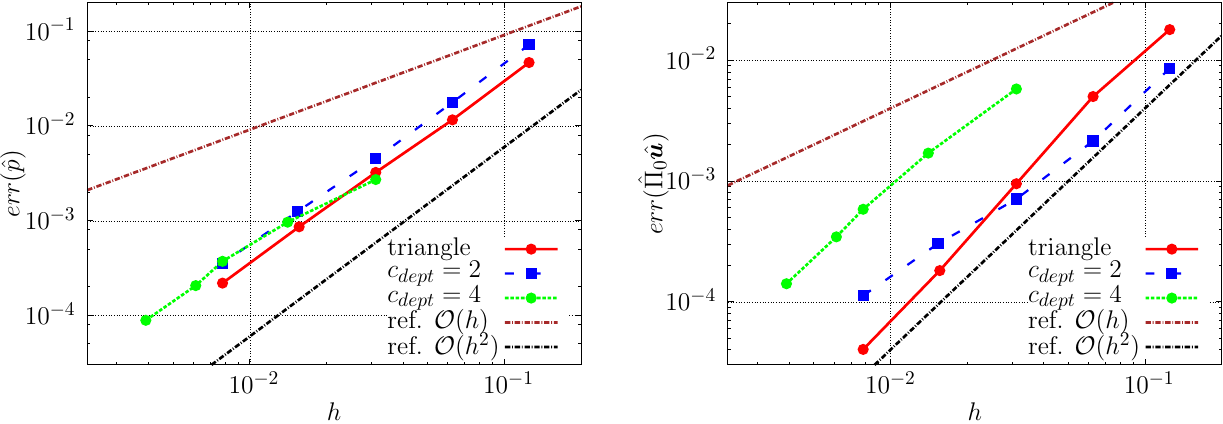}
    \caption{In the figure the error decay for both $\hat{p}$, in the left plot, and
        $\hat{\Pi}_0 \hat{\bm{u}}$ for the third example in \cref{subsec:convergence}. We add
        the references $\mathcal{O}(h)$ and
        $\mathcal{O}(h^2)$ to easy the comparison.}%
    \label{fig:error3_I}
\end{figure}
\cref{tag:errors_third} and \cref{tag:errors_thirdInters} in
\cref{sec:appendix} reports the detailed values for this example, as well as the
sparsity of the matrix, the minimum and maximum values of the solution, number
of edges for each cell.

We conclude the section commenting that in all the tests performed the error decay,
for both the pressure and velocity, is coherent with the analyses presented in
\cite{BeiraoVeiga2016}.


\subsection{Modeling of intersection flow}\label{subsubsect:isect}
\FIX{}{%
Having established the convergence properties of the virtual element method with
and without flow in the intersection, we next focus on the modeling aspects of
\cref{pb:cc} and \cref{pb:dc}. To that end, we consider a network of four
fractures, defined by


\begin{eqnarray*}
\Omega_1 = &&\left\{ (x,y,z) \in \mathbb{R}^3: 0\leq x \leq 1, y=0, 0 \leq z \leq 1
\right\}\\
\Omega_2 = &rot\big(&\left\{ (x,y,z) \in \mathbb{R}^3: 1/5\leq x-z\leq 6/5, -1/5 \leq x+z\leq 4/5, y=0
\right\}, \\
& &2\pi/3, (1/2, 0, 0), (0, 0, 1) \big)\\
\Omega_3 =& rot\big(&\left\{ (x,y,z) \in \mathbb{R}^3: \nu\leq x \leq 1/2 - \nu, y=0,
1/2 + \nu\leq z \leq 1 + \nu,
\right\}, \\
& &\pi/6, (1/2, 0, 1/2), (1, 0, -1)\big) \\
\Omega_4 =& rot\big(&\left\{ (x,y,z) \in \mathbb{R}^3: 1/2 + \nu \leq x \leq 1 + \nu, y=0,
1/2 + \nu\leq z \leq 1 +\nu,
\right\}, \\
&&\pi/6, (1/2, 0, 1/2), (1, 0, -1)\big).
\end{eqnarray*}

Here we have introduced the short-form $\nu=1/(5\sqrt{2})$ and $rot(\omega,
\theta, v, w)$ denotes the rotation of domain $\omega$ with an angle $\theta$
around the line parametrized as $l(t)=v + wt$.  The rotation angles are
immaterial to the solution provided they are non-zero; the values chosen are
motivated by clarity of visualization.

A source of unit strength is assigned in $(1/2, 0, 1/2)$, that is, in
$\Omega_1$.  For the other fractures, we assign homogeneous Dirichlet
conditions on the faces \FIX{}{in the corners furthest away from the injection points}.
On the remaining boundary edges we impose no-flux
conditions.
In terms of sources and boundary conditions, the problem is symmetric with respect to
the fractures $\Omega_2$, $\Omega_3$ and $\Omega_4$.
We note that, due to the Neumann condition, this problem is outside
the class considered in \cref{sec:well_posedness}.

In the intersection between $\Omega_1$ and
$\Omega_2$, the effective normal permeability $\tilde{\lambda}_m$ is set to
$10^{-7}$. The intersection between $\Omega_1$ and $\Omega_3$ has tangential
effective permeability $\hat{\lambda}_k=10^{-10}$, effectively employing
\cref{pb:cc} here, while for the intersection of $\Omega_1$ and $\Omega_4$,
$\hat{\lambda}_k=10^{10}$.  All other permeabilities have unit value.
\begin{figure}
    \centering
    \resizebox{0.4\textwidth}{!}{\fontsize{45pt}{17.2}\selectfont%
\begingroup%
  \makeatletter%
  \providecommand\color[2][]{%
    \errmessage{(Inkscape) Color is used for the text in Inkscape, but the package 'color.sty' is not loaded}%
    \renewcommand\color[2][]{}%
  }%
  \providecommand\transparent[1]{%
    \errmessage{(Inkscape) Transparency is used (non-zero) for the text in Inkscape, but the package 'transparent.sty' is not loaded}%
    \renewcommand\transparent[1]{}%
  }%
  \providecommand\rotatebox[2]{#2}%
  \ifx\svgwidth\undefined%
    \setlength{\unitlength}{400.00000961bp}%
    \ifx\svgscale\undefined%
      \relax%
    \else%
      \setlength{\unitlength}{\unitlength * \real{\svgscale}}%
    \fi%
  \else%
    \setlength{\unitlength}{\svgwidth}%
  \fi%
  \global\let\svgwidth\undefined%
  \global\let\svgscale\undefined%
  \makeatother%
  \begin{picture}(1,1.15232976)%
    \put(0,0){\includegraphics[width=\unitlength,page=1]{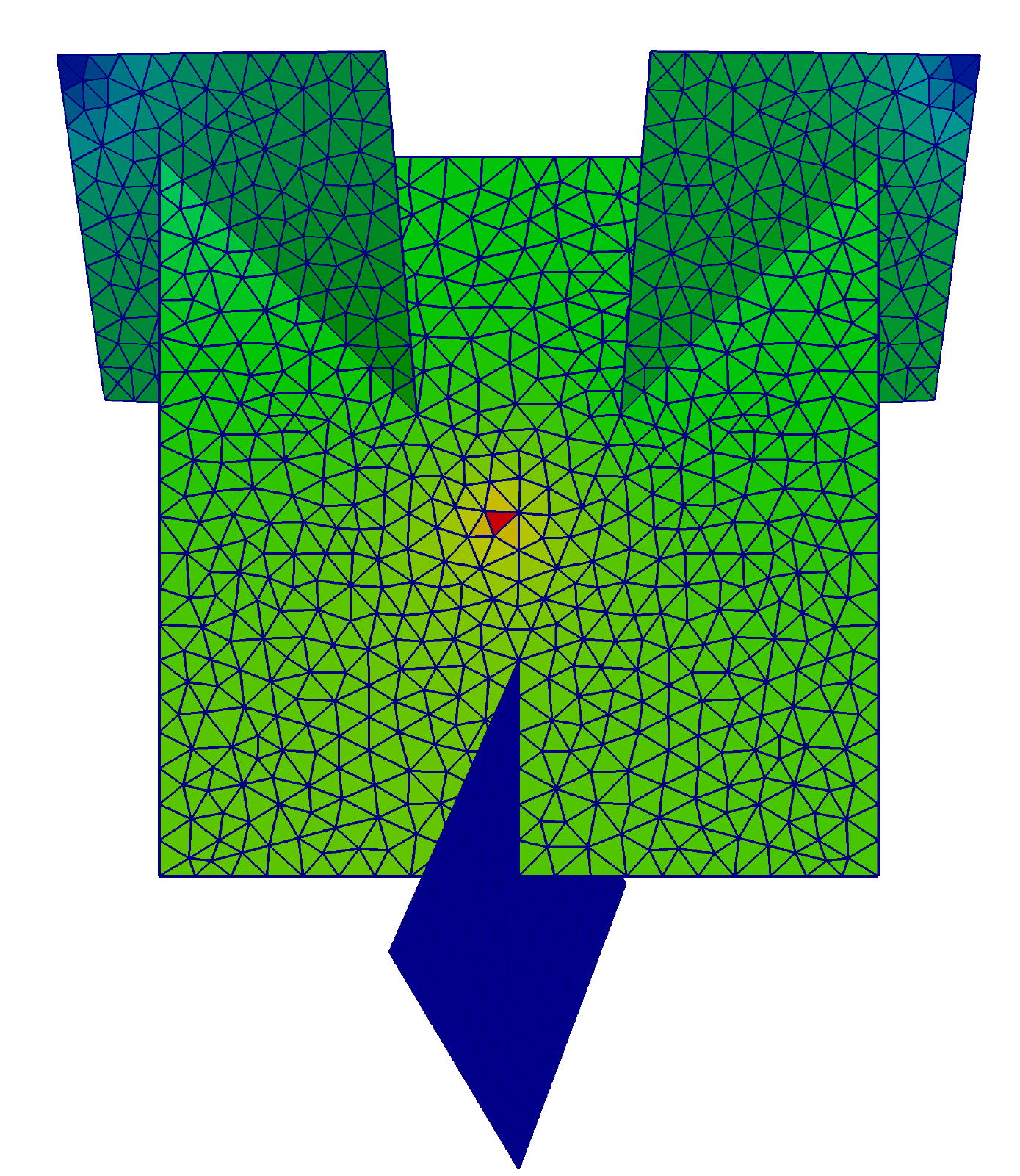}}%
    \put(0.52676236,0.02560891){\color[rgb]{0,0,0}\makebox(0,0)[lb]{\smash{$\Omega_2$}}}%
    \put(0.86434599,0.35819347){\color[rgb]{0,0,0}\makebox(0,0)[lb]{\smash{$\Omega_1$}}}%
    \put(0.02241929,0.80772125){\color[rgb]{0,0,0}\makebox(0,0)[lb]{\smash{$\Omega_3$}}}%
    \put(0.56109824,1.06232108){\color[rgb]{0,0,0}\makebox(0,0)[lb]{\smash{$\Omega_4$}}}%
  \end{picture}%
\endgroup%
        }
    \resizebox{0.4\textwidth}{!}{\fontsize{45pt}{17.2}\selectfont%
\begingroup%
  \makeatletter%
  \providecommand\color[2][]{%
    \errmessage{(Inkscape) Color is used for the text in Inkscape, but the package 'color.sty' is not loaded}%
    \renewcommand\color[2][]{}%
  }%
  \providecommand\transparent[1]{%
    \errmessage{(Inkscape) Transparency is used (non-zero) for the text in Inkscape, but the package 'transparent.sty' is not loaded}%
    \renewcommand\transparent[1]{}%
  }%
  \providecommand\rotatebox[2]{#2}%
  \ifx\svgwidth\undefined%
    \setlength{\unitlength}{400.00000961bp}%
    \ifx\svgscale\undefined%
      \relax%
    \else%
      \setlength{\unitlength}{\unitlength * \real{\svgscale}}%
    \fi%
  \else%
    \setlength{\unitlength}{\svgwidth}%
  \fi%
  \global\let\svgwidth\undefined%
  \global\let\svgscale\undefined%
  \makeatother%
  \begin{picture}(1,1.15232976)%
    \put(0,0){\includegraphics[width=\unitlength,page=1]{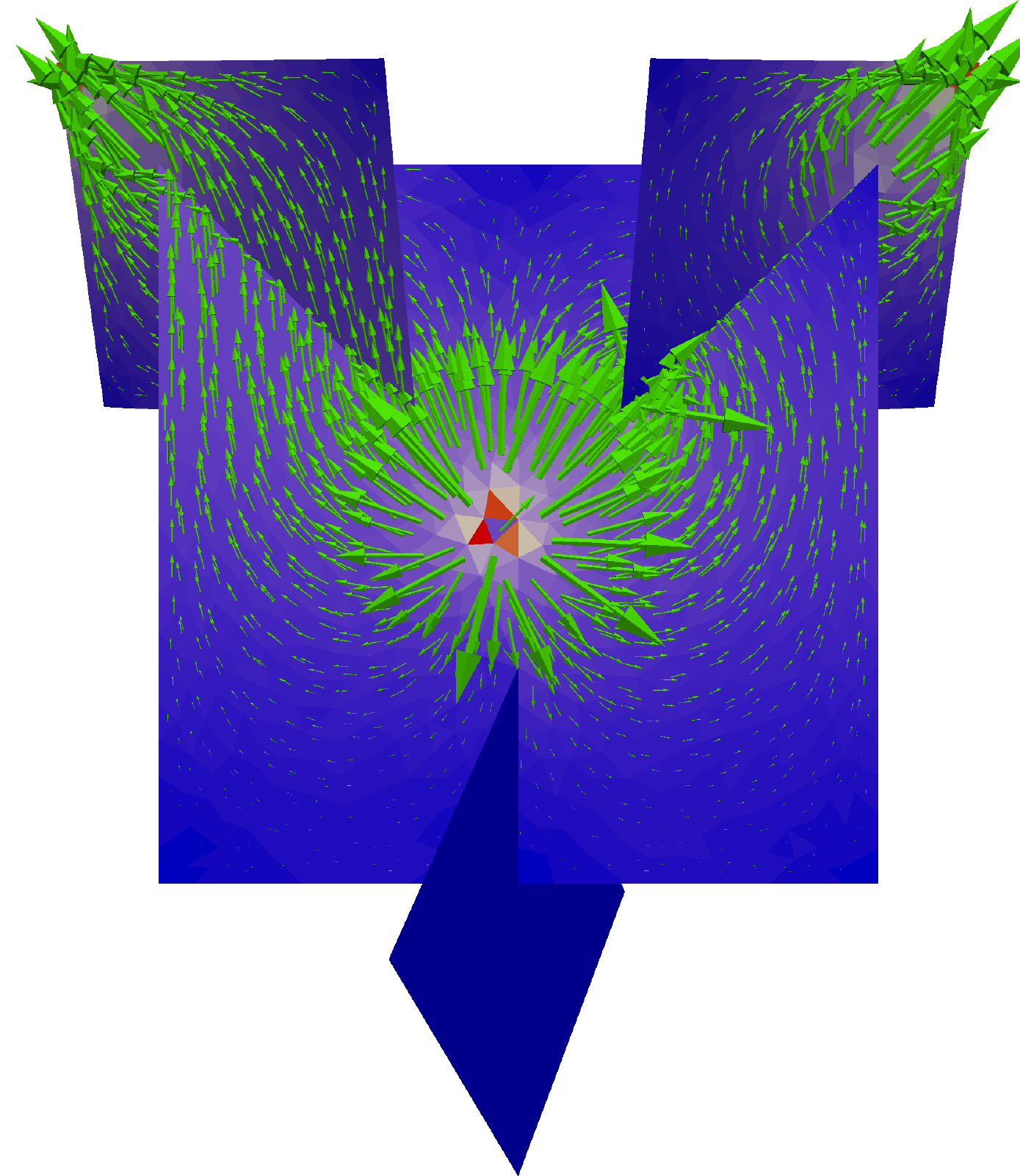}}%
    \put(0.52676236,0.02560885){\color[rgb]{0,0,0}\makebox(0,0)[lb]{\smash{$\Omega_2$}}}%
    \put(0.864346,0.35819352){\color[rgb]{0,0,0}\makebox(0,0)[lb]{\smash{$\Omega_1$}}}%
    \put(0.02241928,0.80772129){\color[rgb]{0,0,0}\makebox(0,0)[lb]{\smash{$\Omega_3$}}}%
    \put(0.56109824,1.0623211){\color[rgb]{0,0,0}\makebox(0,0)[lb]{\smash{$\Omega_4$}}}%
  \end{picture}%
\endgroup%
        }
    \caption{Left: Pressure profile obtained for the family of four intersecting
    fractures. Right: Cell-wise velocity magnitudes, and also velocity vectors.
    Arrows close to the source and the outlets are not shown.}
    \label{fig:four_fractures}
\end{figure}
Pressure and velocity profiles are shown in \cref{fig:four_fractures}. Due to
the low $\tilde{\lambda}_m$, $\Omega_2$ is effectively sealed off from the other
fractures, with a pressure that is virtually equal to the outlet value in the
entire fracture.  The pressure difference between $\Omega_3$ and $\Omega_4$ is
negligible. However, as seen from the velocity field, flow into $\Omega_4$ is
channelized into the intersection. For $\Omega_3$ there is no flow in the
intersection, but a wider sweep of the fracture plane itself compared to
$\Omega_4$.
}


\subsection{Realistic DFN} \label{subsubsec:realistic}


\FIX{The previous examples established the convergence properties of our numerical
methods.}{} In this final example, we illustrate the robustness of the approach
for general fracture geometries.  To that end, we consider a stochastically
generated network of 60 fractures using the software described in
\cite{FracSim3D}.  The network is shown in \cref{fig:ex_realistic_geom}.  The
fracture permeability is set to unity, and for simplicity we only consider the
model presented in \cref{pb:cc}.  Each fracture is discretized by 8 boundary
edges, we impose no-flux boundary condition on all the edges of the fractures
except for two of them. A pressure $p_1 = 1$ and $p_2 = 0$ is thus imposed.
Again we note that the no-flow conditions are not covered by the analysis in
\cref{sec:well_posedness}.

\begin{figure}[tbp]
    \centering
    \includegraphics{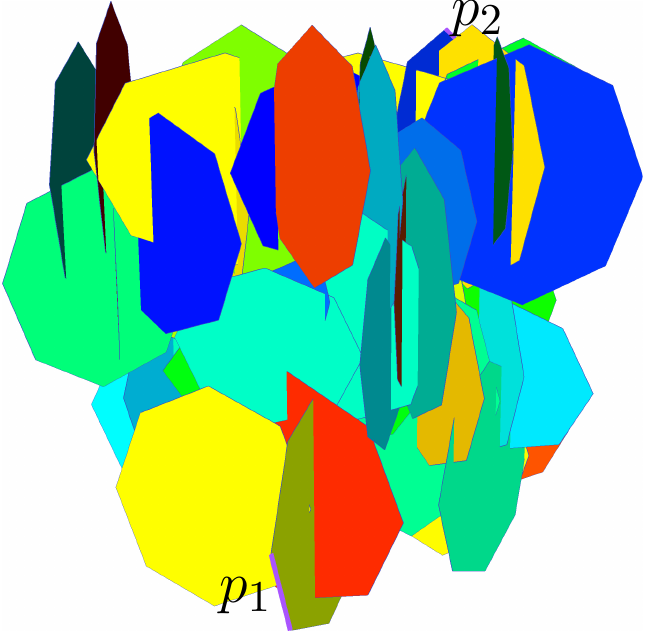}
    \caption{Geometry used for the example in \cref{subsubsec:realistic}. Each
    fracture is coloured by its identification number and the pressure boundary
    conditions are highlight using magenta segments.}%
    \label{fig:ex_realistic_geom}
\end{figure}

The stochastic generation of the fracture network leads to intersection
configurations \FIX{that in general may be difficult in terms of meshing.  This
is however not the case for our approach, where the computational mesh is
defined by triangulations of the individual fratures independently, and then
applying the coarsening algorthm with $c_{depth}=2$.  The resulting  grid, with
$24444$ cells and $78574$ edges, is depicted in \cref{fig:ex_realistic}.} {that
may pose difficulties for meshing, in particular if the fracture planes are
coupled during grid creation.  Our approach of independent gridding of each
fracture avoids the linkage of nodes in different fracture planes, but it still
create small cells close to short constraints and almost parallel constraints.
The coarsening algorithm, here applied with $c_{depth}=2$ increases the cell
size. The polygonal mesh has $24444$ cells and $78574$ edges, compared with
$106809$ cells for the triangular grid. The impact of the coarsening is
illustrated in \cref{fig:coarsening_realistic_zoom}.
\begin{figure}
    \centering
    \includegraphics[width=0.4\textwidth]{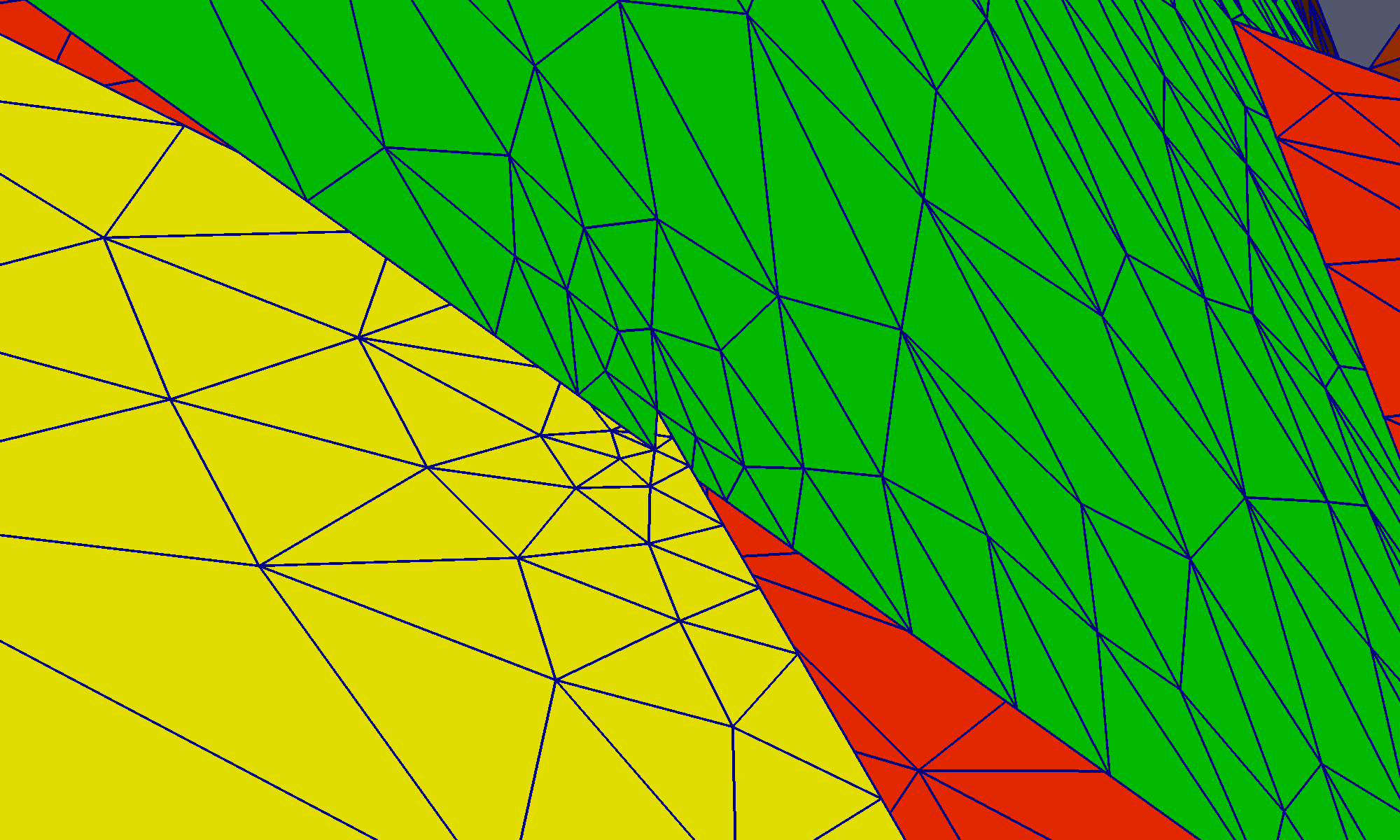}
    \includegraphics[width=0.4\textwidth]{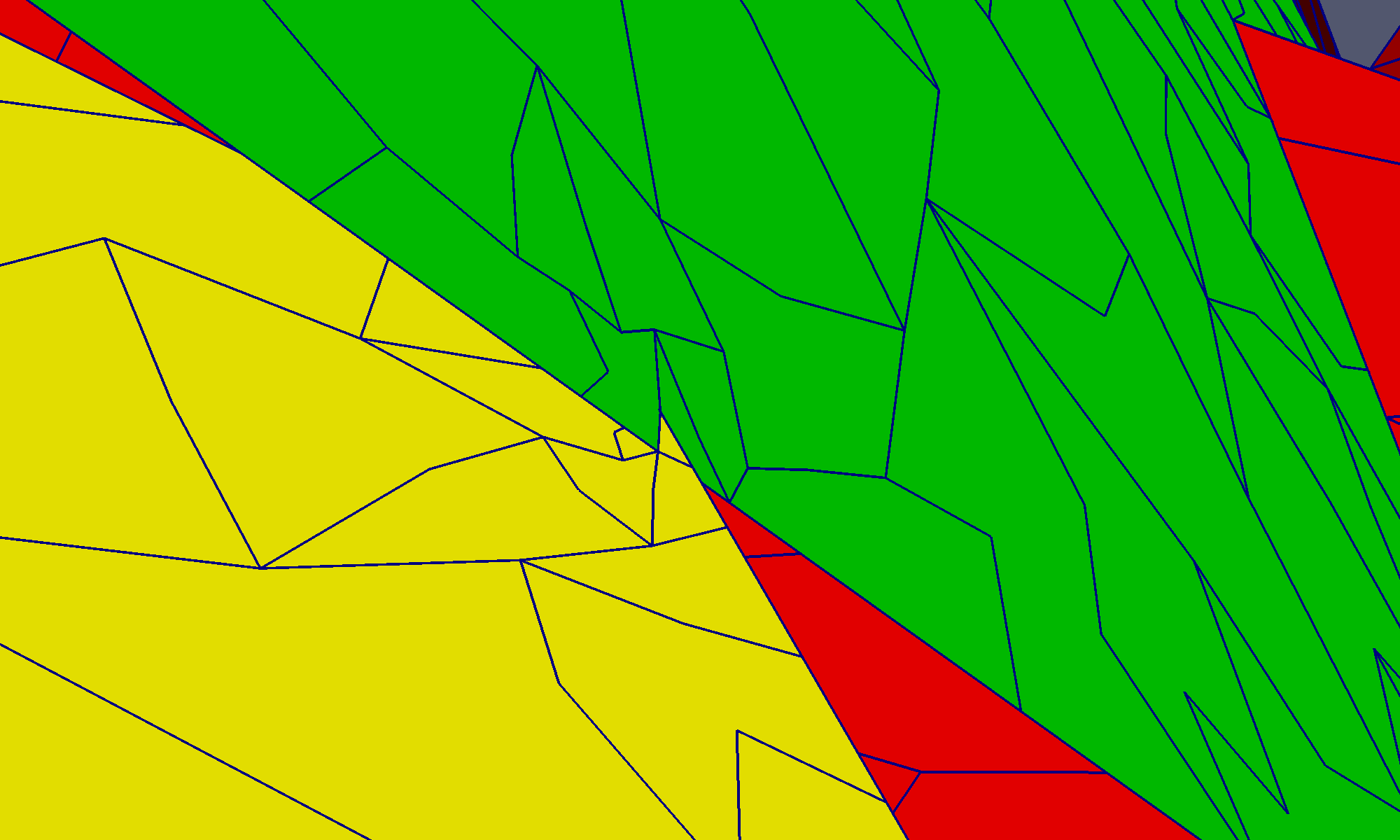}
    \includegraphics[width=0.4\textwidth]{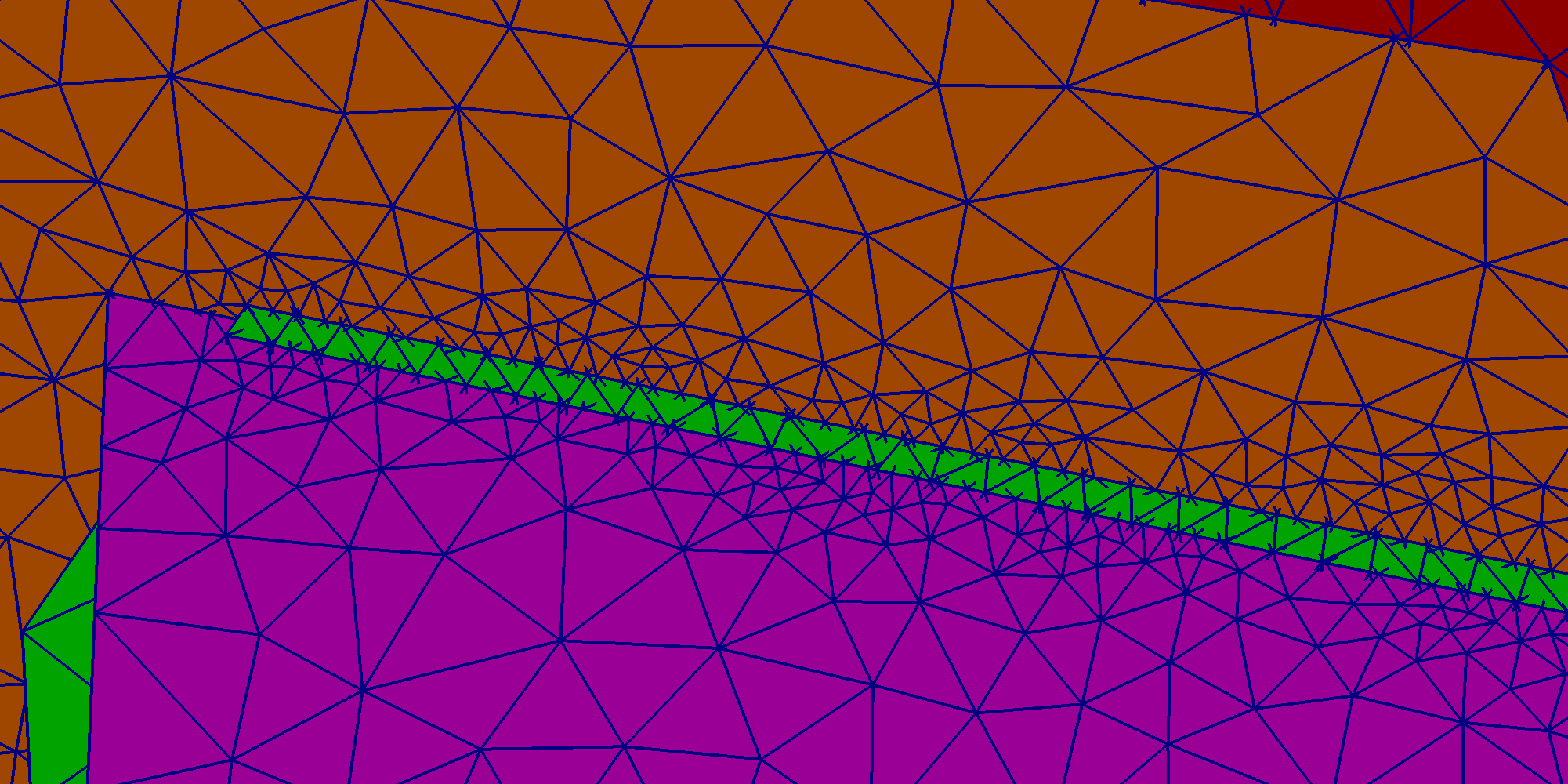}
    \includegraphics[width=0.4\textwidth]{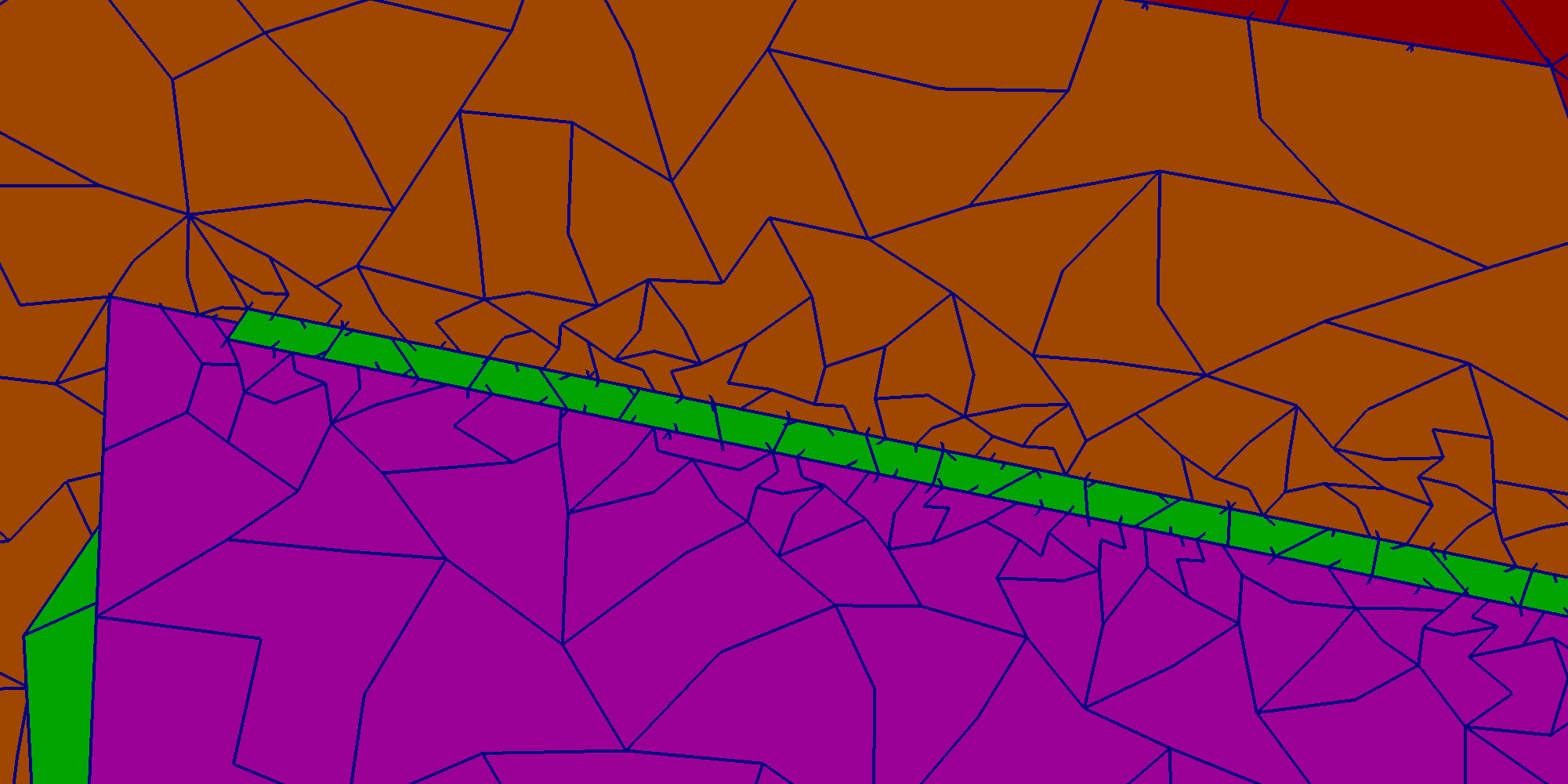}
    \caption{Details of the mesh used in \cref{subsubsec:realistic}. Left
    column: Original triangular grid. Right: Polygonal grid obtained after
    coarsening.}%
    \label{fig:coarsening_realistic_zoom}
\end{figure}
}

\begin{figure}[tbp]
    \centering
    \includegraphics[width=0.495\textwidth]{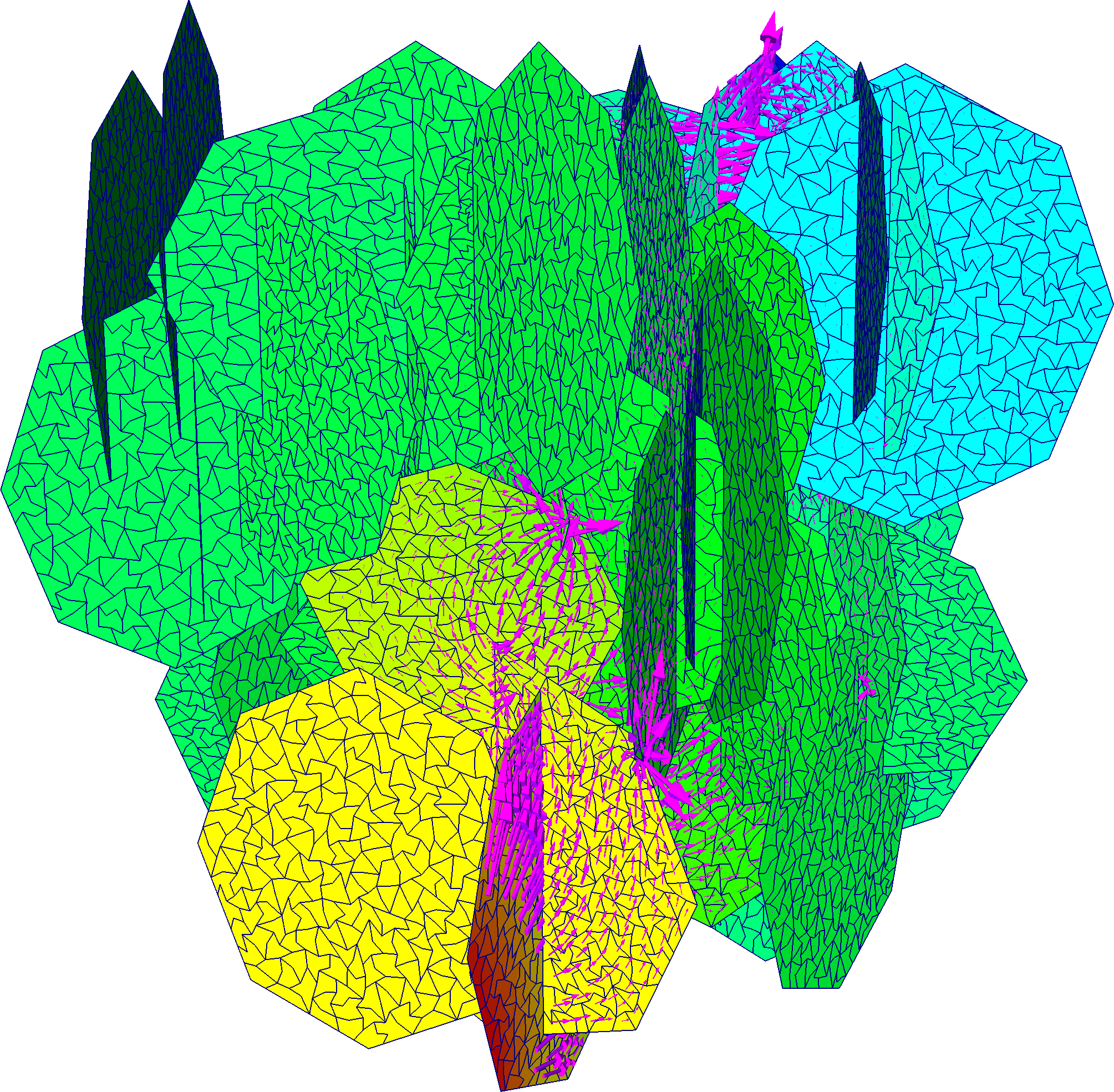}%
    \hfill%
    \raisebox{1cm}{\includegraphics[width=0.495\textwidth]{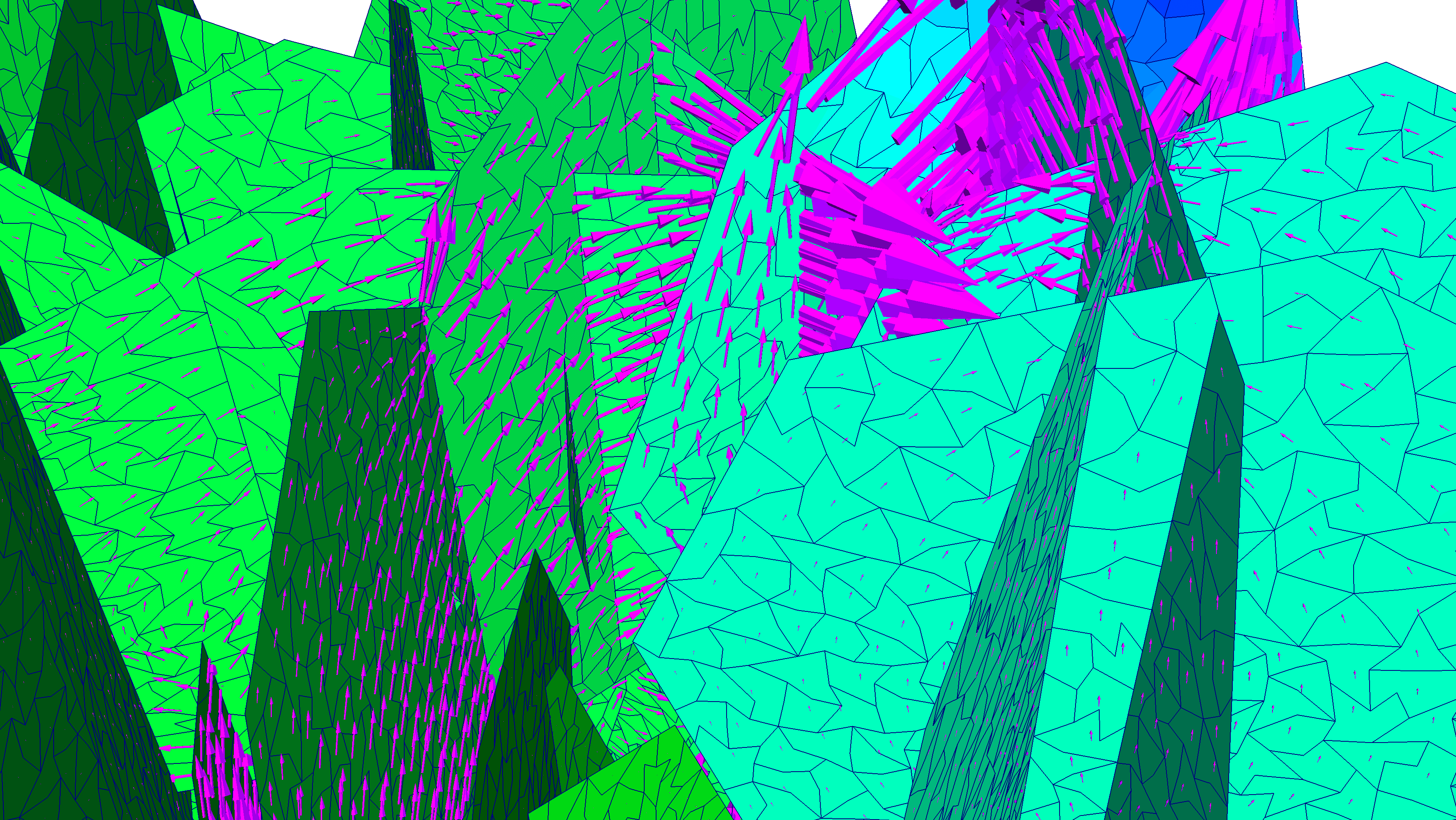}}%
    \caption{Solution obtained with a realistic discrete fracture network. The
    figure depicts both pressure and velocity, the latter represented by magenta
    arrows. In the right the full system is represented while in the left a
    detail close to the outflow boundary is shown.  The pressure has range in $[0,1]$ and
    a ``Blue to Red Rainbow'' colour map is used. The velocity is restricted to
    ease the interpretation and the arrows are scaled with the velocity
    magnitude.}%
    \label{fig:ex_realistic}
\end{figure}
\cref{fig:ex_realistic} shows the pressure solution and the projected
velocity field, together a zoom-in in the vicinity of the outflow.
As indicated by the figures, the numerical method captures high flow velocities in the
in- and outflow fractures, as well as interaction between the fracture planes.
\FIX{ We can conclude that the proposed scheme effectively handles also complex
fracture geometries and the coarsening algorithm is able to reduce the number of
degrees of freedom without affect the quality of the solution.} { It is
worthwhile to consider how other conservative methods could have discretized the
network in \cref{fig:ex_realistic_geom}. A key property of the virtual element
method, and the closely related mimetic finite difference method, is the ability
to handle hanging nodes and general polygon grids.  A discretization by \eg
mixed finite elements would have required not only the triangular grid with
about four times as many cells, but also techniques to eliminate hanging nodes
at the intersections, which may further increase the cell number. Another
possibility is to consider the class of finite volume methods which are,
generally, more robust with respect to the shape of the cells. See for instance,
\cite{brenner:hal-01192740}.}

%
%


\section{Conclusions} \label{sec:conclusions}


In this paper we presented a novel approximation for a complex network of
fractures using reduced models to describe fracture flows. The reduced models
are a reasonable approximation when the thickness of each fracture is some orders
of magnitude smaller then its other characteristic sizes and smaller than the
typical size of the surrounding rock matrix. In particular two models are
presented to describe the flow at the intersection of fractures, with the
possibility to allow a tangential flow along the intersection. \FIX{Moreover
the choice of the virtual finite element method allows us to handle, in a much
robust and accurate way, different problem configurations and, partially, solve
some of the difficulties related to the mesh generation.}{The virtual element
method's ability to handle general polygons with hanging nodes allows us to
relax constraints on the gridding of fracture intersections, and also apply grid
coarsening.} In the examples we saw the error decay of the solution, both
pressure and Darcy velocity, behave as expected in all the cases. Finally we
noticed that the solution behaves as expected and no evidence of serious
contraindications is present, thus the algorithm presented seems a promising
tool for the numerical approximation for this type of problems.

%
%


\section*{Acknowledgment}

    We acknowledge financial support for the ANIGMA project from the Research
    Council of Norway (project no. 244129/E20) through the ENERGIX program.
    The authors warmly thank the others components of the
    ANIGMA project team:
    Eivind Bastesen,
    Inga Berre,
    Simon John Buckley,
    Casey Nixon,
    David Peacock,
    Atle Rotevatn,
    P{\aa}l N{\ae}verlid S{\ae}vik,
    Luisa F. Zuluaga.
    The authors wish to thank also:
    Louren\c{c}o Beir\~{a}o da Veiga,
    Wietse Boon,
    Franco Dassi,
    Luca Formaggia,
    Eren U{\c c}ar.

%
%

\appendix

\section{Details of coarsening algorithm} \label{sec:coarseapp}

Here we describe the coarsening algorithm in some detail.
For further information, confer \cite{Trottenberg2001}.
Define two parameters $\epsilon_{str} \in (0,1)$ and
$c_{depth} \in \mathbb{N}$. An element $i$, called $C$-element,
in the original mesh will be connected, in the coarse grid, to some elements
in the set $N_i$, called $F$-elements, those affect the error at $i$ most. Since the
algorithm is matrix base the previous request aim to find the $F$-elements $j$
such that $\abs{[A]_{ij}}$ is the largest in some sense. We say that a degree of
freedom $i$ is ``strongly negatively coupled'' (SNC) to $j$ if
\begin{gather*}
    - [A]_{ij} \geq \epsilon_{str} \max_{[A]_{ik}<0} \abs{[A]_{ik}}
    \quad \text{and we have} \quad
    S_i \defeq \left\{ j \in N_i: i \text{ is SNC to } j \right\}.
\end{gather*}
Since the relation of being SNC is not symmetric we introduce $S_i^\top \defeq
\left\{ j: i \in S_j \right\}$, which is the set of elements which are strongly
coupled to $i$. Starting from a $C$-element $i$ the algorithm connect all the
$F$-elements $j$ that are strongly coupled to $i$ to create a coarse element.
The process is repeated considering another $C$-element of the mesh. As
highlight in \cite{Trottenberg2001}, the selection of a new $C$-element $i$ is
based on the measure of importance $\lambda_i$ in the set of undecided elements
$U$, \ie nor $C$ or $F$-elements. We define
\begin{gather*}
    \lambda_i \defeq \sharp\left( S_i^\top \cap U \right) + 2 \sharp \left(
    S_i^\top \cap F \right) \quad \text{with} \quad i \in U,
\end{gather*}
where $F$ is the set of $F$-elements. The next $C$-element will be the element
with greater $\lambda_i$. The presented algorithm is repeated for
$c_{depth}$-times where the initial mesh is the coarse mesh obtained at the
previous step. The greater $c_{depth}$ the bigger will be elements in the final coarse
grid.

\begin{remark}
    As point out in \cite{Trottenberg2001} a reasonable value for
    $\epsilon_{str}$ is $0.25$. We note also that in the case of permeability
    with strong anisotropy the final mesh is able to better represent the
    underling physic of the problem. See \cref{subsec:coarsening} for a
    discussion.
\end{remark}
\begin{remark}
    To avoid pathological shapes of elements during the coarsening process at
    the tip of the intersection we impose a priori the triangles, which share the
    tip of the intersection and have an edge on the intersection, as $C$-elements.
\end{remark}

\section{Tables} \label{sec:appendix}

In this section we present the detailed values for the examples in
\cref{sec:examples}. In \cref{tag:errors_first} are reported the values for the
test of \cref{subsubsec:single}. In \cref{tag:errors_second} are reported the
values for the test of \cref{subsubsec:two}.

\begin{table}[tbp] \footnotesize
    \begin{tabular}{|@{}*{1}{>{\centering\arraybackslash}p{.025\textwidth}}@{}*{9}{>{\centering\arraybackslash}p{.108\textwidth}@{}}|}
        \hline
        \multicolumn{10}{|c|}{Coarse mesh family}\\
        \hline
        $\sharp$ & $h$& $err(p)$ & $\mathcal{O}(p)$ & $err({\Pi_0 \bm{u}})$ &
        $\mathcal{O}(\Pi_0 \bm{u})$ & $\min p$ & $\max p$ & size & sparsity \\
        \hline
        1 & 3.162e-1 & 3.340e-2 & -     & 2.713e-1 & -      & 3.434e-3 & 1.228  & 152    & 1.158e-1 \\
        2 & 1.581e-1 & 9.774e-3 & 1.773 & 9.639e-2 & 1.493 & 4.958e-4 & 1.2642 & 584    & 3.321e-2 \\
        3 & 7.906e-2 & 2.537e-3 & 1.946 & 3.067e-2 & 1.652 & 7.474e-5 & 1.2947 & 2288   & 8.885e-3 \\
        4 & 3.953e-2 & 6.348e-4 & 1.998 & 9.360e-3 & 1.712 & 1.049e-5 & 1.3193 & 9056   & 2.292e-3 \\
        5 & 1.976e-2 & 1.577e-4 & 2.009 & 2.839e-3 & 1.721 & 1.393e-6 & 1.3308 & 36032  & 5.806e-4 \\
        6 & 9.882e-3 & 3.921e-5 & 2.008 & 8.742e-4 & 1.699 & 1.795e-7 & 1.3335 & 143744 & 1.467e-4 \\
        \hline
    \end{tabular}
    \begin{tabular}{|@{}*{1}{>{\centering\arraybackslash}p{.025\textwidth}}@{}*{9}{>{\centering\arraybackslash}p{.108\textwidth}@{}}|}
        \hline
        \multicolumn{10}{|c|}{Cartesian mesh family}\\
        \hline
        $\sharp$ & $h$& $err(p)$ & $\mathcal{O}(p)$ & $err({\Pi_0 \bm{u}})$ &
        $\mathcal{O}(\Pi_0 \bm{u})$ & $\min p$ & $\max p$ & size & sparsity \\
        \hline
        1 & 1.414e-1 & 4.099e-2 & -     & 1.936e-1 & -     & -7.991e-4 & 1.283  & 320   & 1.793e-2 \\
        2 & 7.071e-2 & 1.061e-2 & 1.949 & 5.923e-2 & 1.709 & -2.346e-5 & 1.3055 & 1240  & 4.646e-3 \\
        3 & 3.535e-2 & 2.682e-3 & 1.984 & 1.715e-2 & 1.788 & 5.599e-7  & 1.3211 & 4880  & 1.234e-3 \\
        4 & 1.768e-2 & 6.728e-4 & 1.995 & 4.807e-3 & 1.835 & 2.797e-7  & 1.3291 & 19360 & 3.162e-4  \\
        5 & 8.839e-3 & 1.684e-4 & 1.999 & 1.319e-3 & 1.866 & 4.767e-8  & 1.3329 & 77120 & 7.96e-5 \\
        6 & 4.419e-4 & 4.211e-5 & 1.999 & 3.567e-4 & 1.887 & 6.738e-9  & 1.335 & 307840 & 2.02e-5 \\
        \hline
    \end{tabular}
    \begin{tabular}{|@{}*{1}{>{\centering\arraybackslash}p{.025\textwidth}}@{}*{9}{>{\centering\arraybackslash}p{.108\textwidth}@{}}|}
        \hline
        \multicolumn{10}{|c|}{Random mesh family}\\
        \hline
        $\sharp$ & $h$& $err(p)$ & $\mathcal{O}(p)$ & $err({\Pi_0 \bm{u}})$ &
        $\mathcal{O}(\Pi_0 \bm{u})$ & $\min p$ & $\max p$ & size & sparsity \\
        \hline
        1 & 1.171e-1 & 3.4794e-2 & -     & 2.434e-1 & -     & -8.777e-4 & 1.2988 & 320    & 2.168e-2 \\
        2 & 4.675e-2 & 9.8991e-3 & 1.367 & 1.032e-1 & 0.933 &  1.684e-6 & 1.3159 & 1240   & 5.749e-3 \\
        3 & 1.975e-2 & 2.5395e-3 & 1.581 & 4.993e-2 & 0.844 &  8.565e-6 & 1.328  & 4880   & 1481e-3 \\
        4 & 8.188e-3 & 6.5716e-4 & 1.535 & 2.481e-2 & 0.794 &  1.920e-6 & 1.3335 & 19360  & 3761e-4 \\
        5 & 4.04e-3  & 1.6608e-4 & 1.947 & 1.247e-2 & 0.974 &  2.948e-7 & 1.3353 & 77120  & 9.475e-5 \\
        6 & 1.843e-3 & 4.2e-5    & 1.752 & 6.218e-3 & 0.887 & 2.4239e-08& 1.3362 & 307840 & 2.378e-5  \\
        \hline
    \end{tabular}
    \begin{tabular}{|@{}*{1}{>{\centering\arraybackslash}p{.025\textwidth}}@{}*{9}{>{\centering\arraybackslash}p{.108\textwidth}@{}}|}
        \hline
        \multicolumn{10}{|c|}{Triangular mesh family}\\
        \hline
        $\sharp$ & $h$& $err(p)$ & $\mathcal{O}(p)$ & $err({\Pi_0 \bm{u}})$ &
        $\mathcal{O}(\Pi_0 \bm{u})$ & $\min p$ & $\max p$ & size & sparsity \\
        \hline
        1 & 8.86e-2  & 2.928e-2 & -     & 3.435e-1 &  -    & -1.449e-3 & 1.3089 & 428    & 1.224e-2 \\
        2 & 4.221e-2 & 7.238e-3 & 1.885 & 1.572e-1 & 1.024 & -1.971e-4 & 1.3157 & 1627   & 3.266e-3 \\
        3 & 2.029e-2 & 1.775e-3 & 1.919 & 7.426e-2 & 1.024 & -1.849e-5 & 1.3296 & 6616   & 8.098e-4 \\
        4 & 1.014e-2 & 4.265e-4 & 2.055 & 3.597e-2 & 1.045 & -1.793e-6 & 1.3319 & 26405  & 2.037e-4  \\
        5 & 4.594e-3 & 1.078e-4 & 1.738 & 1.807e-2 & 0.869 & -2.125e-7 & 1.3348 & 105000 & 5.133e-5 \\
        6 & 2.204e-3 & 2.771e-5 & 1.849 & 9.167e-3 & 0.924 & -5.644e-8 & 1.3358 & 408068 & 1.322e-05 \\
        \hline
    \end{tabular}
    \caption{For each table we report the values for the discretization ($h$),
    errors ($err(p)$ and $err({\Pi_0 \bm{u}})$) and order of convergence
    ($\mathcal{O}(p)$ and $\mathcal{O}(\Pi_0 \bm{u})$) for the example in
    \cref{subsec:convergence}.
    The last columns are devoted to the minimum and maximum principle, number of
    rows (or columns) of the matrix and sparsity.  For the name used in each table, consider
    the terminology reported in the aforementioned subsection.}%
    \label{tag:errors_first}
\end{table}


\begin{table}[tbp] \footnotesize
    \begin{tabular}{|@{}*{1}{>{\centering\arraybackslash}p{.025\textwidth}}@{}*{10}{>{\centering\arraybackslash}p{.097\textwidth}@{}}|}
        \hline
        \multicolumn{11}{|c|}{Triangular mesh family}\\
        \hline
        $\sharp$ & $h$& $err(p)$ & $\mathcal{O}(p)$ & $err({\Pi_0 \bm{u}})$ &
        $\mathcal{O}(\Pi_0 \bm{u})$ & $\sharp$ faces & $\min p$ & $\max p$ & size & sparsity \\
        \hline
        1 & 2.261e-1 & 3.87e-2  & -     & 3.022e-1 & -      & 3,3,3 & 5.949e-3 & 3.6581 & 646   & 7.96e-3 \\
        2 & 1.058e-1 & 8.160e-3 & 2.048 & 1.452e-1 & 0.964 & 3,3,3 & 5.063e-4 & 3.8417 & 3014  & 1.75e-3 \\
        3 & 7.461e-2 & 4.209e-3 & 1.898 & 1.082e-1 & 0.843 & 3,3,3 & 5.492e-4 & 3.8612 & 6070  & 8.76e-4 \\
        4 & 6.163e-2 & 2.818e-3 & 2.099 & 8.737e-2 & 1.120  & 3,3,3 & 3.625e-4 & 3.8900 & 8902  & 5.99e-4 \\
        5 & 3.382e-2 & 8.337e-4 & 2.029 & 4.881e-2 & 0.970 & 3,3,3 & 9.553e-5 & 3.9453 & 29626 & 1.81e-4 \\
        6 & 2.404e-2 & 4.235e-4 & 1.984 & 3.447e-2 & 1.019 & 3,3,3 & 3.394e-5 & 3.9620 & 58233 & 9.23e-5 \\
        \hline
    \end{tabular}
    \begin{tabular}{|@{}*{1}{>{\centering\arraybackslash}p{.025\textwidth}}@{}*{10}{>{\centering\arraybackslash}p{.097\textwidth}@{}}|}
        \hline
        \multicolumn{11}{|c|}{Coarse mesh family with $c_{depth} = 2$}\\
        \hline
        $\sharp$ & $h$& $err(p)$ & $\mathcal{O}(p)$ & $err({\Pi_0 \bm{u}})$ &
        $\mathcal{O}(\Pi_0 \bm{u})$ & $\sharp$ faces & $\min p$ & $\max p$ & size & sparsity \\
        \hline
        1 & 4.586e-1 & 9.771e-2 & -     & 3.484e-1 & -     & 5,6,9 & 5.135e-3 & 3.5731 & 268   & 3.659e-2 \\
        2 & 2.070e-1 & 2.080e-2 & 1.945 & 1.46e-1  & 1.094 & 5,6,9 & 3.256e-3 & 3.6826 & 1192  & 9.215e-3 \\
        3 & 1.474e-1 & 1.11e-2  & 1.85  & 1.016e-1 & 1.066 & 5,6,9 & 7.392e-4 & 3.8070 & 2366  & 4.725e-3 \\
        4 & 1.229e-1 & 7.212e-3 & 2.368 & 8.558e-2 & 0.943 & 5,6,9 & 1.016e-3 & 3.8260 & 3422  & 3.304e-3 \\
        5 & 6.705e-2 & 2.164e-3 & 1.987 & 4.469e-2 & 1.073 & 5,6,9 & 4.978e-4 & 3.8962 & 11238 & 1.029e-3 \\
        6 & 4.775e-2 & 1.093e-3 & 2.011 & 3.168e-2 & 1.013 & 5,6,9 & 1.828e-4 & 3.9359 & 21943 & 5.328e-4 \\
        \hline
    \end{tabular}
    \begin{tabular}{|@{}*{1}{>{\centering\arraybackslash}p{.025\textwidth}}@{}*{10}{>{\centering\arraybackslash}p{.097\textwidth}@{}}|}
        \hline
        \multicolumn{11}{|c|}{Coarse mesh family with $c_{depth} = 4$}\\
        \hline
        $\sharp$ & $h$& $err(p)$ & $\mathcal{O}(p)$ & $err({\Pi_0 \bm{u}})$ &
        $\mathcal{O}(\Pi_0 \bm{u})$ & $\sharp$ faces & $\min p$ & $\max p$ & size & sparsity \\
        \hline
        1 & 4.047e-1 & 2.075e-2 & -     &  8.634e-2 & -     & 17,20,25 & 3.824e-2 & 3.3361 & 758   & 3.960e-2 \\
        2 & 2.686e-1 & 9.821e-3 & 1.825 &  5.670e-2 & 1.026 & 13,19,24 & 9.755e-3 & 3.6202 & 1528  & 2.002e-2 \\
        3 & 1.973e-1 & 5.200e-3 & 2.06  &  3.255e-2 & 1.798 & 14,20,26 & 6.993e-3 & 3.6896 & 2792  & 1.168e-2 \\
        4 & 1.392e-1 & 2.525e-3 & 2.072 &  2.540e-2 & 0.711 & 13,20,26 & 2.891e-3 & 3.7927 & 5354  & 6.346e-3 \\
        5 & 9.758e-2 & 1.198e-3 & 2.099 &  1.569e-2 & 1.357 & 14,20,27 & 1.282e-3 & 3.8650 & 10537 & 3.283e-3 \\
        6 & 6.917e-2 & 6.154e-4 & 1.936 &  1.121e-2 & 0.976 & 13,20,26 & 6.152e-4 & 3.9082 & 20554 & 1.734e-3 \\
        \hline
    \end{tabular}
    \begin{tabular}{|@{}*{1}{>{\centering\arraybackslash}p{.025\textwidth}}@{}*{10}{>{\centering\arraybackslash}p{.097\textwidth}@{}}|}
        \hline
        \multicolumn{11}{|c|}{Coarse mesh family with $c_{depth} = 5$}\\
        \hline
        $\sharp$ & $h$& $err(p)$ & $\mathcal{O}(p)$ & $err({\Pi_0 \bm{u}})$ &
        $\mathcal{O}(\Pi_0 \bm{u})$ & $\sharp$ faces & $\min p$ & $\max p$ & size & sparsity \\
        \hline
        1 & 7.449e-1 & 4.609e-2 & -     & 2.371e-1 & -     & 29,38,48 & 9.882e-2 & 2.4769 & 520   & 9.531e-2  \\
        2 & 4.761e-1 & 2.827e-2 & 1.091 & 1.538e-1 & 0.967 & 28,36,44 & 6.912e-2 & 3.1075 & 890   & 5.795e-2  \\
        3 & 3.429e-1 & 1.313e-2 & 2.336 & 8.447e-2 & 1.825 & 18,35,49 & 2.227e-2 & 3.4482 & 1644  & 3.221e-2  \\
        4 & 2.515e-1 & 7.052e-3 & 2.005 & 5.368e-2 & 1.462 & 25,37,47 & 2.055e-2 & 3.5211 & 2942  & 2.003e-2  \\
        5 & 1.773e-1 & 3.138e-3 & 2.316 & 3.355e-2 & 1.345 & 21,37,49 & 9.645e-3 & 3.6361 & 5725  & 1.056e-2  \\
        6 & 1.27e-1  & 1.630e-3 & 1.962 & 2.577e-2 & 0.79  & 24,37,44 & 2.642e-3 & 3.8138 & 10908 & 5.841e-3  \\
        \hline
    \end{tabular}
    \caption{For each table we report the values for the discretization ($h$),
    errors ($err(p)$ and $err({\Pi_0 \bm{u}})$) and order of convergence
    ($\mathcal{O}(p)$ and $\mathcal{O}(\Pi_0 \bm{u})$) for the example in
    \cref{subsubsec:two}.  The last columns are devoted to the number of faces
    for each cell (minimum, average, and maximum, respectively), the minimum and
    maximum principle, number of rows of the matrix and sparsity.  For the name
    used in each table, consider the terminology reported in the aforementioned
    subsection.}%
    \label{tag:errors_second}
\end{table}


\begin{table}[tbp] \footnotesize
    \begin{tabular}{|@{}*{1}{>{\centering\arraybackslash}p{.025\textwidth}}@{}*{10}{>{\centering\arraybackslash}p{.097\textwidth}@{}}|}
        \hline
        \multicolumn{11}{|c|}{Triangular mesh family}\\
        \hline
        $\sharp$ & $h$& $err(p)$ & $\mathcal{O}(p)$ & $err({\Pi_0 \bm{u}})$ &
        $\mathcal{O}(\Pi_0 \bm{u})$ & $\sharp$ faces & $\min p$ & $\max p$ & size & sparsity \\
        \hline
        1 & 2.261e-1 & 5.494e-2 & -    & 3.112e-1 & -     & 3,3,3 & 6.051e-3 & 9.139e-1 & 655    & 7.871e-3 \\
        2 & 1.058e-1 & 1.249e-2 & 1.95 & 1.497e-1 & 0.963 & 3,3,3 & 5.141e-4 & 9.606e-1 & 3031   & 1.746e-3 \\
        3 & 6.163e-2 & 3.807e-3 & 2.2  & 8.795e-2 & 0.985 & 3,3,3 & 3.632e-4 & 9.723e-1 & 8935   & 5.974e-4 \\
        4 & 3.382e-2 & 1.082e-3 & 2.1  & 4.784e-2 & 1.01  & 3,3,3 & 9.558e-5 & 9.809e-1 & 29691  & 1.807e-4 \\
        5 & 1.695e-2 & 2.722e-4 & 2    & 2.434e-2 & 0.978 & 3,3,3 & 3.867e-5 & 9.913e-1 & 117311 & 4.588e-5 \\
        \hline
    \end{tabular}
    \begin{tabular}{|@{}*{1}{>{\centering\arraybackslash}p{.025\textwidth}}@{}*{10}{>{\centering\arraybackslash}p{.097\textwidth}@{}}|}
        \hline
        \multicolumn{11}{|c|}{Coarse mesh family with $c_{depth} = 2$}\\
        \hline
        $\sharp$ & $h$& $err(p)$ & $\mathcal{O}(p)$ & $err({\Pi_0 \bm{u}})$ &
        $\mathcal{O}(\Pi_0 \bm{u})$ & $\sharp$ faces & $\min p$ & $\max p$ & size & sparsity \\
        \hline
        1 & 4.586e-1 & 9.705e-2 & -    & 2.802e-1 & -     & 5,6,8  & 4.628e-3 & 8.183e-1 & 277    & 3.499e-2 \\
        2 & 2.070e-1 & 2.211e-2 & 1.86 & 1.393e-1 & 0.88  & 5,6,9  & 3.253e-3 & 9.152e-1 & 1209   & 9.036e-3 \\
        3 & 1.058e-1 & 5.478e-3 & 2.08 & 6.718e-2 & 1.09  & 5,6,9  & 3.466e-4 & 9.638e-1 & 4573   & 2.466e-3 \\
        4 & 5.461e-2 & 1.246e-3 & 2.24 & 3.519e-2 & 0.977 & 5,6,9  & 5.242e-4 & 9.777e-1 & 16761  & 6.928e-4 \\
        5 & 3.033e-2 & 4.337e-4 & 1.79 & 1.928e-2 & 1.02  & 5,6,10 & 5.833e-5 & 9.889e-1 & 54617  & 2.147e-4 \\
        \hline
    \end{tabular}
    \begin{tabular}{|@{}*{1}{>{\centering\arraybackslash}p{.025\textwidth}}@{}*{10}{>{\centering\arraybackslash}p{.097\textwidth}@{}}|}
        \hline
        \multicolumn{11}{|c|}{Coarse mesh family with $c_{depth} = 4$}\\
        \hline
        $\sharp$ & $h$& $err(p)$ & $\mathcal{O}(p)$ & $err({\Pi_0 \bm{u}})$ &
        $\mathcal{O}(\Pi_0 \bm{u})$ & $\sharp$ faces & $\min p$ & $\max p$ & size & sparsity \\
        \hline
        1 & 2.686e-1 & 1.249e-2 & -    &  6.404e-2 & -    & 13,19,24 & 9.662e-3 & 8.496e-1 & 1561  &  1.927e-2\\
        2 & 1.253e-1 & 2.592e-3 & 2.06 &  2.509e-2 & 1.23 & 14,20,27 & 3.294e-3 & 9.399e-1 & 6649  &  5.112e-3\\
        3 & 8.399e-2 & 1.107e-3 & 2.13 &  1.489e-2 & 1.31 & 14,20,30 & 7.368e-4 & 9.630e-1 & 14385 &  2.438e-3\\
        4 & 6.012e-2 & 5.624e-4 & 2.02 &  9.809e-3 & 1.25 & 12,20,17 & 7.374e-4 & 9.797e-1 & 27537 &  1.294e-3\\
        5 & 4.013e-2 & 2.352e-4 & 2.16 &  6.26e-3  & 1.11 & 13,20,27 & 1.825e-4 & 9.831e-1 & 60851 &  5.944e-4\\
        \hline
    \end{tabular}
    \caption{For each table we report the values for the discretization ($h$),
    errors ($err(p)$ and $err({\Pi_0 \bm{u}})$) and order of convergence
    ($\mathcal{O}(p)$ and $\mathcal{O}(\Pi_0 \bm{u})$) for the example in
    \cref{subsubsec:two_inters}.  The last columns are devoted to the number of faces
    for each cell (minimum, average, and maximum, respectively), the minimum and
    maximum principle, number of rows of the matrix and sparsity.  For the name
    used in each table, consider the terminology reported in the aforementioned
    subsection.}%
    \label{tag:errors_third}
\end{table}

\begin{table}[tbp] \footnotesize
    \centering
    \begin{tabular}{|@{}*{1}{>{\centering\arraybackslash}p{.025\textwidth}}@{}*{7}{>{\centering\arraybackslash}p{.097\textwidth}@{}}|}
        \hline
        \multicolumn{8}{|c|}{Triangular mesh family}\\
        \hline
        $\sharp$ & $h$& $err(\hat{p})$ & $\mathcal{O}(\hat{p})$ &
        $err({\hat{\Pi}_0 \hat{\bm{u}}})$ &
        $\mathcal{O}(\hat{\Pi}_0 \hat{\bm{u}})$ & $\min \hat{p}$ & $\max \hat{p}
        $ \\
        \hline
        1 & 1.250e-1 & 4.665e-2 & -    & 1.793e-2 & -    & 3.423e-1 & 1.269 \\
        2 & 6.250e-2 & 1.158e-2 & 2.01 & 5.010e-3 & 1.84 & 1.647e-1 & 1.254 \\
        3 & 3.125e-2 & 3.229e-3 & 1.84 & 9.500e-4 & 2.4  & 8.042e-2 & 1.252 \\
        4 & 1.562e-2 & 8.601e-4 & 1.91 & 1.812e-4 & 2.39 & 3.966e-2 & 1.250 \\
        5 & 7.812e-3 & 2.187e-4 & 1.97 & 4.028e-5 & 2.17 & 1.968e-2 & 1.250 \\
        \hline
    \end{tabular}
    \begin{tabular}{|@{}*{1}{>{\centering\arraybackslash}p{.025\textwidth}}@{}*{7}{>{\centering\arraybackslash}p{.097\textwidth}@{}}|}
        \hline
        \multicolumn{8}{|c|}{Coarse mesh family with $c_{depth} = 2$}\\
        \hline
        $\sharp$ & $h$& $err(\hat{p})$ & $\mathcal{O}(\hat{p})$ &
        $err({\hat{\Pi}_0 \hat{\bm{u}}})$ &
        $\mathcal{O}(\hat{\Pi}_0 \hat{\bm{u}})$ & $\min \hat{p}$ & $\max \hat{p}$ \\
        \hline
        1 & 1.250e-1 & 7.198e-2 & -    & 8.557e-3 & -     & 3.524e-1 & 1.303 \\
        2 & 6.250e-2 & 1.769e-2 &  2.02& 2.143e-3 & 2     & 1.658e-1 & 1.262 \\
        3 & 3.125e-2 & 4.511e-3 &  1.97& 7.088e-4 & 1.6   & 8.061e-2 & 1.253 \\
        4 & 1.538e-2 & 1.246e-3 &  1.82& 3.032e-4 & 1.2   & 3.967e-2 & 1.251 \\
        5 & 7.812e-3 & 3.541e-4 &  1.86& 1.124e-4 & 1.47  & 1.968e-2 & 1.250 \\
        \hline
    \end{tabular}
    \begin{tabular}{|@{}*{1}{>{\centering\arraybackslash}p{.025\textwidth}}@{}*{7}{>{\centering\arraybackslash}p{.097\textwidth}@{}}|}
        \hline
        \multicolumn{8}{|c|}{Coarse mesh family with $c_{depth} = 4$}\\
        \hline
        $\sharp$ & $h$& $err(\hat{p})$ & $\mathcal{O}(\hat{p})$ &
        $err({\hat{\Pi}_0 \hat{\bm{u}}})$ &
        $\mathcal{O}(\hat{\Pi}_0 \hat{\bm{u}})$ & $\min \hat{p}$ & $\max \hat{p}$ \\
        \hline
        1 & 3.125e-2 & 2.717e-3 & -    &  5.785e-3 & -     & 8.066e-2 & 1.246 \\
        2 & 1.408e-2 & 9.627e-4 & 1.3  &  1.703e-3 & 1.53  & 3.965e-2 & 1.248 \\
        3 & 7.812e-3 & 3.718e-4 & 1.61 &  5.831e-4 & 1.82  & 1.968e-2 & 1.249 \\
        4 & 6.135e-3 & 2.059e-4 & 2.45 &  3.448e-4 & 2.17  & 1.968e-2 & 1.25  \\
        5 & 3.906e-3 & 8.863e-5 & 1.87 &  1.411e-4 & 1.98  & 9.803e-3 & 1.25  \\
        \hline
    \end{tabular}
    \caption{For each table we report the values for the discretization ($h$),
    errors ($err(\hat{p})$ and $err({\hat{\Pi}_0 \hat{\bm{u}}})$) and order of convergence
    ($\mathcal{O}(\hat{p})$ and $\mathcal{O}(\hat{\Pi}_0 \hat{\bm{u}})$) for the example in
    \cref{subsubsec:two_inters}. The mesh size is
    referred to the intersection mesh. For the name
    used in each table, consider the terminology reported in the aforementioned
    subsection.}%
    \label{tag:errors_thirdInters}
\end{table}



\end{document}